\documentclass[reqno, a4paper]{amsart}
\usepackage{amssymb, amscd, amsfonts,  amsthm}

\usepackage[mathscr]{eucal}
 
 \usepackage{tikz}
\usepackage{a4wide}

\newtheorem{theorem}{Theorem}[section]
\newtheorem{lemma}[theorem]{Lemma}
\newtheorem{corollary}[theorem]{Corollary}
\newtheorem{proposition}[theorem]{Proposition}

\theoremstyle{definition}
\newtheorem{definition}[theorem]{Definition}
\newtheorem{remark}[theorem]{Remark}

\newtheorem{example}[theorem]{Example}

\newcommand{\restrict}{\,{\mathbin{\vert\mkern-0.3mu\grave{}}}\,}

\newcommand{\luk}{\L u\-ka\-s\-ie\-wicz}

\newcommand{\remove}[1]{}

\newcommand{\MV}{\mathsf{MV}}
\newcommand{\IMV}{\mathsf{IMV}}
\newcommand{\class}[1]{\mathsf{#1}}
\newcommand{\F}{\mathbf{F}}

\DeclareMathOperator{\McN}{\mathcal M}

\DeclareMathOperator{\I}{[0,1]}

\DeclareMathOperator{\iii}{\rm i}
\DeclareMathOperator{\too}{\Rightarrow}

 \title[Interval MV-algebras and generalizations]
{Interval  
 MV-algebras and
generalizations}

\author{\sc Leonardo Manuel Cabrer and Daniele Mundici}
\thanks{This research was supported by a 
 Marie Curie Intra European Fellowship 
 within the 7th European Community Framework 
 Program (ref. 299401-FP7-PEOPLE-2011-IEF)}

\address[L.M. Cabrer]{Department of Statistics,
Computer Science and Applications,  ``Giu\-sep\-pe Parenti''\\ 
University of Florence\\
Viale Morgagni 59 
50134\\ Florence \\
Italy}
\email{ l.cabrer@disia.unifi.it }

\address[D. Mundici]{Department of
Mathematics and Computer Science  ``Ulisse Dini'' \\
University of Florence\\
Viale Morgagni 67/A \\
I-50134 Florence \\
Italy}
\email{ mundici@math.unifi.it }

\date{\today}

\begin{document}

\thanks{2000 {\it Mathematics Subject Classification.}
Primary:  68T37;  06D35   Secondary:    03B47;  03B50;  03B52;  06F05; 18A22; 18C05}
\keywords{MV-algebra, approximate reasoning, interval MV-algebra, Minkowski sum, 
intervals as truth-values, categorical equivalence, 
triangle algebra, interval constructor,  interval logic,
residuated lattice, BL-algebra, triangularization, interval functor, 
interval \luk\ logic,  t-norm,
interval t-norm, interval \luk\  implication,
 modal operator, monadic algebra}

   \begin{abstract}
For any MV-algebra $A$ we equip the set $I(A)$ of intervals
in  $A$ with
pointwise \luk\ negation $\neg x=\{\neg \alpha\mid \alpha\in x\}$,
 (truncated)  Minkowski sum, 
$x\oplus y=\{\alpha\oplus \beta\mid \alpha \in x,\,\,\beta\in y\}$,
pointwise \luk\ conjunction
$x\odot y=\neg(\neg x\oplus \neg y)$,
  the  operators $\Delta x=[\min x,\min x]$, 
$\nabla x=[\max x,\max x]$, and distinguished constants
$0=[0,0],\,\, 1=[1,1],\,\,\, \iii = A.$ We list a few equations
satisfied by the  algebra
$\mathcal I(A)=(I(A),0,1,\iii,\neg,\Delta,\nabla,\oplus,\odot)$,
 call {\it IMV-algebra} every model of
these equations,
 and show that, conversely,  every IMV-algebra
  is isomorphic to the IMV-algebra $\mathcal I(B)$
of all intervals in  some MV-algebra $B$. 
We show that IMV-algebras are categorically
equivalent to MV-algebras, and give a representation of
free IMV-algebras. 
We construct   {\it \luk\  interval logic}, with its
coNP-complete consequence relation,
which we prove to be complete 
for $\mathcal I(\I)$-valuations. 
For any class $\class{Q}$ of partially
ordered algebras with operations that are
monotone or antimonotone in each varlable,
we  consider the 
generalization   $\mathcal I_{\class{Q}}$  of 
the  MV-algebraic functor $\mathcal I$, and give
 necessary and sufficient conditions
for  $\mathcal I_{\class{Q}}$  to be a categorical equivalence.
These conditions are  satisfied, e.g., by all subquasivarieties of residuated lattices.  
 \end{abstract}

 \maketitle
 
 \bigskip
 
  \bigskip
 
\section{Foreword}
As shown in \cite[\S 1.6]{mun11}, 
truth-values  in 
   \luk\ logic may be thought of as arising
   from normalized  measurements
   of bounded physical observables, just as
   boolean truth-values arise from $\{\mbox{yes,no}\}$-observables. 
   \luk\ implication   is uniquely characterized
   among all  binary operations on $\I$ 
   by the Smets-Magrez theorem,   \cite{bac, smemag},
   as the only $\I$-valued continuous
   map on  $\I^2$ satisfying
  natural monotonicity conditions with respect
   to the natural order of $\I$. These conditions yield the classical
   \luk\ axioms of infinite-valued logic \L$_\infty$
   which, via Modus Ponens,
   determine the consequence relation of   \L$_\infty$.
   Closing a circle of ideas about truth-values as
   real numbers, we recover the intended meaning
   of  \L$_\infty$-formulas, by a well known completeness theorem
   \cite{rosros}, \cite{cha2}, \cite[\S 2.5]{cigdotmun}
  to the effect that   \L$_\infty$-tautologies (i.e.,   \L$_\infty$-consequences
   of the \luk\ axioms) coincide with  formulas
taking value 1 for every  $\I$-valuation.

To achieve greater adherence to
actual physical measurements---and more generally, to formally handle
the imprecise estimations/evaluations of everyday life---one
 might envisage logics whose truth-values are
the closed intervals in  $\I$.
This paper is devoted to developing the  algebraic and categorical tools 
for the construction of such logics, using \luk\ logic as a template. 
Mimicking the approach to \luk\ logic via MV-algebras,
 the set $I(A)$ of intervals
 in any MV-algebra $A$ is equipped with
pointwise \luk\ negation $\neg x=\{\neg \alpha\mid \alpha\in x\}$,
 (truncated) {\it Minkowski sum}, 
$x\oplus y=\{\alpha\oplus \beta\mid \alpha \in x,\beta\in y\}$,
pointwise \luk\ conjunction
$x\odot y=\neg(\neg x\oplus \neg y)$,
  the  operators $\Delta x=[\min x,\min x]$, 
$\nabla x=[\max x,\max x]$, and distinguished constants
$0=[0,0],\,\, 1=[1,1],\,\,\, \iii = A.$

We list nineteen simple equations   \eqref{equation:associative}-\eqref{equation:great}
satisfied by the  algebra
\[\mathcal I(A)=(I(A),0,1,\iii,\neg,\Delta,\nabla,\oplus,\odot),\]
 and call {\it IMV-algebra} every model of
these equations.
The adequacy of these equations is shown
in Theorem \ref{theorem:representation}
stating that, conversely,  every IMV-algebra
  is isomorphic to the IMV-algebra $\mathcal I(B)$
of all intervals in  some MV-algebra $B$. 
While no IMV-algebra reduct is an MV-algebra,
the functor $\mathcal I$ establishes a categorical equivalence 
between MV-algebras and
  IMV-algebras  (Theorem \ref{theorem:equivalence}).

In  Section \ref{section:completeness}
 we   obtain the  following equational completeness theorem
(Theorem \ref{theorem:hsp}):
an equation
is satisfied by all IMV-algebras iff it is satisfied by the
IMV algebra  $\mathcal I(\I)$ of all intervals in  the standard
MV-algebra $\I$  iff it is derivable from
the IMV-axioms  \eqref{equation:associative}-\eqref{equation:great}
by the familiar  rules of replacing equals by equals according to
Birkhoff equational logic. 
This result has a deeper algebraic counterpart in 
the representation Theorem \ref{theorem:free} 
of free  IMV-algebras.

Every IMV-algebra $J$ is equipped  with
{\it two} types of partial orders:
the product order and the inclusion order. The first endows $J$
with a distributive lattice structure  $(J,\sqcup,\sqcap)$; the second yields an upper
semilattice structure  $(J,\cup)$.  
In the final part of
Section \ref{section:completeness} is it proved  that each operation
$\sqcup,\sqcap,\cup$ is definable from the IMV-structure.
With reference to our initial remarks on truth-values as intervals,
 IMV-algebras
can express the fundamental inclusion order  $u\subseteq v$
between actual measurements/estimations $u,v$, 
meaning that $u$ is more precise than $v$.  Within
the MV-algebraic framework, the inclusion order has no meaning,
despite MV-algebras are
categorically equivalent to IMV-algebras.

Closing another circle of ideas, about truth-values as
intervals, 
in Section \ref{section:logic} we will introduce 
{\it \luk\ interval logic}, with its consequence relation based on
the only rule of Modus Ponens, and prove
a completeness theorem for $\mathcal I(\I)$-valuations.
The  consequence problem in
this logic, just like  
  the equational theory
of IMV-algebras turns out to be coNP-complete
 (Theorem \ref{theorem:equational}, 
Corollary  \ref{corollary:conp}). 
Thus, the increased expressive power
of \luk\ interval logic with respect to
\luk\   logic does not entail greater
 complexity of the consequence problem.

 In the all-important
 Section \ref{section:general},    for any class $\class{Q}$ of partially
ordered algebras with operations that are
monotone or antimonotone in each variable,
we  consider the 
generalization   $\mathcal I_{\class{Q}}$  of 
the  MV-algebraic functor $\mathcal I$, and give
 necessary and sufficient conditions
for  $\mathcal I_{\class{Q}}$  
to be a categorical equivalence.
As shown in   Corollary \ref{corollary:extensive}, these conditions
 are satisfied, e.g.,  by every
quasivariety  $\class Q$ having a
lattice reduct---including many
 classes of  ordered algebras related with logical systems of general interest,
 such as BL-algebras, Heyting algebras,  
 G\"odel algebras,  MTL-algebras, and more generally
 every subquasivariety of residuated lattices  (Corollary \ref{corollary:listone}).

 Remarkably enough,  the pervasiveness of  the categorical equivalence 
$\mathcal I_{\class{Q}}$  has gone virtually unnoticed in the  literature on
interval and triangle algebras and their logics, interval constructors
and triangularizations.
  The final Section \ref{section:literature}   is devoted
to relating our results to the extensive  literature on this subject,
 \cite{bedbed,natal-LNCS,information,35years,cina,van,van-bis}.
 
The  only  prerequisite for this paper
is some acquaintance
 with MV-algebras \cite{cigdotmun}, Birkhoff-style universal algebra
\cite{bursan}, 
and the rudiments of category theory \cite{mcl}.

  \bigskip
 %%%%%%%%%%%%%%%%%%%%%%%%%%%%%%   
\section{The equational class of IMV-algebras}\label{Sec:IMV}
%%%%%%%%%%%%%%%%%%%%%%%%%%%%%
Let  $A=(A,0,1,\neg,\oplus,\odot)$ be an MV-algebra.
By an {\it interval} of $A$
we mean a subset $x$ of $A$ of the form
$x=[\alpha,\beta]=\{\xi\in A\mid \alpha\leq \xi\leq \beta\}$,
where $\alpha,\beta\in A$ and $\alpha\leq \beta$.
In case  $\alpha=\beta$ we say that $x$ is
{\it degenerate}.
 We let    $I(A)$  denote
the set of intervals
in  $A$, and $D(A)\subseteq I(A)$  the set of degenerate
intervals.
We record here a first result, to the effect that
``sums of intervals are intervals'':
 
 \bigskip 
  \begin{proposition}
  \label{proposition:riesz} For any MV-algebra $A$
  and intervals  $H,K \in I(A)$, let  
  $H\oplus K$ denote the (truncated) {\em Minkowski sum} of $H$ and $K$, 
  $$H\oplus K=\{\alpha\in A\mid \alpha=\mu\oplus \nu
  \mbox{ for some $\mu\in H$ and $\nu\in K$}\}.$$
Then $H\oplus K$ is an interval in  $A$.
 Similarly,
the set $$H\odot K=\{\alpha\in A\mid \alpha=\mu\odot \nu
  \mbox{ for some $\mu\in H$ and $\nu\in K$}\}$$
   is an interval in  $A$.
 \end{proposition}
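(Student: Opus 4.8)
The plan is to prove the sharper statement that $H\oplus K$ is exactly the interval $[\alpha\oplus\gamma,\,\beta\oplus\delta]$, where $H=[\alpha,\beta]$ and $K=[\gamma,\delta]$. One inclusion is immediate: since $\oplus$ is isotone in each argument, any $\mu\in[\alpha,\beta]$ and $\nu\in[\gamma,\delta]$ satisfy $\alpha\oplus\gamma\leq\mu\oplus\nu\leq\beta\oplus\delta$, so that $H\oplus K\subseteq[\alpha\oplus\gamma,\,\beta\oplus\delta]$. The entire content of the proposition lies in the reverse inclusion, that is, in showing that \emph{every} element $\tau$ with $\alpha\oplus\gamma\leq\tau\leq\beta\oplus\delta$ is a sum $\mu\oplus\nu$ with $\mu\in H$ and $\nu\in K$. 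This is a Riesz-type decomposition, and it is the main obstacle.

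To carry it out I would pass to lattice-ordered groups via Mundici's $\Gamma$ functor, writing $A=\Gamma(G,u)=[0,u]$ for a unital $\ell$-group $(G,u)$, so that $\oplus$ becomes truncated addition $\mu\oplus\nu=(\mu+\nu)\wedge u$ and the MV-order is the restriction of the group order. As $\alpha,\beta,\gamma,\delta\in[0,u]$, the MV-intervals $H,K$ coincide with the corresponding order-intervals of $G$. The argument then splits into two clean steps. First, a decomposition inside $G$: given $z$ in the group-interval $[\alpha+\gamma,\,\beta+\delta]$, the elements $\mu:=\alpha\vee(z-\delta)$ and $\nu:=\delta\wedge(z-\alpha)$ satisfy $\mu+\nu=z$, with $\mu\in[\alpha,\beta]$ and $\nu\in[\gamma,\delta]$; here $z\geq\alpha+\gamma$ yields $\mu\geq\alpha$ and $\nu\geq\gamma$, while $z\leq\beta+\delta$ yields $\mu\leq\beta$ and $\nu\leq\delta$. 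Thus $[\alpha,\beta]+[\gamma,\delta]=[\alpha+\gamma,\,\beta+\delta]$ as subsets of $G$.

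Second, a truncation lemma: for $p\leq q$ in $G$ one has $\{w\wedge u\mid p\leq w\leq q\}=[p\wedge u,\,q\wedge u]$, the nontrivial inclusion being witnessed, for $t$ in the right-hand interval, by $w:=t\vee p$, which lies in $[p,q]$ and satisfies $w\wedge u=t$ because $t\leq u$. Combining the two steps with $p=\alpha+\gamma$ and $q=\beta+\delta$ produces, for any $\tau\in[\alpha\oplus\gamma,\,\beta\oplus\delta]$, a group element $w=\tau\vee(\alpha+\gamma)\in[\alpha+\gamma,\,\beta+\delta]$, hence a factorization $w=\mu+\nu$ with $\mu\in H$ and $\nu\in K$, whence $\tau=w\wedge u=\mu\oplus\nu\in H\oplus K$. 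This establishes $H\oplus K=[\alpha\oplus\gamma,\,\beta\oplus\delta]$.

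The statement for $\odot$ then requires no new work: from the De Morgan law $\mu\odot\nu=\neg(\neg\mu\oplus\neg\nu)$ one gets $H\odot K=\neg\big((\neg H)\oplus(\neg K)\big)$, and since the pointwise negation of an interval $[\alpha,\beta]$ is again an interval $[\neg\beta,\neg\alpha]$, the $\oplus$-case just proved shows that $(\neg H)\oplus(\neg K)$ is an interval and hence so is its negation, giving $H\odot K=[\alpha\odot\gamma,\,\beta\odot\delta]$. The only delicate point in the whole argument is the interaction of the decomposition with truncation, which is exactly why I would truncate only once, at the very end, rather than work with $\oplus$ inside $G$ throughout; a purely MV-algebraic route through the subdirect representation of $A$ by totally ordered MV-algebras is also possible, but it requires a single uniform decomposition term valid in every chain, which makes the $\Gamma$-reduction the cleaner option.
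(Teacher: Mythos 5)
Your proof is correct, but it takes a genuinely different route from the paper's. The paper argues entirely inside the MV-algebra: it first proves the special case $[0,\alpha]\oplus[0,\beta]=[0,\alpha\oplus\beta]$ by exhibiting explicit term witnesses (for $x\leq\alpha\oplus\beta$, take $\beta'=\beta\wedge x$ and $\alpha'=x\odot\neg\beta'$, then verify $\alpha'\oplus\beta'=x$, $\alpha'\leq\alpha$, $\beta'\leq\beta$ using only the MV axioms and the derived lattice operations), and then bootstraps to $[\delta,\alpha]\oplus[\theta,\beta]=[\delta\oplus\theta,\alpha\oplus\beta]$; the $\Gamma$-functor and the Riesz decomposition property of the enveloping unital $\ell$-group are mentioned only in a closing parenthetical remark as an alternative viewpoint. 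You instead take that remark as the whole proof: you pass to $(G,u)$ with $A=\Gamma(G,u)$, do the decomposition in the group with $\mu=\alpha\vee(z-\delta)$, $\nu=\delta\wedge(z-\alpha)$ (this identity is correct, since $z-\nu=(z-\delta)\vee\alpha=\mu$, and all four bounds check out, although your attribution of which hypothesis yields which bound is slightly off: $\mu\geq\alpha$ and $\nu\leq\delta$ are automatic from the definitions, while $z\geq\alpha+\gamma$ gives $\nu\geq\gamma$ and $z\leq\beta+\delta$ gives $\mu\leq\beta$), and then handle truncation by a separate lemma whose verification uses distributivity of the $\ell$-group lattice. Your isolation of the truncation step is the genuinely delicate point and you treat it correctly; what your approach buys is conceptual transparency (the statement really \emph{is} Riesz decomposition plus truncation), at the cost of invoking the nontrivial categorical equivalence $\Gamma$ of \cite{mun86} as a prerequisite, whereas the paper's computation is elementary and self-contained within the MV-algebraic axioms. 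The treatment of $\odot$ via De Morgan duality is identical in both.
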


 \begin{proof}  We first prove the following special case:
 \begin{equation}
\label{equation:riesz}
[0,\alpha]\oplus[0, \beta]=[0,\alpha\oplus \beta]
\,\,\mbox{  for all \,\,$\alpha,\beta\in A$}.
\end{equation}
We will make use of the derived lattice operations  $\wedge,\vee$ 
and the natural order $\leq$ of $A$, \cite[\S 1.1]{cigdotmun}.
Given  $x\in [0,\alpha\oplus \beta]$  let
$\beta'=\beta\wedge x$  and $\alpha'=x\odot\neg \beta'.$
Then
$$
\alpha'\oplus\beta'=(x\odot\neg\beta')\oplus\beta'=\neg(\neg x\oplus\beta')\oplus\beta'
= x\vee \beta'=x.
$$
Further,
$
\alpha'=x\odot\neg\beta'=x\odot\neg(x\wedge \beta)=x\odot(\neg x\vee \neg \beta)
= (x\odot\neg x) \vee (x\odot\neg\beta)=x\odot\neg\beta.
$
 Now from
 $
 \alpha'\odot\neg\alpha=(x\odot\neg\beta)\odot\neg\alpha
 =x\odot\neg(\beta\oplus\alpha)\leq x\odot\neg x=0
 $
 we get $\alpha'\leq \alpha.$ Since  $\beta'\leq\beta,$
 \eqref{equation:riesz} is settled.
 One now easily proves $[\alpha,\alpha]\oplus[0, \beta]
 =[\alpha,\alpha\oplus \beta]$, and more generally,
 $[\alpha,\alpha]\oplus[\delta, \beta]
 =[\alpha\oplus\delta,\alpha\oplus \beta]$.
A final verification using all these preliminary results
yields the desired conclusion $[\delta,\alpha]\oplus[\theta,\beta]
=[\delta\oplus\theta,\alpha\oplus\beta]$.
Since in every MV-algebra  $\alpha\odot\beta=\neg(\neg \alpha \oplus
\neg\beta)$, one immediately verifies that $H\odot K$ is an interval in  $A$.
(Readers familiar with the categorical equivalence
$\Gamma$  between MV-algebras and unital
abelian $\ell$-groups  (\cite[\S 3]{mun86}) will observe that 
\eqref{equation:riesz} is a special case of the
Riesz decomposition property
(\cite[Lemma 1, page 310, and Theorem 49, p. 328]{bir})
 of the unital
abelian  $\ell$-group   $(G,u)$ defined by
$\Gamma(G,u)=A.$)
 \end{proof}

In view of the foregoing result, for any MV-algebra $A$,
the set $I(A) $ is
made  into an algebra
$\mathcal I(A)=(I(A),
 0,1,\iii, \neg,\Delta,\nabla, \oplus,\odot )$ of type
  $(0,0,0,1,1,1,2,2)$, 
by equipping it with the
 distinguished constants
 \begin{equation}
\label{constants}
0=[0,0],\,\,\,1=[1,1],\,\,\,\iii=A ,
\end{equation} 
the   operations
\begin{eqnarray}
\label{negation}
{\neg} x &=&\{\neg \alpha\mid \alpha \in x\},\,\,  \mbox{pointwise negation}\\
\label{oplus}
x \oplus y&=&\{\alpha\oplus \beta
  \mid \alpha \in x, \beta \in y\}, \mbox{ (truncated) Minkowski  sum}\\
  \label{odot}
  x\odot y &=&\{\alpha\odot \beta
  \mid \alpha \in x, \beta \in y\}, \mbox{ pointwise \luk\ conjunction},
  \end{eqnarray}
  and the lower/upper collapse operations
    \begin{eqnarray} 
  \label{delta}
  \Delta x &=&[\min x,\min x], \\
\label{nabla}
\nabla x &=& [\max x,\max x],
\end{eqnarray}
where the min (resp., the max) of an interval
$x=[\alpha,\beta]\in I(A)$  equals
 $\alpha$ 
(resp., equals $\beta$), according to the
 natural order $\leq$ of $A$.
 From the context it will always be clear whether the constants
 $0,1$ and the operations
 $\neg,\oplus,\odot$ are those of $\mathcal I(A)$ or those of $A$.

Form the definition of $\mathcal{I}(A)$, the set
$D(A)\subseteq I(A)$ is the universe of a subalgebra  $\mathcal{D}(A)$  of the MV-algebra reduct of $\mathcal{I}(A)$. 
Clearly,   the map $\iota_A\colon \alpha\in A\mapsto
[\alpha,\alpha]\in D(A)$ becomes an isomorphism of $A$
onto $\mathcal D(A),$  in symbols,   
\begin{equation}
\label{equation:iota}
\iota_A\colon A\cong \mathcal D(A)\,.
\end{equation}

\bigskip

 \begin{proposition}
 \label{proposition:contemplation}
 The algebra 
 $\mathcal I(A)$
 satisfies the following equations:
\begin{eqnarray}
\label{equation:associative}
x\oplus(y\oplus z)&=&(x\oplus y)\oplus z\\
\label{equation:commutative}
x\oplus y&=&y\oplus x\\
\label{equation:opluszero}
x\oplus 0&=&x\\
\label{equation:coneutral}
x\oplus \neg 0&=&\neg 0\\
\label{equation:negneg}
\neg\neg x&=&x\\
\label{equation:mangani}
\neg(\neg\Delta x\oplus \Delta y)\oplus \Delta y&=&
\neg(\neg\Delta y\oplus \Delta x)\oplus \Delta x\\
\label{equation:odot}
x\odot y &=& \neg(\neg x\oplus \neg y)\\
\label{equation:one}
1&=&\neg 0\\
\label{equation:nabla}
\nabla x&=&\neg\Delta\neg x\\
\label{equation:negi}
\neg \iii&=&\iii\\
\label{equation:deltazero}
\Delta 0&=&0\\
\label{equation:deltaone}
\Delta 1&=&1\\
\label{equation:deltai}
\Delta \iii&=&0\\
\label{equation:deltadelta}
\Delta\Delta x&=&\Delta x\\
\label{equation:deltanabla}
\Delta\nabla x&=&\nabla x\\
\label{equation:deltaoplus}
\Delta(x\oplus y)&=&\Delta x\oplus \Delta y\\
\label{equation:deltaodot}
\Delta(x\odot y)&=&\Delta x\odot \Delta y\\
\label{equation:order}
\Delta x \odot  \neg\nabla  x &=& 0\\
\label{equation:great}
\Delta x \oplus (\iii\odot \nabla x\odot  \neg\Delta x) &=&x .
\end{eqnarray}
\end{proposition}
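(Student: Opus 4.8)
The plan is to compute both sides of each equation via explicit endpoint formulas and then reduce every identity to a pair of identities in the underlying MV-algebra $A$. First I would record how the operations act on endpoints. Writing a generic interval as $x=[\alpha,\beta]$ with $\alpha\leq\beta$, since $\neg$ is an order-reversing involution of $A$ one has $\neg[\alpha,\beta]=[\neg\beta,\neg\alpha]$; the proof of Proposition \ref{proposition:riesz} already yields $[\alpha,\beta]\oplus[\gamma,\delta]=[\alpha\oplus\gamma,\beta\oplus\delta]$, whence by taking \eqref{equation:odot} as the definition of $\odot$ on intervals, together with De Morgan in $A$, one gets $[\alpha,\beta]\odot[\gamma,\delta]=[\alpha\odot\gamma,\beta\odot\delta]$. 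Finally $\Delta[\alpha,\beta]=[\alpha,\alpha]$ and $\nabla[\alpha,\beta]=[\beta,\beta]$ by definition, and the constants are $0=[0,0]$, $1=[1,1]$, $\iii=[0,1]$. Since two intervals coincide exactly when their left endpoints and their right endpoints coincide, each of the nineteen equations splits into two endpoint equations to be checked in $A$.

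With these formulas in hand, most of the equations are immediate. Equations \eqref{equation:associative}, \eqref{equation:commutative}, \eqref{equation:opluszero} and \eqref{equation:coneutral} reduce, on each endpoint, to the MV-algebra identities $\xi\oplus(\eta\oplus\zeta)=(\xi\oplus\eta)\oplus\zeta$, $\xi\oplus\eta=\eta\oplus\xi$, $\xi\oplus 0=\xi$, and $\xi\oplus 1=1$. Equation \eqref{equation:negneg} is pointwise $\neg\neg\xi=\xi$, while \eqref{equation:odot} holds by the very definition above. The constant equations \eqref{equation:one} and \eqref{equation:negi} follow from $\neg[0,0]=[1,1]$ and $\neg[0,1]=[0,1]$, and \eqref{equation:nabla} is the computation $\neg\Delta\neg[\alpha,\beta]=\neg\Delta[\neg\beta,\neg\alpha]=\neg[\neg\beta,\neg\beta]=[\beta,\beta]=\nabla[\alpha,\beta]$. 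The block \eqref{equation:deltazero}--\eqref{equation:deltanabla} records that $\Delta$ sends an interval to its degenerate left endpoint: $\Delta[0,0]=[0,0]$, $\Delta[1,1]=[1,1]$, $\Delta[0,1]=[0,0]$ since $\min A=0$, and both $\Delta\Delta x=\Delta x$ and $\Delta\nabla x=\nabla x$ are immediate from degeneracy. Likewise \eqref{equation:deltaoplus} and \eqref{equation:deltaodot} hold because the left endpoint of $x\oplus y$ and of $x\odot y$ is computed componentwise.

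The two genuinely order-sensitive equations are \eqref{equation:order} and \eqref{equation:great}, and these are where the constraint $\alpha\leq\beta$ built into the notion of interval enters. For \eqref{equation:order} I would compute $\Delta x\odot\neg\nabla x=[\alpha,\alpha]\odot[\neg\beta,\neg\beta]=[\alpha\odot\neg\beta,\alpha\odot\neg\beta]$, and since $\alpha\leq\beta$ is equivalent to $\alpha\odot\neg\beta=0$ in $A$, the result is $[0,0]=0$. For \eqref{equation:great} I would compute $\iii\odot\nabla x=[0,1]\odot[\beta,\beta]=[0,\beta]$, then $\iii\odot\nabla x\odot\neg\Delta x=[0,\beta]\odot[\neg\alpha,\neg\alpha]=[0,\beta\odot\neg\alpha]$, and finally $\Delta x\oplus(\iii\odot\nabla x\odot\neg\Delta x)=[\alpha,\alpha]\oplus[0,\beta\odot\neg\alpha]=[\alpha,\alpha\oplus(\neg\alpha\odot\beta)]$; as $\alpha\oplus(\neg\alpha\odot\beta)=\alpha\vee\beta=\beta$ by the MV join formula together with $\alpha\leq\beta$, this equals $[\alpha,\beta]=x$. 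It remains to treat \eqref{equation:mangani}: here $\Delta x=[\alpha,\alpha]$ and $\Delta y=[\gamma,\gamma]$ are degenerate, so via the isomorphism $\iota_A$ of \eqref{equation:iota} the identity becomes $\neg(\neg\alpha\oplus\gamma)\oplus\gamma=\neg(\neg\gamma\oplus\alpha)\oplus\alpha$ in $A$; both sides equal $\alpha\vee\gamma$, so this is merely the commutativity of $\vee$.

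The only steps requiring more than a routine translation are \eqref{equation:great} and \eqref{equation:order}; the point to get right is the repeated use of the order characterization $\alpha\leq\beta\iff\alpha\odot\neg\beta=0$ and of the join formula $\alpha\vee\beta=\alpha\oplus(\neg\alpha\odot\beta)$, since these are exactly the identities that encode, at the level of endpoints, the fact that $[\alpha,\beta]$ is a well-formed interval.
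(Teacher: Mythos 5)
Your proof is correct and takes essentially the same route as the paper's: both reduce every identity to a pair of endpoint identities in $A$, using the endpoint formula for Minkowski sums from Proposition \ref{proposition:riesz}, the identification of degenerate intervals with elements of $A$ (the paper's $\iota_A$) for \eqref{equation:mangani} and \eqref{equation:order}, and the MV-facts $\alpha\leq\beta\Leftrightarrow\alpha\odot\neg\beta=0$ and $\alpha\oplus(\neg\alpha\odot\beta)=\alpha\vee\beta$ for the two order-sensitive equations \eqref{equation:order} and \eqref{equation:great}, the latter being exactly the paper's closing computation. The one point to rephrase: in the paper $\odot$ on intervals is \emph{defined} pointwise by \eqref{odot}, so \eqref{equation:odot} is a (one-line De Morgan) claim to verify rather than a definition; your appeal to ``De Morgan in $A$'' is precisely that verification, so nothing substantive is missing.
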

\begin{proof}
\eqref{equation:associative}-\eqref{equation:negneg}
are easily verified properties of pointwise negation and 
Minkowski (truncated) sum \eqref{oplus}. They are inherited by
$\mathcal I(A)$ from the corresponding
properties of negation and truncated sum in $A$, \cite[Definition~1.1.1]{cigdotmun}. 

 To prove
\eqref{equation:mangani}, 
we first observe that
  both terms in this equation depend on $x$ and $y$
  only via $\Delta x$ and $\Delta y.$
Since
 $\Delta x$ and $\Delta y$ are degenerate intervals of 
 $\mathcal I(A)$, we can write
 $\Delta x =[\alpha,\alpha]=\iota_A(\alpha)$ and
  $\Delta y =[\beta,\beta]=\iota_A(\beta)$
  for some $\alpha,\beta\in A.$  Since
 $A$ is an MV-algebra, it satisfies the identity
  $\neg(\neg \alpha \oplus \beta)\oplus \beta=
  \neg(\neg \beta \oplus \alpha)\oplus \alpha$.
  Thus by  \eqref{equation:iota}, 
  $\mathcal I(A)$ satisfies
  \eqref{equation:mangani}.
 
  For the proof of \eqref{equation:odot} one
uses \eqref{odot} and the well known definability,
\cite[\S 1]{cigdotmun}, of 
$\odot$ from $\neg$ and $\oplus$ in $A$, as well
as in its isomorphic copy $\mathcal D(A)$. 
 
    Equations 
\eqref{equation:one}-\eqref{equation:deltanabla}
are immediate consequences of \eqref{constants} and
definitions
\eqref{delta}--\eqref{nabla}. 

For the verification of \eqref{equation:deltaoplus} one 
combines 
\eqref{oplus} with  the translation invariance of
the natural order of $\mathcal D(A)$, i.e.,
the distributivity of $\oplus$ and $\odot$
over $\vee$ and $\wedge$ \cite[Proposition~1.1.6]{cigdotmun}.

Equation  \eqref{equation:deltaodot} easily
follows by
\eqref{delta},  \eqref{equation:odot} and
\eqref{equation:deltaoplus}.

As a preliminary step for the proof of
  \eqref{equation:order}, one notes
that the left hand term therein  depends on  $x$ only via
 $\Delta x$ and  $\nabla x.$  Now,  
  $\Delta x, \nabla x\in D(A)$ are degenerate intervals
  in  $A$, acted upon by the  ($\iota_A$-images of) the
  MV-algebraic operations of $A$. 
  Equation
  \eqref{equation:order} states  that  for every interval $x=
  [\alpha,\beta]\in \mathcal I(A)$,
$\Delta x\leq \nabla x$,
i.e., $[\min x,\min x]\leq[\max x,\max x]$, i.e.,
$[\alpha,\alpha]\leq[\beta,\beta]$
in the natural order of $\mathcal D(A)$ inherited from
the natural order of $A$ via the isomorphism $\iota_A$. 
Going back via
  $\iota_A^{-1}$, \eqref{equation:order} 
  amounts to the inequality
  $\alpha\leq \beta$, which holds in $A$ by definition of interval.
  Thus
 $\mathcal I(A)$ satisfies \eqref{equation:order}. 

Finally, for the verification
of     \eqref{equation:great} letting $[\alpha,\beta]\in\mathcal{I}(A)$ we can write
\begin{align*}
\Delta [\alpha,\beta] \oplus ([0,1]\odot \nabla [\alpha,\beta]\odot  \neg\Delta [\alpha,\beta]) &= [\alpha,\alpha] \oplus ([0,1]\odot[\beta,\beta]\odot  \neg[\alpha,\alpha])\\
&=[\alpha,\alpha]\oplus [0,\beta\odot  \neg\alpha]=[\alpha,\alpha\oplus(\beta\odot  \neg\alpha)]\\
&=[\alpha,\beta].\qedhere
\end{align*}
\end{proof}

\bigskip 
 %%%%%%%%%%%%%%%%%%%%%%%%%%%
\section{Representation  of IMV-algebras}
%%%%%%%%%%%%%%%%%%%%%%%%%%%

\begin{definition}
\label{definition:imv} 
An {\it IMV-algebra}  
 is a structure
 $J=(J,
 0,1,\iii, \neg,\Delta,\nabla, \oplus,\odot )$ 
 of type
$(0,0,0,1,1,1,2,2)$ 
 satisfying equations
\eqref{equation:associative}-\eqref{equation:great}.
The {\it center}  $\,C(J)$  of  $J$ is the set  of all
elements  $x\in J$ such that 
$
\Delta x=\nabla x.
$
\end{definition}

From Proposition \ref{proposition:contemplation}
we immediately obtain:

\begin{proposition}
\label{proposition:trivial}
For any MV-algebra $A$ let
$\mathcal I(A)
= (I(A),
 0,1,\iii, \neg,\Delta,\nabla, \oplus,\odot )$ be  the algebra
 of intervals in  $A$,  
 as defined by \eqref{constants}--\eqref{odot} in view of 
 Proposition \ref{proposition:riesz}. Then
 
\medskip
 \begin{itemize}
 \item[(i)] $\,\,\mathcal I(A)$  is  an IMV-algebra. 
 
\medskip
    \item[(ii)]
   Every IMV-algebra satisfies the following
  quasiequation:
\begin{equation}
\label{equation:quasi}
\mbox{If  }
\Delta x=\Delta y\,\,\, \mbox{and}\,\,\, \nabla x = \nabla y 
\mbox{  then  } x=y.
\end{equation}

\medskip
\item[(iii)]
 For any  IMV-algebra  $J$, its
 $(0,1,\neg,\oplus,\odot)$-reduct is not an  MV-algebra.
Indeed, the equation
 $x\oplus \neg x=1$ fails in $\mathcal I(\I)$  (already with $x=\iii$).
The  MV-axiom 
$\neg(\neg \alpha\oplus \beta)\oplus \beta
=\neg(\neg\beta\oplus\alpha)\oplus \alpha$
 fails   in $\mathcal I(\I)$  for  $x=\iii,\,\, y=1.$
 
 \medskip
 \item[(iv)]  IMV-algebras are not term-equivalent 
(\cite[\S 4]{universal}) to
 MV-algebras: indeed, 
there is no
  two-element IMV-algebra.
    \end{itemize}
\end{proposition}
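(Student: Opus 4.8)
The plan is to treat the four items separately, since each becomes short once the governing axiom is isolated. Part (i) is immediate: by Definition \ref{definition:imv} an IMV-algebra is precisely an algebra of the stated type satisfying \eqref{equation:associative}--\eqref{equation:great}, and Proposition \ref{proposition:contemplation} asserts that $\mathcal I(A)$ satisfies exactly these equations; so no further work is required. For (ii), the key observation is that \eqref{equation:great} is a \emph{reconstruction identity}: it expresses $x$ as the term $\Delta x \oplus (\iii\odot\nabla x\odot\neg\Delta x)$, whose right-hand side involves only $\Delta x$, $\nabla x$ and the constant $\iii$. Hence, assuming $\Delta x=\Delta y$ and $\nabla x=\nabla y$, applying \eqref{equation:great} to $x$ and to $y$ gives
\[
x=\Delta x \oplus (\iii\odot\nabla x\odot\neg\Delta x)=\Delta y \oplus (\iii\odot\nabla y\odot\neg\Delta y)=y,
\]
which is the quasiequation \eqref{equation:quasi}.

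For (iii) I would first record that $\iii$ is a fixed point of negation, $\neg\iii=\iii$ by \eqref{equation:negi}, whereas $\neg 1=0$ by \eqref{equation:one} and \eqref{equation:negneg}. I then compute $\iii\oplus\neg\iii$, which since $\neg\iii=\iii$ reduces to $\iii\oplus\iii$. Using \eqref{equation:deltaoplus} and \eqref{equation:deltai} one gets $\Delta(\iii\oplus\iii)=\Delta\iii\oplus\Delta\iii=0$, and a dual computation through \eqref{equation:nabla}, \eqref{equation:deltaodot} and \eqref{equation:odot} gives $\nabla(\iii\oplus\iii)=1=\nabla\iii$; so by (ii) we obtain $\iii\oplus\iii=\iii$, i.e. $\iii\oplus\neg\iii=\iii$. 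Were the $(0,1,\neg,\oplus,\odot)$-reduct an MV-algebra it would satisfy $x\oplus\neg x=1$, forcing $\iii=1$ and then $\iii=\neg\iii=\neg 1=0$, i.e. triviality. Thus in every nontrivial IMV-algebra the reduct is not an MV-algebra. Concretely, in $\mathcal I(\I)$---an IMV-algebra by (i)---one has $\iii=\I$, $\neg\iii=\I$, so $\iii\oplus\neg\iii=\I\neq 1$, and the MV-axiom $\neg(\neg\alpha\oplus\beta)\oplus\beta=\neg(\neg\beta\oplus\alpha)\oplus\alpha$ fails at $\alpha=\iii,\ \beta=1$ by a direct interval computation; since IMV-algebras form an equational class, these failures already show that the extra MV-axioms cannot be theorems of IMV-algebras.

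For (iv) I would argue through the absence of a two-element model. In a two-element IMV-algebra with $0\neq 1$, negation swaps $0$ and $1$ (by \eqref{equation:one} and \eqref{equation:negneg}) and so has no fixed point; but $\neg\iii=\iii$ by \eqref{equation:negi} while $\iii$ must be one of the two elements, a contradiction. Hence no two-element IMV-algebra exists, whereas the two-element Boolean MV-algebra does. Term-equivalent varieties are linked by a bijection preserving underlying sets and therefore realize the same cardinalities; as MV-algebras admit a two-element member and IMV-algebras do not, the two classes cannot be term-equivalent.

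The items are all brief, and the real leverage is the single recognition that \eqref{equation:great} reconstructs $x$ from $\Delta x$ and $\nabla x$, which drives (ii) and, through it, (iii). The one point demanding care is the boundary behaviour in (iii): the universal phrasing excludes the trivial one-element IMV-algebra, whose reduct is the trivial MV-algebra, so one must either restrict to nontrivial algebras or read the claim at the level of the reduct functor---exactly what the $\mathcal I(\I)$ counterexample certifies. In (iv) the only non-routine ingredient is the standard fact that term-equivalence preserves the class of underlying-set cardinalities, which is what turns ``no two-element IMV-algebra'' into non-term-equivalence.
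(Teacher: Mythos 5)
Your proof is correct, and it follows the route the paper intends: the paper offers no written proof of this proposition at all (it is introduced with ``From Proposition \ref{proposition:contemplation} we immediately obtain''), and your arguments supply exactly the missing details --- (i) by matching Definition \ref{definition:imv} against Proposition \ref{proposition:contemplation}, (ii) from the reconstruction identity \eqref{equation:great}, (iii) via the negation fixed point \eqref{equation:negi} and the $\mathcal I(\I)$ counterexamples, (iv) via the nonexistence of a two-element IMV-algebra. Where you genuinely add something is in (iii): exhibiting the failure of $x\oplus\neg x=1$ in the single algebra $\mathcal I(\I)$ strictly shows only that this identity is not an IMV-theorem, whereas you derive $\iii\oplus\neg\iii=\iii$ equationally in an \emph{arbitrary} IMV-algebra (using \eqref{equation:deltaoplus}, \eqref{equation:deltai}, \eqref{equation:nabla}, \eqref{equation:coneutral} and then the quasiequation \eqref{equation:quasi}), so the universal claim holds for every nontrivial $J$; and you correctly flag the edge case the paper's universal phrasing glosses over, namely the one-element IMV-algebra, whose reduct \emph{is} the trivial MV-algebra. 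Your closing step in (iv) --- term-equivalence preserves underlying sets, the two-element Boolean algebra is an MV-algebra, hence no term-equivalence can exist --- is the standard argument that the paper's parenthetical ``indeed'' presupposes, and your fixed-point proof that no two-element IMV-algebra exists (negation swaps $0$ and $1$, yet must fix $\iii$) is exactly the right justification.
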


\medskip
As a converse of (i) above, 
 in  Theorem \ref{theorem:representation} we
 will see that, up to isomorphism, algebras of the form
$\mathcal I(A)$ exhaust all possible IMV-algebras.

The quasiequation
\eqref{equation:quasi} in
 is reminiscent of Moisil's
 ``determination principle'', \cite[p.106]{moisil}.

\medskip

\begin{proposition}
  \label{proposition:varia}
  Let $J$ be an  IMV-algebra.

  \begin{itemize}
  \item[(i)]  For all $x\in J$,\,\,\,  $\Delta x=\nabla x
\,\,\,\mbox{\it iff}\,\,\, x=\nabla x  \,\,\,\mbox{\it iff}\,\,\,
  x=\Delta x   \,\,\,\mbox{\it iff}\,\,\, $
    $x=\Delta y$ for some $y\in J$
$\,\,\,\mbox{\it iff}\,\,\,
  x=\nabla z$ for some  $z \in J$.
  
  \medskip
  \item[(ii)]  The center $C(J)$ is closed under the operations
     $\neg,\oplus, \odot$ of $J$ and contains the
 elements $0,1$ of $J$. The resulting
     subreduct  
     $\mathcal C(J)=(C(J),0,1,\neg,\oplus,\odot)$ of $J$
     is an MV-algebra, called the\,
{\em central}  MV-algebra of  $J.$

\medskip
\item[(iii)]     For any IMV-algebra $K$  
     and homomorphism  $\theta\colon J\to K$, the
     restriction $\theta\restrict C(J)$ of $\theta$ to 
      $C(J)$   is a homomorphism
     of $\mathcal C(J)$ into  $\mathcal C(K).$
 
 \medskip    
\item[(iv)] 

Let us define the
 map  $\gamma_J\colon J\to \mathcal{I}(\mathcal{C}(J))$ by the following stipulation:
 \begin{equation}
 \label{equation:central}
 \gamma_J(x) = [\Delta x,\nabla x], \,\,\,\mbox{ for all } x\in J.
 \end{equation}
 Then $\gamma_J$  maps $J$ {\em one-one} into 
 $\mathcal{I}(\mathcal C(J))$.

  \end{itemize}
 \end{proposition}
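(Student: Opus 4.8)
The plan is to first record the De Morgan duality between $\Delta$ and $\nabla$ and to derive the ``$\nabla$-analogues'' of the $\Delta$-axioms, since these auxiliary identities drive all four parts. From \eqref{equation:nabla} and \eqref{equation:negneg} one reads off $\neg\nabla x=\Delta\neg x$ and, replacing $x$ by $\neg x$, also $\neg\Delta x=\nabla\neg x$. Substituting $\neg x$ for $x$ in \eqref{equation:deltanabla} and rewriting $\nabla\neg x=\neg\Delta x$ then yields the crucial fixed-point identity $\Delta(\neg\Delta x)=\neg\Delta x$, whence $\nabla\Delta x=\neg\Delta\neg\Delta x=\neg\neg\Delta x=\Delta x$. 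In the same spirit, dualizing \eqref{equation:deltaoplus} and \eqref{equation:deltaodot} through De Morgan (using \eqref{equation:odot}) gives $\nabla(x\oplus y)=\nabla x\oplus\nabla y$ and $\nabla(x\odot y)=\nabla x\odot\nabla y$, and a short computation gives $\nabla\nabla x=\nabla x$. I expect this dualization to be the main obstacle: the axioms are deliberately asymmetric, being stated almost entirely in terms of $\Delta$, so the whole proposition rests on first showing that $\nabla=\neg\Delta\neg$ genuinely inherits the dual behaviour of $\Delta$.

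For part (i) I would run a cycle of implications. Each of the conditions ``$x=\Delta y$ for some $y$'', ``$x=\nabla z$ for some $z$'', ``$x=\Delta x$'', ``$x=\nabla x$'' forces $x$ to be a common fixed point of $\Delta$ and $\nabla$; e.g. if $x=\Delta y$ then by \eqref{equation:deltadelta} and the derived $\nabla\Delta=\Delta$ we get $\Delta x=x=\nabla x$, hence $\Delta x=\nabla x$, and symmetrically for $x=\nabla z$ via $\nabla\nabla=\nabla$ and $\Delta\nabla=\nabla$. The only implication needing the ``large'' axioms is $\Delta x=\nabla x\Rightarrow x=\Delta x$: assuming $\Delta x=\nabla x$, \eqref{equation:order} gives $\nabla x\odot\neg\Delta x=\Delta x\odot\neg\Delta x=0$, and since $\iii\odot 0=0$ (a short consequence of \eqref{equation:coneutral}, \eqref{equation:odot}, \eqref{equation:one}), the right-hand summand of \eqref{equation:great} collapses and $x=\Delta x\oplus 0=\Delta x$ by \eqref{equation:opluszero}. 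Closing this cycle and its $\nabla$-mirror establishes all the stated equivalences.

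For (ii) and (iii) the memberships $0,1\in C(J)$ follow from \eqref{equation:deltazero}, \eqref{equation:deltaone} together with the dual values $\nabla 0=0$, $\nabla 1=1$ computed from $\nabla=\neg\Delta\neg$. Closure of $C(J)$ under $\neg,\oplus,\odot$ is then immediate from the duality identities: if $\Delta x=\nabla x$ and $\Delta y=\nabla y$, then $\Delta\neg x=\neg\nabla x=\neg\Delta x=\nabla\neg x$, while \eqref{equation:deltaoplus} and its $\nabla$-analogue give $\Delta(x\oplus y)=\Delta x\oplus\Delta y=\nabla x\oplus\nabla y=\nabla(x\oplus y)$, and likewise for $\odot$. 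To see that $\mathcal C(J)$ is an MV-algebra, note that all axioms except the \luk\ axiom hold throughout $J$ by \eqref{equation:associative}--\eqref{equation:negneg} and \eqref{equation:odot}; the \luk\ axiom holds on $C(J)$ because by (i) every $x\in C(J)$ satisfies $x=\Delta x$, so \eqref{equation:mangani} specializes to exactly $\neg(\neg x\oplus y)\oplus y=\neg(\neg y\oplus x)\oplus x$. Part (iii) is then a one-line check: a homomorphism $\theta$ preserves $\Delta$ and $\nabla$, so $\Delta x=\nabla x$ implies $\Delta\theta(x)=\nabla\theta(x)$, i.e. $\theta$ carries $C(J)$ into $C(K)$, and it preserves $0,1,\neg,\oplus,\odot$ by hypothesis.

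Finally, for (iv) I must first check that $\gamma_J$ is well defined, namely that $[\Delta x,\nabla x]$ is a genuine interval of the MV-algebra $\mathcal C(J)$: its endpoints lie in $C(J)$ as fixed points of $\Delta$ and $\nabla$, and $\Delta x\leq\nabla x$ holds in $\mathcal C(J)$ because \eqref{equation:order} asserts $\Delta x\odot\neg\nabla x=0$, which is exactly the natural order $\Delta x\leq\nabla x$. Injectivity is then immediate: if $[\Delta x,\nabla x]=[\Delta y,\nabla y]$, then, the endpoints of an interval being its min and max, $\Delta x=\Delta y$ and $\nabla x=\nabla y$, so the quasiequation \eqref{equation:quasi} of Proposition \ref{proposition:trivial}(ii) forces $x=y$.
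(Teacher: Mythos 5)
Your proof is correct and follows essentially the same route as the paper's: the paper likewise derives the $\nabla$-duality from \eqref{equation:nabla}, gets closure of $C(J)$ from \eqref{equation:deltaoplus}--\eqref{equation:deltaodot}, obtains the \luk\ axiom on $C(J)$ by specializing \eqref{equation:mangani} via (i), and proves (iv) from \eqref{equation:order} (monotone pair) together with the quasiequation \eqref{equation:quasi} (injectivity). Your only deviation is cosmetic: for the implication $\Delta x=\nabla x\Rightarrow x=\Delta x$ in (i) you use \eqref{equation:order} and \eqref{equation:great} directly (via $\iii\odot 0=0$), whereas the paper invokes \eqref{equation:quasi}; since \eqref{equation:quasi} is itself an immediate consequence of \eqref{equation:great}, the two arguments coincide in substance, and your write-up simply makes explicit the steps the paper labels ``easily follows'' and ``easy''.
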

 
 \begin{proof}  (i) 
 easily follows
  from \eqref{equation:deltadelta},\eqref{equation:deltanabla}
  and  \eqref{equation:quasi}.

  (ii) The closure of
$\mathcal C(J)$ under   $\neg,\oplus, \odot$ 
follows 
from  
(i) and \eqref{equation:nabla}, \eqref{equation:deltaoplus},
\eqref{equation:deltaodot}.
Thus, 
$\mathcal C(J)=(C(J),0,1,\neg,\oplus,\odot)$ is
a $(0,1,\neg,\oplus, \odot)$-subreduct   of $J$ and, as such,  it
necessarily
satisfies equations \eqref{equation:associative}--\eqref{equation:negneg}.
Further,  any two elements  
  $\alpha,\beta  \in \mathcal C(J)$ satisfy  the characteristic
  MV-algebraic equation  $\neg(\neg \alpha\oplus \beta)\oplus \beta
  =\neg(\neg \beta\oplus \alpha)\oplus \alpha.$ 
This is so  because by
Proposition \ref{proposition:varia}(i) we can write
$\alpha=\Delta x,\,\,\, \beta =\Delta y$ for suitable  $x,y\in J$,
and  $J$ satisfies  \eqref{equation:mangani}. 
Finally, since the  constant $1$ and the operation $\odot$
of $\mathcal C(J)$  are definable via  \eqref{equation:one}
and \eqref{equation:odot},  then $\mathcal C(J)$ is an
MV-algebra.

(iii) Easy.

(iv)  As already noted, every element 
 $x\in J$ satisfies equation \eqref{equation:order} stating that
 $(\Delta x,\nabla x)$ is a monotone
 pair of  $\mathcal C(J).$  Equation 
  \eqref{equation:quasi} ensures that different
   elements of $J$ are mapped by 
   $\gamma_J$ into different elements.
 \end{proof}

The following converse
of Proposition \ref{proposition:trivial}(i) shows that
$\gamma_J$  is an isomorphism of $J$ {\it onto}
the IMV-algebra of all intervals in  $\mathcal C(J).$

\medskip

\begin{theorem}[\bf Representation theorem for IMV-algebras]
\label{theorem:representation} 
Let $J$ be an IMV-algebra,  
$\mathcal C(J)$ its
central  MV-algebra, and
$\mathcal I(\mathcal C(J)) =
\{[\alpha,\beta ]\in C(J)\times C(J) \mid \alpha \leq \beta\}$
the
IMV-algebra of all intervals in 
 $ \mathcal C(J)$ equipped with the
 operations  \eqref{constants}--\eqref{nabla}.
 Then the map 
$
\gamma_J \colon x\in J\mapsto
[\Delta x,\nabla x] \in  \mathcal I(\mathcal C(J)) 
$
of Proposition \ref{proposition:varia}(iv)
is an isomorphism of $J$ onto
$\mathcal I(\mathcal C(J))$.
\end{theorem}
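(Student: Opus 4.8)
The plan is to build on Proposition~\ref{proposition:varia}(iv), which already establishes that $\gamma_J$ is injective; it therefore remains to prove that $\gamma_J$ is a homomorphism and that it is onto. Throughout I would use that, by Proposition~\ref{proposition:riesz} applied inside the MV-algebra $\mathcal C(J)$, the IMV-operations of $\mathcal I(\mathcal C(J))$ act on endpoints in the expected way: $\neg[\alpha,\beta]=[\neg\beta,\neg\alpha]$, $\Delta[\alpha,\beta]=[\alpha,\alpha]$, $\nabla[\alpha,\beta]=[\beta,\beta]$, and $[\alpha_1,\beta_1]\oplus[\alpha_2,\beta_2]=[\alpha_1\oplus\alpha_2,\,\beta_1\oplus\beta_2]$, with $0=[0,0]$, $1=[1,1]$, $\iii=[0,1]$.

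First I would record the ``$\nabla$-dual'' identities that are needed but are not literally among the axioms. Using \eqref{equation:nabla} together with \eqref{equation:negneg} one gets $\Delta\neg x=\neg\nabla x$ and $\nabla\neg x=\neg\Delta x$. Since by Proposition~\ref{proposition:varia}(i) the elements $\Delta x,\nabla x$ lie in $C(J)$, one has $\nabla\Delta x=\Delta x$ and $\nabla\nabla x=\nabla x$ (while $\Delta\nabla x=\nabla x$ is \eqref{equation:deltanabla}). Finally, writing $\neg(x\oplus y)=\neg x\odot\neg y$ via \eqref{equation:odot} and \eqref{equation:negneg} and combining \eqref{equation:deltaoplus}, \eqref{equation:deltaodot} with the negation rules yields the dual distribution laws $\nabla(x\oplus y)=\nabla x\oplus\nabla y$ and $\nabla(x\odot y)=\nabla x\odot\nabla y$.

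With these in hand, verifying that $\gamma_J$ is a homomorphism is routine. For the constants, \eqref{equation:deltazero}, \eqref{equation:deltaone}, \eqref{equation:deltai}, \eqref{equation:negi} (with the negation rules) give $\gamma_J(0)=[0,0]$, $\gamma_J(1)=[1,1]$, $\gamma_J(\iii)=[0,1]$. For negation, $\gamma_J(\neg x)=[\Delta\neg x,\nabla\neg x]=[\neg\nabla x,\neg\Delta x]=\neg\gamma_J(x)$. For $\Delta$ and $\nabla$ one uses $\Delta\Delta x=\Delta x=\nabla\Delta x$ (by \eqref{equation:deltadelta}) and $\Delta\nabla x=\nabla x=\nabla\nabla x$ to match the endpoint formulas. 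For $\oplus$, the pair \eqref{equation:deltaoplus} and its $\nabla$-dual give $\gamma_J(x\oplus y)=[\Delta x\oplus\Delta y,\,\nabla x\oplus\nabla y]=\gamma_J(x)\oplus\gamma_J(y)$, and preservation of $\odot$ follows since $\odot$ is term-defined from $\neg,\oplus$ by \eqref{equation:odot} in both algebras.

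The main work is surjectivity, and here equation \eqref{equation:great} is read ``backwards''. Given an arbitrary $[\alpha,\beta]\in\mathcal I(\mathcal C(J))$, so that $\alpha,\beta\in C(J)$ and $\alpha\leq\beta$ in $\mathcal C(J)$, I would set
\[
x=\alpha\oplus(\iii\odot\beta\odot\neg\alpha)\in J,
\]
and claim $\Delta x=\alpha$ and $\nabla x=\beta$, whence $\gamma_J(x)=[\alpha,\beta]$. Applying $\Delta$ and using \eqref{equation:deltaoplus}, \eqref{equation:deltaodot}, the centrality $\Delta\alpha=\alpha$, and $\Delta\iii=0$ from \eqref{equation:deltai}, every summand but $\alpha$ collapses to $0$, giving $\Delta x=\alpha$. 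Applying $\nabla$ and using the dual distribution laws together with $\nabla\iii=1$, $\nabla\alpha=\alpha$, $\nabla\beta=\beta$, and $\nabla\neg\alpha=\neg\alpha$, one obtains $\nabla x=\alpha\oplus(\beta\odot\neg\alpha)$; but inside the MV-algebra $\mathcal C(J)$ this is exactly $\alpha\vee\beta=\beta$, the last equality because $\alpha\leq\beta$. The only genuine subtlety is this final reduction, which must be carried out in $\mathcal C(J)$ via the MV-algebraic identity $\alpha\oplus(\beta\odot\neg\alpha)=\alpha\vee\beta$; all preceding steps are bookkeeping with the axioms and their $\nabla$-duals. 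Combining injectivity, the homomorphism property, and surjectivity yields that $\gamma_J$ is an isomorphism of $J$ onto $\mathcal I(\mathcal C(J))$.
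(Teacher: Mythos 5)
Your proof is correct, and although it shares the paper's overall skeleton (injectivity imported from Proposition \ref{proposition:varia}(iv), verification that $\gamma_J$ is a homomorphism, then surjectivity), it diverges at the decisive surjectivity step --- and there your argument is the sound one. The paper exhibits the preimage of $[\alpha,\beta]$ as $(\alpha\oplus\iii)\odot(\beta\odot\iii)$, computing $\gamma_J(\alpha\oplus\iii)=[\alpha,1]$ and $\gamma_J(\beta\odot\iii)=[0,\beta]$, and then asserting that $[\alpha,1]\odot[0,\beta]=[\alpha,\beta]$; but since $\odot$ acts endpoint-wise and is monotone in both arguments, $[\alpha,1]\odot[0,\beta]=[\alpha\odot 0,\,1\odot\beta]=[0,\beta]$, so the paper's witness fails whenever $\alpha\neq 0$. (Indeed no single monotone binary operation $f$ applied endpoint-wise to $[\alpha,1]$ and $[0,\beta]$ can ever work: one would need $f(\alpha,0)=\alpha$ and $f(1,\beta)=\beta$ for all $\alpha,\beta$, forcing $f(1,0)$ to equal both $1$ and $0$.) Your witness $x=\alpha\oplus(\iii\odot\beta\odot\neg\alpha)$, obtained by reading \eqref{equation:great} backwards, is exactly the term $\zeta$ of \eqref{equation:zeta} that the paper itself introduces later (Proposition \ref{proposition:positional}(iv), Remark \ref{remark:salvagente}), and your computation $\Delta x=\alpha$, $\nabla x=\alpha\oplus(\beta\odot\neg\alpha)=\alpha\vee\beta=\beta$ is valid: the final identity takes place inside the MV-algebra $\mathcal C(J)$, which is legitimate because $C(J)$ is closed under $\neg,\oplus,\odot$ by Proposition \ref{proposition:varia}(ii). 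A second point in your favour is the explicit derivation of the $\nabla$-dual identities ($\nabla\neg x=\neg\Delta x$, $\nabla(x\oplus y)=\nabla x\oplus\nabla y$, $\nabla(x\odot y)=\nabla x\odot\nabla y$) from the axioms: the paper uses these silently, and its citation of Proposition \ref{proposition:riesz} only justifies endpoint-wise computations inside $\mathcal I(\mathcal C(J))$, not identities holding in the abstract algebra $J$. In short, your route keeps the paper's structure but repairs a genuine flaw in its surjectivity argument.
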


\begin{proof}
Evidently, $\gamma_J(0)=[0,0],\,\,
 \gamma_J(1)=[1,1],\,\,\gamma_J(\iii)=C(J)$.
By  \eqref{equation:deltaoplus} and  Proposition~\ref{proposition:riesz},
$
\gamma_J(x\oplus y)=[\Delta (x\oplus y),\nabla(x\oplus y) ]
=
[\Delta x\oplus\Delta y , \nabla x \oplus \nabla y]
=
[\Delta x, \nabla x]\oplus
[\Delta y, \nabla y]
=
 \gamma_J(x)\oplus \gamma_J(y).
$
By \eqref{equation:negneg} and
\eqref{equation:nabla},
  $\gamma_J(\neg x)=\neg\gamma_J(x).$  
By   \eqref{equation:deltaodot},  
 $\gamma_J(x\odot y)=\gamma_J(x)\odot \gamma_J(y).$ 
By \eqref{equation:deltadelta},
\eqref{equation:deltanabla} and \eqref{equation:nabla}
for all  $x\in J$ we have   
$\gamma_J(\Delta x)=[\Delta\Delta x, \nabla\Delta x]=[\Delta x, \Delta x]
=\Delta[\Delta x,\nabla x]=\Delta\gamma_J(x).$  
 Similarly,
$\gamma_J(\nabla x)=\nabla\gamma_J(x).$
Let $[\alpha,\beta]\in \mathcal I(\mathcal  C(J))$,\,\,\,
 $\gamma_J(\alpha\oplus \iii)=[\alpha,\alpha]\oplus[0,1]=[\alpha,1]$
 and   $\gamma_J(\beta\odot \iii)=[0,\beta]$.
Then $\gamma_J((\alpha\oplus \iii)\odot(\beta\odot \iii))=[\alpha,\beta]$.
Thus every interval in  $\mathcal C(J)$
is in $\gamma_J(J)$.  
\end{proof}

%%%%%%%%%%%%%%%%%%%%%%%%%%%%%%%%%%%%%%
\section{IMV-algebras are categorically equivalent to MV-algebras}
%%%%%%%%%%%%%%%%%%%%%%%%%%%%%%%%%%%%%

In \eqref{equation:iota} we observed
 that the map $\iota_A\colon A\to \mathcal D(A)=\mathcal C(\mathcal I(A))$ is an isomorphism of MV-algebras.  Also, in Theorem~\ref{theorem:representation} we proved that the map $\gamma_{J}\colon J\to \mathcal I(\mathcal C (J))$ defined by $\gamma_{J}(x)=[\Delta(x),\nabla(x)]$ is an isomorphism of IMV-algebras. In this section we 
 prove that  the category $\MV$ of MV-algebras 
 with homomorphisms and the category  $\IMV$ 
 of IMV-algebras with homomorphisms are equivalent,
  and that $\iota$ and $\gamma$ are the natural isomorphisms 
  determining the 
   equivalence.
   
    For all unexplained notions in category theory
used in this paper, we refer to  \cite{mcl}.

\medskip
\begin{lemma}
\label{definition:mv-functors}
Let   $\mathcal I\colon \MV\to\IMV$ be  the assignment defined by:
\begin{eqnarray*}
\mbox{objects:} \,\,\,\,\,\,\,\,\,\,\,\,\,\,\,\,\,\,\,\,\,\,\,\,\,\, \,\,\,\,\,\,\,\,\,\,\,\,\,\,\,\,\,\,\,\, 
 A  \,\,\,\,&\mapsto&\,\,\,\, \mathcal I( A)\\
\mbox{morphisms:} \,\,\,\,\,\,\,\,\,\,\,\, \,\, h\colon A\to B\,\,\,\,& \mapsto&\,\,\,\,
 \mathcal I(h)\colon \mathcal I(A)\to\mathcal I(B),
\end{eqnarray*}

\medskip
\noindent
where the homomorphism  $\mathcal I(h)$
is given by $(\mathcal I(h))([\alpha,\beta])=[h(\alpha),h(\beta)]$, for any interval
$[\alpha,\beta]\in \mathcal I( A)$.
Conversely,  let 
  $\mathcal C\colon \IMV\to\MV$  be the assignment
defined by:
\begin{eqnarray*}
\mbox{objects:}  \,\,\,\,\,\,\,\,\,\,\,\,\,\,\,\,\,\,\,\,\,\,\,\,\,\,\,\,\,\,\,\,\,\,\,\,\,\,\,\,\,\,\,\,\,\,\,\,
\,\,\,\,\,\, \,\,\,
J \,\,\,\,&\mapsto& \mathcal C( J)\\
\mbox{morphisms:}\,\,\,\,\,\,\,\,\,\,\,\, \,\,\,\,\,\,\,\,\,\,\,
f\colon J\to K\,\,\,\,\,\,\, &\mapsto& \ \mathcal C( f)=
f\restrict{C(J)}.
\end{eqnarray*}
 Both $\,\,\mathcal I$ and $\mathcal C$ are  well-defined functors, respectively
called {\em interval} and {\em central} functor.
\end{lemma}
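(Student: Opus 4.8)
The plan is to verify two independent claims: first, that $\mathcal I$ is a functor from $\MV$ to $\IMV$; second, that $\mathcal C$ is a functor from $\IMV$ to $\MV$. For the functor $\mathcal I$, the object part is already settled, since Proposition~\ref{proposition:trivial}(i) tells us that $\mathcal I(A)$ is an IMV-algebra for every MV-algebra $A$. The work is therefore on the morphism part. Given an MV-homomorphism $h\colon A\to B$, I first observe that $\mathcal I(h)$ is well-defined on objects of $I(A)$: for an interval $[\alpha,\beta]$ with $\alpha\leq\beta$ in $A$, monotonicity of $h$ with respect to the natural order gives $h(\alpha)\leq h(\beta)$, so $[h(\alpha),h(\beta)]$ is a genuine interval in $B$. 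Then I would check that $\mathcal I(h)$ preserves each of the eight operations $0,1,\iii,\neg,\Delta,\nabla,\oplus,\odot$. The constants $0=[0,0]$, $1=[1,1]$ are preserved because $h$ preserves $0,1$; the constant $\iii=A$ is preserved because $h$ is onto $B$—this surjectivity point is the one subtlety here, so one must either restrict to the appropriate category or note that $\mathcal I(h)(A)=[h(0),h(1)]=[0,1]=B$ regardless, since $h(A)$ has least element $h(0)=0$ and greatest element $h(1)=1$. Preservation of $\neg,\Delta,\nabla$ follows pointwise from $h$ being an MV-homomorphism, and preservation of $\oplus,\odot$ follows from Proposition~\ref{proposition:riesz} together with $h(\alpha\oplus\beta)=h(\alpha)\oplus h(\beta)$. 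Finally functoriality—$\mathcal I(\mathrm{id}_A)=\mathrm{id}_{\mathcal I(A)}$ and $\mathcal I(k\circ h)=\mathcal I(k)\circ\mathcal I(h)$—is immediate from the formula $(\mathcal I(h))([\alpha,\beta])=[h(\alpha),h(\beta)]$.

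For the functor $\mathcal C$, the object part is Proposition~\ref{proposition:varia}(ii), which asserts that $\mathcal C(J)$ is an MV-algebra for every IMV-algebra $J$. The morphism part, that $\mathcal C(f)=f\restrict C(J)$ is an MV-homomorphism of $\mathcal C(J)$ into $\mathcal C(K)$, is exactly the content of Proposition~\ref{proposition:varia}(iii). So for $\mathcal C$ there is essentially nothing left to prove beyond these two citations, plus the trivial functoriality check that restriction of the identity is the identity and restriction respects composition, both of which are immediate since $\mathcal C(f)$ is literally the set-theoretic restriction of $f$.

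The main obstacle, such as it is, lies in the morphism part of $\mathcal I$: one must confirm that an IMV-homomorphism in the image genuinely preserves all eight operations, and in particular that the constant $\iii$ is handled correctly. The cleanest route is to argue that for any IMV-homomorphism the image of $\iii$ is again the top interval of the central MV-algebra, using the representation Theorem~\ref{theorem:representation} to identify $\iii$ intrinsically as $[\Delta\iii,\nabla\iii]=[0,1]$; since any homomorphism preserves $\Delta,\nabla,0,1$, it preserves $\iii$. Everything else is a routine unwinding of definitions \eqref{constants}--\eqref{nabla} together with the fact that $h$ is an MV-homomorphism, and I would present it compactly rather than operation by operation.
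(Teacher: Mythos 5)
Your proof is correct and takes exactly the route the paper intends: the paper states this lemma without proof as a routine verification, delegating the object parts to Proposition~\ref{proposition:trivial}(i) and Proposition~\ref{proposition:varia}(ii)--(iii), and the operation-by-operation check of $\mathcal I(h)$ that you sketch is precisely what is carried out in the paper's generalized setting (Theorems~\ref{Theo:GeneralFunctor1} and~\ref{Theo:GeneralFunctors} of Section~7). One remark: the ``surjectivity subtlety'' you raise for $\iii$ is illusory, since $\iii^{\mathcal I(A)}$ is the interval $[0_A,1_A]$ and $\mathcal I(h)$ acts on endpoints rather than by set-theoretic image, so for \emph{any} MV-homomorphism $h$ one has $\mathcal I(h)(\iii^{\mathcal I(A)})=[h(0_A),h(1_A)]=[0_B,1_B]=\iii^{\mathcal I(B)}$ --- no restriction of the category, no appeal to $h(A)$, and no detour through Theorem~\ref{theorem:representation} is needed, which is essentially the resolution you correctly settle on.
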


\begin{lemma}
\label{lemma:iota} 
The assignment $\iota\colon A \to \iota_A$ 
is a natural isomorphism from the identity 
functor ${\rm I}_{\MV}\colon \MV\to \MV$ 
to the composite functor $\mathcal{C}\circ \mathcal{I}$.
\end{lemma}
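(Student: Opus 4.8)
The plan is to verify that $\iota$ satisfies the two defining conditions of a natural isomorphism: first, that each component $\iota_A$ is an isomorphism in $\MV$, and second, that the naturality square commutes for every MV-homomorphism. The first condition is already in hand, since equation \eqref{equation:iota} established that $\iota_A\colon A\to \mathcal D(A)$ is an isomorphism, and by definition $\mathcal D(A)=\mathcal C(\mathcal I(A))$; thus each $\iota_A$ is an isomorphism of $A$ onto $(\mathcal C\circ\mathcal I)(A)$. Since a natural transformation all of whose components are isomorphisms is automatically a natural isomorphism, the entire burden of the proof reduces to checking naturality.

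For naturality, I would fix an arbitrary MV-homomorphism $h\colon A\to B$ and show that the square
\begin{equation*}
(\mathcal C\circ\mathcal I)(h)\circ \iota_A = \iota_B \circ {\rm I}_{\MV}(h)
\end{equation*}
commutes, i.e.\ that $(\mathcal C(\mathcal I(h)))\circ\iota_A = \iota_B\circ h$ as maps from $A$ into $\mathcal C(\mathcal I(B))$. First I would unwind the right-hand composite: for $\alpha\in A$ we have $\iota_B(h(\alpha))=[h(\alpha),h(\alpha)]$. Then I would unwind the left-hand composite on the same input: $\iota_A(\alpha)=[\alpha,\alpha]$, to which the functor applies $\mathcal I(h)$, yielding by definition $\mathcal I(h)([\alpha,\alpha])=[h(\alpha),h(\alpha)]$; the central functor $\mathcal C$ merely restricts to the center, which does not alter the value on this degenerate interval. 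Both composites therefore send $\alpha$ to $[h(\alpha),h(\alpha)]$, so the square commutes pointwise.

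The only genuinely delicate point is bookkeeping about where each map lives, rather than any substantive computation. One must confirm that $\mathcal C(\mathcal I(h))$ is indeed the correct morphism to compose with $\iota_A$: by Lemma~\ref{definition:mv-functors}, $\mathcal I(h)\colon\mathcal I(A)\to\mathcal I(B)$ is an IMV-homomorphism, and $\mathcal C(\mathcal I(h))$ is its restriction to the center $C(\mathcal I(A))=D(A)$, with image in $C(\mathcal I(B))=D(B)$. Since $\iota_A$ has image exactly $D(A)$, the composite is well typed, and the computation above shows it agrees with $\iota_B\circ h$. I expect this step to be entirely routine; the main obstacle is purely notational, namely keeping straight the identifications $\mathcal C(\mathcal I(A))=\mathcal D(A)$ supplied by the definition of the central functor, so that the degenerate intervals produced by $\iota_A$ are recognized as elements of the center on which $\mathcal C(\mathcal I(h))$ acts as $h$.
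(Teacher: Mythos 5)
Your proposal is correct and follows essentially the same route as the paper: the isomorphism of each component $\iota_A$ is taken from \eqref{equation:iota} together with the identification $\mathcal C(\mathcal I(A))=\mathcal D(A)$, and naturality is verified by the pointwise computation $\mathcal C(\mathcal I(h))(\iota_A(\alpha))=\mathcal I(h)([\alpha,\alpha])=[h(\alpha),h(\alpha)]=\iota_B(h(\alpha))$, exactly as in the paper. Your extra remark on the well-typedness of $\mathcal C(\mathcal I(h))$ is a sound bookkeeping point that the paper leaves implicit.
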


\begin{proof} 
By \eqref{equation:iota},
 for each MV-algebra $A$, the map $\iota_{A}$ is an MV-isomorphism. 
There remains to be proved
 that $\iota$ is a {\it natural transformation}, that is, 
for every MV-algebras $A,B$ and homomorphism
  $h\colon A\to B,$ the  diagram of Figure
  \ref{figure:sulcentro} commutes.

   \begin{figure} [ht]
\begin{center}
\begin{tikzpicture} 
[auto, text depth=0.25ex,] 
\matrix[row sep= .9cm, column sep= 1.9cm]
{
\node (11) {$A$}; &\node (12) {$B$};\\
\node (21) {$\mathcal C(\mathcal I(A))$}; &\node (22) {$\mathcal C(\mathcal I(B))$};\\
};
\draw [->] (11) to node {$h$} (12);
\draw [->] (21) to node [swap] {$\mathcal{C}(\mathcal{I}(h))$} (22);
\draw [->] (11) to node [swap] {$\iota_A$} (21);
\draw [->] (12) to node {$\iota_B$} (22);
\end{tikzpicture}
\end{center}
\caption{}
\label{figure:sulcentro}
\end{figure}

\noindent
Indeed, for each $\alpha \in A$ we can write
\[
\mathcal{C}(\mathcal{I}(h))(\iota_A(\alpha))
=\mathcal{C}(\mathcal{I}(h))([\alpha,\alpha])
=\mathcal{I}(h)([\alpha,\alpha])=[h(\alpha),h(\alpha)]
=\iota_{B}(h(\alpha)).\qedhere
\]
 \end{proof}
  
 \bigskip 
  \begin{lemma}
\label{lemma:gamma}
The assignment
 $\gamma\colon J \to \gamma_J$ is a natural isomorphism from the identity functor ${\rm I}_{\IMV}\colon \IMV\to \IMV$ to the composite functor $\mathcal{I}\circ \mathcal{C}$. 
\end{lemma}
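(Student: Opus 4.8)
The plan is to mirror the proof just given for Lemma~\ref{lemma:iota}, now on the $\IMV$-side. By the Representation Theorem~\ref{theorem:representation}, for every IMV-algebra $J$ the map $\gamma_J\colon J\to \mathcal{I}(\mathcal{C}(J))$ is already an isomorphism of IMV-algebras. Hence each component of $\gamma$ is an isomorphism, and the only thing left to verify is that $\gamma$ is a \emph{natural transformation} from ${\rm I}_{\IMV}$ to $\mathcal{I}\circ\mathcal{C}$.

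Concretely, I would fix IMV-algebras $J,K$ and a homomorphism $f\colon J\to K$, and check commutativity of the naturality square, i.e.
\[
\gamma_K\circ f \;=\; \mathcal{I}(\mathcal{C}(f))\circ \gamma_J .
\]
Evaluating both composites at an arbitrary $x\in J$, the top-right path gives
\[
\gamma_K(f(x)) \;=\; [\Delta f(x),\,\nabla f(x)],
\]
while the left-bottom path, using $\gamma_J(x)=[\Delta x,\nabla x]$ together with $\mathcal{C}(f)=f\restrict{C(J)}$ and the endpointwise action of $\mathcal{I}(\mathcal{C}(f))$, gives
\[
\mathcal{I}(\mathcal{C}(f))(\gamma_J(x)) \;=\; \mathcal{I}(\mathcal{C}(f))\bigl([\Delta x,\nabla x]\bigr) \;=\; [f(\Delta x),\,f(\nabla x)].
\]

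The two expressions coincide precisely because $f$, being a homomorphism of IMV-algebras, preserves the unary operations $\Delta$ and $\nabla$ of the IMV-signature, so that $f(\Delta x)=\Delta f(x)$ and $f(\nabla x)=\nabla f(x)$. This is the heart of the argument, though it is not really an obstacle: preservation of $\Delta,\nabla$ is immediate from $f$ being a homomorphism. The only side point to record, for the right-hand path to be well-defined, is that $f(\Delta x),f(\nabla x)\in C(K)$; this holds because $\mathcal{C}(f)$ maps $C(J)$ into $C(K)$ by Proposition~\ref{proposition:varia}(iii), as already used in Lemma~\ref{definition:mv-functors}. Having established commutativity of the square for every $f$, and combining it with the componentwise isomorphism supplied by Theorem~\ref{theorem:representation}, I conclude that $\gamma$ is a natural isomorphism from ${\rm I}_{\IMV}$ to $\mathcal{I}\circ\mathcal{C}$, which together with Lemmas~\ref{definition:mv-functors} and~\ref{lemma:iota} will yield the desired categorical equivalence.
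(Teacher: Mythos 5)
Your proposal is correct and follows essentially the same route as the paper's own proof: invoke Theorem~\ref{theorem:representation} for the componentwise isomorphisms, then verify the naturality square by computing both composites at an arbitrary $x\in J$ and using that an IMV-homomorphism $f$ commutes with $\Delta$ and $\nabla$. Your added remark that $f(\Delta x),f(\nabla x)\in C(K)$ (via Proposition~\ref{proposition:varia}(iii)) is a harmless extra check that the paper leaves implicit.
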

\begin{proof}

By Theorem~\ref{theorem:representation}, for each IMV-algebra $J$ the map $\gamma_J\colon J\to \mathcal{I}(\mathcal{C}(J))$ is an isomorphism. 
To prove 
 that  $\gamma$ is a natural transformation,
  let  $J,K$ be IMV-algebras and $f\colon J\to K$ an IMV-homomorphism. Then the diagram of Figure \ref{figure:sulcentrobis} commutes.
   \begin{figure} [ht]
\begin{center}
\begin{tikzpicture} 
[auto, text depth=0.25ex,] 
\matrix[row sep= .9cm, column sep= 1.9cm]
{
\node (11) {$J$}; &\node (12) {$K$};\\
\node (21) {$\mathcal I(\mathcal C(J))$}; &\node (22) {$\mathcal I(\mathcal C(K))$};\\
};
\draw [->] (11) to node {$f$} (12);
\draw [->] (21) to node [swap]{$\mathcal{I}(\mathcal{C}(f))$} (22);
\draw [->] (11) to node [swap] {$\gamma_J$} (21);
\draw [->] (12) to node {$\gamma_K$} (22);
\end{tikzpicture}
\end{center}
\caption{}
\label{figure:sulcentrobis}
\end{figure}

\medskip
\noindent
Indeed,  since $f$ commutes with $\Delta$ and $\nabla$,
for each $x\in J$ we can write
\begin{align*}
\mathcal{I}(\mathcal{C}(f))(\gamma_{J}(x))&=\mathcal{I}(\mathcal{C}(f))([\Delta x,\nabla x])=[\mathcal{C}(f)(\Delta x),\mathcal{C}(f) (\nabla x)]=[f(\Delta x),f(\nabla x)]\\&=[\Delta f(x),\nabla f(x)]=\gamma_{K}(f(x)).\qedhere
\end{align*}

  \end{proof}

\bigskip
From Lemmas \ref{lemma:iota}
 and~\ref{lemma:gamma}, we obtain:
 
 \medskip
 
\begin{theorem}[\bf Categorical equivalence]
\label{theorem:equivalence} 
The  functors\,\,
 $\mathcal I$ and $\mathcal C$  and the natural isomorphisms 
$\iota$ and $\gamma$ determine  a categorical equivalence 
between MV-algebras and IMV-algebras.
\end{theorem}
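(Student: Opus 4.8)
The plan is to assemble the three preceding lemmas into the standard definition of an equivalence of categories from \cite{mcl}: an equivalence between $\MV$ and $\IMV$ is witnessed by a pair of functors $\mathcal I\colon \MV\to\IMV$ and $\mathcal C\colon \IMV\to\MV$ together with natural isomorphisms ${\rm I}_{\MV}\cong \mathcal C\circ\mathcal I$ and ${\rm I}_{\IMV}\cong \mathcal I\circ\mathcal C$. Since all of these ingredients have already been produced, the proof is a bookkeeping assembly requiring no new verification.

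First I would recall, invoking Lemma \ref{definition:mv-functors}, that $\mathcal I$ and $\mathcal C$ are genuine covariant functors between the two categories, so that the composites $\mathcal C\circ\mathcal I\colon\MV\to\MV$ and $\mathcal I\circ\mathcal C\colon\IMV\to\IMV$ are well defined. Then I would invoke Lemma \ref{lemma:iota}, which supplies the natural isomorphism $\iota\colon {\rm I}_{\MV}\Rightarrow\mathcal C\circ\mathcal I$ whose components $\iota_A$ are the MV-isomorphisms of \eqref{equation:iota}, and Lemma \ref{lemma:gamma}, which supplies the natural isomorphism $\gamma\colon {\rm I}_{\IMV}\Rightarrow\mathcal I\circ\mathcal C$ whose components $\gamma_J$ are the IMV-isomorphisms of Theorem \ref{theorem:representation}. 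The simultaneous existence of these two natural isomorphisms is precisely the definition of a categorical equivalence, so the stated conclusion follows at once, with $\iota$ and $\gamma$ serving as the unit and counit.

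As for the main obstacle, there is essentially none left at this point: all the substantive work has already been discharged upstream. The delicate content is packaged inside Theorem \ref{theorem:representation} (the bijectivity of $\gamma_J$, hence that each component is an isomorphism and not merely a one-one homomorphism) and inside the naturality squares of Figures \ref{figure:sulcentro} and \ref{figure:sulcentrobis}; once those diagrams commute and the components are invertible, the theorem is immediate. The only embellishment one might add for completeness is the equivalent characterization that $\mathcal I$ is full, faithful, and essentially surjective, which can be read off directly from $\gamma$ and $\iota$ should a functor-theoretic rather than a unit/counit formulation of equivalence be preferred; but for the statement as given, citing the definition together with the three lemmas suffices.
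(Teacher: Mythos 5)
Your proposal is correct and follows exactly the paper's own route: the paper proves this theorem by simply citing Lemmas \ref{definition:mv-functors}, \ref{lemma:iota} and \ref{lemma:gamma} and invoking the definition of categorical equivalence, with all substantive work discharged upstream just as you say. (One minor terminological caution: $\gamma\colon {\rm I}_{\IMV}\Rightarrow\mathcal I\circ\mathcal C$ points in the opposite direction from a counit, though since it is a natural isomorphism this does not affect correctness.)
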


\bigskip
%%%%%%%%%%%%%%%
\section{Completeness,  free IMV-algebras,  product and  inclusion order}
\label{section:completeness}
%%%%%%%%%%%%%%%
As a first application of the categorical equivalence  $\mathcal I$ between
MV-algebras and IMV-algebras, we give a complete
description of free IMV-algebras. Since categorical equivalence does not necessarilly preserve freeness, the description of  free {IMV}-algebras cannot be directly derived from a description of free {MV}-algebras.
%
%\commento{L to D: I Added some words, to justify the work.\\
%D to L: excellent}
%
%

  We refer to
\cite{cigdotmun} for all unexplained MV-algebraic notions
used here.  Fix an integer  $n>0$. Then
a  {\it McNaughton function} on $\I^n$ is a continuous piecewise linear
 map
$\xi\colon\I^n\rightarrow \I$ such that each linear piece of $\xi$
has integer coefficients.
 More generally, for any arbitrary (possibly infinite) set $X\not=\emptyset$,
a function $\eta\colon \I^{X}\to \I$ is  said to be a {\it McNaughton map}
if for some  finite $Y\subseteq X$, and  McNaughton function
$\xi\colon \I^{Y}\to \I$ we have

$$\eta=\xi\circ \pi^{X}_{Y},\,\,\,\mbox{where }\,\,\,
\pi^{X}_{Y}\colon \I^X\to \I^{Y}\,\,\,\mbox{is the projection map. }$$
\noindent
For each  closed set $S\subseteq \I^{X}$ we let 
$\McN(S)$ denote the set of restrictions to $S $
of the McNaughton functions on the cube $\I^{X}$, in symbols,
 $$\McN(S)=\{f\restrict S\mid f\colon \I^{X}\to \I\mbox{ a McNaughton map}\},$$
equipped with the pointwise operations of the standard MV-algebra
$\I.$  In this paper $S$ will always be nonempty, so that  
 $\McN(S)$ is a nontrivial MV-algebra.
 
 \medskip
 We let the triangle $\Theta\subseteq \I^2$ be defined by
 \begin{equation}
 \label{equation:triangle}
 \Theta=\{(\alpha,\beta)\in  \I^2  \mid \alpha\leq \beta \}.
 \end{equation}
 For each  $i=1,2$ we  also let $\pi_i\colon \Theta\to \I$
 be the projection maps  $\pi_i(\alpha_1,\alpha_2)=\alpha_i.$

\begin{theorem}[\bf Equational Completeness]
\label{theorem:hsp}
The operations of the IMV-algebra
$$
\boldsymbol{\Theta}=(\Theta,(0,0),(1,1),(0,1), \neg,\Delta,\nabla, \oplus,\odot)
$$
 are defined
 for each $(a,b),(c,d)\in \Theta$ as follows: 
 
 \smallskip
 \begin{itemize}
 \item[]  $\neg(a,b)=(\neg b,\neg a)$,\,\,\, $\Delta(a,b)
=(a,a)$, \,\,\,$\nabla(a,b)=(b,b)$, 

\medskip
\item[]  $(a,b)\oplus(c,d)=(a\oplus c,b\oplus d)$\,\, and 
\,\,$(a,b)\odot (c,d)=(a\odot c,b\odot d)$.
\end{itemize}

 \smallskip
 \noindent
Let the
 map  $\omega\colon \mathcal I([0,1]) \to \Theta$ be defined by
  $[a,b]\mapsto (a,b)$.  Then $\omega$ is an isomorphism between 
the IMV-algebras $\mathcal I([0,1])$ and  $\boldsymbol{\Theta}$, in symbols,
\begin{equation}
\label{equation:omega}
\omega\colon \mathcal I([0,1])\cong \boldsymbol{\Theta}. 
\end{equation}
Further, with the notation of \cite[Defintion~9.1]{bursan},
  \begin{equation}
  \label{equation:generators}
   \IMV=\mathbb{HSP}(\mathcal I([0,1]))=\mathbb{HSP}(\boldsymbol{\Theta}).
   \end{equation}
   Thus an equation holds in all IMV-algebras iff it holds in
   $\mathcal I([0,1])$. 
\end{theorem}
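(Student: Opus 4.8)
The plan is to dispose of the isomorphism $\omega$ by direct verification, and then to derive the variety generation \eqref{equation:generators} by pushing Chang's completeness theorem $\MV=\mathbb{HSP}([0,1])$ through the interval functor $\mathcal I$. For \eqref{equation:omega}, the map $\omega$ is patently a bijection, since both $\mathcal I([0,1])$ and $\Theta$ are parametrised by the pairs $(a,b)$ with $a\leq b$; preservation of the constants $0,1,\iii$ and of the unary operations $\neg,\Delta,\nabla$ reads off immediately from the displayed formulas, while preservation of $\oplus$ and $\odot$ uses Proposition \ref{proposition:riesz}, which gives $[a,b]\oplus[c,d]=[a\oplus c,b\oplus d]$ and dually for $\odot$, so that under $\omega$ these become the coordinatewise operations defining $\boldsymbol\Theta$.

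For \eqref{equation:generators}, the inclusion $\mathbb{HSP}(\mathcal I([0,1]))\subseteq\IMV$ is immediate: $\IMV$ is the equational class \eqref{equation:associative}-\eqref{equation:great}, it contains $\mathcal I([0,1])$ by Proposition \ref{proposition:trivial}(i), and is therefore closed under $\mathbb H,\mathbb S,\mathbb P$. The substantive inclusion $\IMV\subseteq\mathbb{HSP}(\mathcal I([0,1]))$ I would obtain from three preservation properties of $\mathcal I$: it carries products to products (an interval in $\prod_i A_i$ is determined coordinatewise by its endpoints, so $\mathcal I(\prod_i A_i)\cong\prod_i\mathcal I(A_i)$); it carries subalgebras to subalgebras (for $B$ a subalgebra of $A$ the endpoint map $[\alpha,\beta]\mapsto[\alpha,\beta]$ embeds $\mathcal I(B)$ into $\mathcal I(A)$); and it carries surjections to surjections. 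Granting these, let $J$ be any IMV-algebra. By the representation Theorem \ref{theorem:representation}, $J\cong\mathcal I(\mathcal C(J))$ with $\mathcal C(J)$ an MV-algebra, so by Chang's theorem \cite[\S 2.5]{cigdotmun} there are a cardinal $\kappa$, a subalgebra $S\leq[0,1]^\kappa$, and a surjection $h\colon S\to\mathcal C(J)$. Applying $\mathcal I$, the algebra $\mathcal I(S)$ is a subalgebra of $\mathcal I([0,1]^\kappa)\cong\mathcal I([0,1])^\kappa$, while $\mathcal I(h)\colon\mathcal I(S)\to\mathcal I(\mathcal C(J))\cong J$ is onto; hence $J\in\mathbb{HSP}(\mathcal I([0,1]))$. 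Since $\omega$ is an isomorphism this class equals $\mathbb{HSP}(\boldsymbol\Theta)$, and the closing equivalence of equations follows because an algebra and the variety it generates satisfy exactly the same identities.

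The step I expect to be the main obstacle is the preservation of surjections, because the two endpoints cannot be lifted independently. Given a surjective MV-homomorphism $h\colon A\to B$ and an interval $[\gamma,\delta]$ in $B$, arbitrary preimages of $\gamma$ and $\delta$ need not be comparable in $A$. The remedy I would use is lattice-theoretic: choose any preimages $\alpha_0$ of $\gamma$ and $\beta$ of $\delta$ and replace $\alpha_0$ by $\alpha_0\wedge\beta$. Then $\alpha_0\wedge\beta\leq\beta$, so $[\alpha_0\wedge\beta,\beta]$ is a genuine interval of $A$, and $h(\alpha_0\wedge\beta)=\gamma\wedge\delta=\gamma$ precisely because $\gamma\leq\delta$; thus $\mathcal I(h)([\alpha_0\wedge\beta,\beta])=[\gamma,\delta]$ and $\mathcal I(h)$ is surjective.
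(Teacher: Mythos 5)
Your proof is correct, and its top-level decomposition matches the paper's: verify $\omega$ via Proposition \ref{proposition:riesz}, note $\mathbb{HSP}(\mathcal I([0,1]))\subseteq\IMV$ trivially, and obtain the converse inclusion by pushing Chang's theorem $\MV=\mathbb{HSP}([0,1])$ through the interval functor using Theorem \ref{theorem:representation}. Where you genuinely diverge is in how the crucial preservation properties of $\mathcal I$ are established. The paper gets them abstractly: since $\mathcal I$ is a categorical equivalence (Theorem \ref{theorem:equivalence}), it preserves products, subalgebras, and homomorphic images, the last because homomorphic images in a variety are exactly codomains of regular epimorphisms, which any equivalence preserves. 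You instead prove the three preservation facts by hand: products via the coordinatewise identification $\mathcal I(\prod_i A_i)\cong\prod_i\mathcal I(A_i)$, subalgebras via the endpoint embedding, and surjectivity of $\mathcal I(h)$ via the meet trick $[\alpha_0\wedge\beta,\beta]\mapsto[\gamma\wedge\delta,\delta]=[\gamma,\delta]$. That last point is exactly the nontrivial content hidden inside the paper's appeal to regular epimorphisms --- two independently chosen preimages of the endpoints need not be comparable --- and you correctly identified it as the obstacle and resolved it (using that $\wedge$ is an MV-term operation, hence preserved by $h$). What each approach buys: the paper's is shorter and makes clear that \eqref{equation:generators} is a formal consequence of the equivalence, at the price of invoking categorical facts about regular epis; yours is elementary and self-contained, with every step verifiable from Proposition \ref{proposition:riesz} and standard universal algebra (the $\mathbb{HSP}$ characterization of generated varieties), and it would survive even in settings where one has the concrete functor but has not yet proved it is an equivalence.
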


\begin{proof}
\eqref{equation:omega} is immediately verified in the light
of Proposition \ref{proposition:riesz}.
As a categorical equivalence between two varieties
(Theorem \ref{theorem:equivalence})
the interval functor $\mathcal I$ preserves
products, subalgebras and homomorphic images (the later 
are preserved since homomorphic images
in every variety  are codomains of
 regular epi-morphisms),  \cite{mcl}.
Using Chang completeness theorem
 \cite[Theorem~2.5.3]{cigdotmun} in combination with
 \cite[Theorem~9.5]{bursan}, we can write
  $\MV=\mathbb{HSP}([0,1])$. 
  Then
    \eqref{equation:generators} immediately follows
    from   Theorem~\ref{theorem:equivalence} and
  \eqref{equation:omega}.  The rest now follows from
  Birkhoff completeness theorem for  equational logic, \cite[Theorem~14.19]{bursan}.
\end{proof}

\begin{theorem} [\bf Free IMV-algebras]
\label{theorem:free}
For any cardinal 
$\kappa\geq 1$  the
  free  $\kappa$-generator IMV-algebra
  is the algebra $\mathcal{I}(\McN(\Theta^{\kappa}))$.
  A free generating set for this algebra is given by the
 intervals $[\pi_1\circ  \boldsymbol{\pi}_\alpha,\pi_2\circ  \boldsymbol{\pi}_\alpha ]$ where
  $  \boldsymbol{\pi}_\alpha\colon \Theta^{\kappa}\to \Theta$  
is the projection map into the $\alpha$th
coordinate,  for each ordinal $\alpha<\kappa$.
\end{theorem}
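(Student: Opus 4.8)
The plan is to establish freeness of $\mathcal{I}(\McN(\Theta^{\kappa}))$ by exploiting the categorical equivalence of Theorem~\ref{theorem:equivalence} together with the standard description of free MV-algebras. First I would identify the free MV-algebra on $\kappa$ generators. By Chang completeness and the McNaughton representation theorem, the free $\kappa$-generator MV-algebra is $\McN(\I^{\kappa})$, with free generators the coordinate projections $\pi_\alpha\colon \I^{\kappa}\to\I$. The central observation is that the interval functor $\mathcal{I}\colon\MV\to\IMV$ is one half of a categorical equivalence, and while equivalences need not send free objects to free objects in general, they \emph{do} send a free object on a set $X$ to a free object provided one tracks how the generating set transforms. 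So the task reduces to showing that $\mathcal{I}(\McN(\I^{\kappa}))$ is free in $\IMV$ and then rewriting $\McN(\I^{\kappa})$ as $\McN(\Theta^{\kappa})$.

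\textbf{Key steps.} The crucial algebraic input is the universal property. Given any IMV-algebra $J$ and any assignment of the proposed generators $[\pi_1\circ\boldsymbol{\pi}_\alpha,\pi_2\circ\boldsymbol{\pi}_\alpha]$ to elements $j_\alpha\in J$, I must produce a unique IMV-homomorphism extending it. The strategy is to pass through the central MV-algebra. By Theorem~\ref{theorem:representation}, $J\cong\mathcal{I}(\mathcal{C}(J))$, so specifying an IMV-homomorphism out of $\mathcal{I}(\McN(\Theta^{\kappa}))$ is equivalent, via the functor $\mathcal{C}$ and the natural isomorphisms $\iota,\gamma$ of Lemmas~\ref{lemma:iota} and~\ref{lemma:gamma}, to specifying an MV-homomorphism $\mathcal{C}(\mathcal{I}(\McN(\Theta^{\kappa})))\to\mathcal{C}(J)$. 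Since $\mathcal{C}(\mathcal{I}(A))\cong A$ by $\iota$, this amounts to an MV-homomorphism $\McN(\Theta^{\kappa})\to\mathcal{C}(J)$. The generators of $J$ determine, via $\Delta$ and $\nabla$, a pair of elements $\Delta j_\alpha,\nabla j_\alpha\in\mathcal{C}(J)$ subject to $\Delta j_\alpha\le\nabla j_\alpha$, i.e.\ a point of $\Theta$ interpreted in $\mathcal{C}(J)$; these are exactly the data needed to invoke freeness of the MV-algebra.

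\textbf{Identifying the free MV-algebra.} The remaining point is to verify that $\McN(\Theta^{\kappa})$ \emph{is} free as an MV-algebra on the $2\kappa$-element generating set $\{\pi_i\circ\boldsymbol{\pi}_\alpha : i\in\{1,2\},\ \alpha<\kappa\}$, subject to the relations $\pi_1\circ\boldsymbol{\pi}_\alpha\le\pi_2\circ\boldsymbol{\pi}_\alpha$. Here I would note that $\Theta\subseteq\I^2$ is a closed (indeed rational polyhedral) set, so $\McN(\Theta)$ is the quotient of the free MV-algebra on two generators by the ideal of functions vanishing on $\Theta$; more to the point, $\McN(\Theta^{\kappa})$ coincides with $\McN(\McN$-spectrum$)$ in the sense that it is the MV-algebra freely generated relative to the defining inequalities of the polyhedron $\Theta^\kappa$. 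The cleanest route is to show directly that $\McN(\Theta^{\kappa})\cong\mathcal{C}(\mathcal{I}(\McN(\Theta^{\kappa})))$ and that the latter carries the universal property for IMV-algebras; equivalently, that $\Theta^{\kappa}$ is the maximal spectrum realizing all monotone pairs, so every choice of images extends uniquely.

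\textbf{The main obstacle} is precisely this last identification: proving that $\McN(\Theta^{\kappa})$ is the \emph{correct} MV-algebra, i.e.\ that passing through $\mathcal{C}$ sends the free IMV-algebra to the MV-algebra of McNaughton functions on the \emph{triangle-power} $\Theta^{\kappa}$ rather than on the full cube $\I^{2\kappa}$. The geometric content is that an IMV-generator is not a free pair of MV-elements but a \emph{monotone} pair (an element of $\Theta$), so the relevant polyhedron is $\Theta^\kappa$, cut out by the inequalities $\pi_1\circ\boldsymbol{\pi}_\alpha\le\pi_2\circ\boldsymbol{\pi}_\alpha$. I would handle this by showing that the universal mapping property of $\mathcal{I}(\McN(\Theta^{\kappa}))$ reflects, under $\mathcal{C}$ and $\iota$, exactly the universal mapping property of $\McN(\Theta^{\kappa})$ as the MV-algebra universal for $\kappa$-indexed families of monotone pairs; the equivalence then transports freeness back from $\MV$ to $\IMV$, yielding that the displayed intervals generate freely.
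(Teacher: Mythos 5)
Your reduction via the categorical equivalence is sound as far as it goes: IMV-homomorphisms $\mathcal{I}(\McN(\Theta^{\kappa}))\to J$ sending each interval $[\pi_1\circ\boldsymbol{\pi}_\alpha,\pi_2\circ\boldsymbol{\pi}_\alpha]$ to a prescribed $j_\alpha\in J$ do correspond bijectively, through $\mathcal{C}$, $\iota$, $\gamma$ and equation \eqref{equation:great}, to MV-homomorphisms $\McN(\Theta^{\kappa})\to\mathcal{C}(J)$ sending $\pi_1\circ\boldsymbol{\pi}_\alpha\mapsto\Delta j_\alpha$ and $\pi_2\circ\boldsymbol{\pi}_\alpha\mapsto\nabla j_\alpha$. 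But this only reduces the theorem to the statement you yourself flag as ``the main obstacle'': that $\McN(\Theta^{\kappa})$ is the MV-algebra freely generated by $\kappa$ \emph{monotone pairs}, i.e.\ presented by generators $y_\alpha,z_\alpha$ and relations $y_\alpha\leq z_\alpha$. At that point your argument becomes circular: you propose to handle the obstacle by showing that the universal property of $\mathcal{I}(\McN(\Theta^{\kappa}))$ reflects, under $\mathcal{C}$ and $\iota$, that of $\McN(\Theta^{\kappa})$ --- but that is exactly the reduction you already performed, not a proof of either universal property. The missing content is genuinely MV-algebraic and does not follow from Chang completeness or McNaughton's theorem as cited: one must either show that every McNaughton function on $\I^{2\kappa}$ vanishing on $\Theta^{\kappa}$ lies in the ideal generated by the functions $(\pi_1\circ\boldsymbol{\pi}_\alpha)\odot\neg(\pi_2\circ\boldsymbol{\pi}_\alpha)$ (a polyhedral/Lipschitz argument, or an appeal to the theory of presented MV-algebras), or else compute the relevant algebra by hand. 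The paper does the latter: using $\IMV=\mathbb{HSP}(\boldsymbol{\Theta})$ it realizes the free IMV-algebra concretely as the subalgebra of $\boldsymbol{\Theta}^{(\Theta^{\kappa})}$ generated by the projections, proves by induction on terms (Claim 1) that both components of every element are McNaughton maps, shows (Claim 2) that its center is exactly $\McN(\Theta^{\kappa})$, and only then invokes Theorem \ref{theorem:representation}. Some such computation is indispensable and is absent from your proposal.

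A secondary but real error: your opening plan asserts that the task ``reduces to showing that $\mathcal{I}(\McN(\I^{\kappa}))$ is free in $\IMV$ and then rewriting $\McN(\I^{\kappa})$ as $\McN(\Theta^{\kappa})$''. No such rewriting is possible: $\McN(\I^{\kappa})$ and $\McN(\Theta^{\kappa})$ are non-isomorphic MV-algebras (already for $\kappa=1$, their maximal spectra are the segment $\I$ and the two-dimensional triangle $\Theta$), and consequently $\mathcal{I}(\McN(\I^{\kappa}))$ is \emph{not} free on $\kappa$ generators in $\IMV$. This is precisely the point of the paper's warning that categorical equivalences need not preserve freeness: the center of the free IMV-algebra is neither the free MV-algebra on $\kappa$ generators nor the one on $2\kappa$ generators, but the relatively free algebra on $\kappa$ monotone pairs. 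You recover from this misstep later in the proposal, but the correct target algebra is exactly what has to be identified by explicit work, and that identification is the part you leave unproved.
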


\begin{proof}
Let  $X$ be a set of cardinality $\kappa$. 
As a consequence of \eqref{equation:generators}, 
the free IMV-algebra on $\kappa$ generators is isomorphic to the IMV-subalgebra 
$\F_{\IMV}(X)$
of $\boldsymbol{\Theta}^{(\Theta^X)}$ generated by the projection maps. 

\medskip

\noindent{\it Claim 1}: Let $f\colon \Theta^X\to \Theta$ be an
 element of $\F_{\IMV}(X)$.
 Then both functions
 $\pi_1\circ f$ and $\pi_2\circ f$ are members of $ \McN(\Theta^{X})$.

\medskip
The proof is by induction on  the number of applications
of the  IMV-operations needed to obtain   $f$ from the
projection maps $ \boldsymbol{\pi}_x$.

\medskip
\noindent{\it Basis Step}: If
 for some $x\in X$,\,\,\,
  $f=  \boldsymbol{\pi}_x$, then the map $\pi_1\circ f=\pi_1\circ   \boldsymbol{\pi}_x
  \colon \Theta^X\to\I$ coincides over $\Theta^X$ with the map 
$  \boldsymbol{\pi}_{x,1}\colon (\I^2)^X\to \I$
sending  any $v\in (\I^2)^X$ into $\pi_1(v_x)$. 
 Since 
 $  \boldsymbol{\pi}_{x,1}$ is  continuous piecewise linear
  with only one piece,
   and its unique linear piece has integer coefficients,
    then $  \boldsymbol{\pi}_{x,1}$
 is a member of $
  \McN((\I^2)^X)$, and
  hence $f\in \McN(\Theta^X)$. Similarly, $\pi_2\circ \boldsymbol{\pi}_x\in \McN(\Theta^X)$.

\medskip
\noindent{\it Induction Step}: \,\,\, By \eqref{equation:negi}  and~\eqref{equation:one} it is enough to argue only for $\Delta$, $\neg$ and $\oplus$.
Let $f,g\colon \Theta^X\to \Theta$ be members of 
 $\F_{\IMV}(X)$.
For each $v\in \Theta^X$ we can write
\begin{align*}
(\pi_1\circ(\Delta_{\F_{\IMV}(X)} f))(v)&=\pi_1(\Delta  f(v)) =\pi_1(f(v))=(\pi_1\circ f)(v);\\
(\pi_2\circ(\Delta_{\F_{\IMV}(X)} f))(v)&=\pi_2(\Delta  f(v)) =\pi_1(f(v))=(\pi_1\circ f)(v);\\
(\pi_1\circ(\neg_{\F_{\IMV}(X)} f))(v)&=\pi_1(\neg  f(v)) =\neg(\pi_2(f(v)))=(\neg_{\McN(\Theta^X)}(\pi_2\circ f))(v);\\
(\pi_2\circ(\neg_{\F_{\IMV}(X)} f))(v)&=\pi_2(\neg  f(v)) =\neg(\pi_1(f(v)))=(\neg_{\McN(\Theta^X)}(\pi_1\circ f))(v)\\
(\pi_1\circ(f\oplus_{\F_{\IMV}(X)} g))(v)&=\pi_1( f(v)\oplus  g(v)) =\pi_1(f(v))\oplus \pi_1(g(v))\\
&=(\pi_1\circ f)\oplus_{\McN(\Theta^X)}(\pi_1\circ g)(c);\\
(\pi_2\circ(f\oplus_{\F_{\IMV}(X)} g))(v)&=\pi_2( f(v)\oplus  g(v)) =\pi_2(f(v))\oplus \pi_2(g(v))\\
&=\bigl((\pi_2\circ f)\oplus_{\McN(\Theta^X)}(\pi_2\circ g)\bigr)(c).
\end{align*}
Thus whenever $f$ and $g$ satisfy 
 $\pi_1\circ f,\,\,\,\pi_2\circ f,\,\,\,\pi_1\circ g,\,\,\,\pi_2\circ g\in\McN(\Theta^X)$, 
 all the maps 
 $\pi_1\circ(\Delta_{\F_{\IMV}(X)} f)$, \,\,\,$\pi_2\circ(\Delta_{\F_{\IMV}(X)} f)$, \,\,\,$\pi_1\circ(\neg_{\F_{\IMV}(X)} f)$, \,\,\,$\pi_2\circ(\neg_{\F_{\IMV}(X)} f)$, \,\,\,
 $\pi_1\circ(f\oplus_{\F_{\IMV}(X)} g)$, \,\,\, $\pi_2\circ(f\oplus_{\F_{\IMV}(X)} g)$  belong to $\McN(\Theta^X)$.

\medskip

\noindent{\it Claim 2}: $\mathcal{C}(\F_{\IMV}(X))\cong \McN(\Theta^{X})$.

Let $\eta\colon \mathcal{C}(\F_{\IMV}(X))\to \McN(\Theta^{X})$ be defined by $\eta(f)=\pi_1\circ f$ for any map
 $f\colon \Theta^{X}\to \Theta$  in  $\mathcal{C}(\F_{\IMV}(X))$. 
By Claim 1, 
 $\eta(f)=\pi_1\circ f\in\McN(\Theta^{X})$ for each $f\in\mathcal{C}(\F_{\IMV}(X))$, 
thus proving that  $\eta$ is well defined. 
To see that $\eta$ is an MV-homomorphism, first observe that $0_{\F_{\IMV}(X)}$ is the constant map $0_{\F_{\IMV}(X)}(x)=(0,0)$ for each $x\in \Theta^{X}$. Thus  $\eta(0_{\F_{\IMV}(X)})(x)=0$ for each $x\in \Theta^{X}$. 
Since $f=(f_1,f_2)\in \mathcal{C}(\F_{\IMV}(X))$,  
then 
$$f_1=\pi_1\circ f=\eta(f)=\eta(\nabla f)=\pi_1\circ (\pi_2\circ f,\pi_2\circ f)= \pi_2\circ f=f_2,$$  and for every $x\in \Theta^{X}$,
\begin{align*}
\eta(\neg_{\F_{\IMV}(X)} f)(x)&=\pi_1((\neg_{\F_{\IMV}(X)} f)(x))= \pi_1(\neg (f(x)))\\
&=\pi_1(\neg f_1(x),\neg f_1(x))=\neg f_1(x)=(\neg \eta(f))(x).
\end{align*}
Now let $f=(f_1,f_2),\,\,g=(g_1,g_2)\in \mathcal{C}(\F_{\IMV}(X))$. It follows that 
\begin{align*}
(\eta(f\oplus_{\F_{\IMV}(X)} g))(x)&=\eta( (f\oplus  g)(x))=\eta( (f_1(x)\oplus g_1(x),f_2(x)\oplus g_2(x)))\\
&=f_1(x)\oplus g_1(x)=(\eta(f)\oplus \eta(g))(x).
\end{align*}

\noindent
It is easy to see that
 $\eta$ is one-to-one. Indeed, $\eta(f)=\eta(g)$, implies  $\pi_1\circ f=\pi_1\circ g$ and $\pi_2\circ f=\pi_2\circ g$, that is, $f=g$. To see that $\eta$ is onto, for each $x\in X$ let $  \boldsymbol{\pi}_{1,x},  \boldsymbol{\pi}_{2,x}\colon(\I^2)^X\to \I^2$ denote the projection maps $  \boldsymbol{\pi}_{1,x}(v)=\pi_1(  \boldsymbol{\pi}_x(v))$ and  $  \boldsymbol{\pi}_{2,x}(v)=\pi_2(  \boldsymbol{\pi}_x(v))$. Since
 the MV-algebra $\McN((\I^2)^X)$ is generated by these projection maps, then $\McN(\Theta^X)$ is generated by the restriction of the projection maps $  \boldsymbol{\pi}_{1,x}$ and $  \boldsymbol{\pi}_{2,x}$ to $\Theta^X$. 
 Now for each $x\in X$ we have the identities
 $  \boldsymbol{\pi}_{1,x}\restrict{\Theta^X}
 =\pi_1\circ(\Delta_{\F_{\IMV}(X)}   \boldsymbol{\pi}_x)$  
 and $  \boldsymbol{\pi}_{2,x}\restrict{\Theta^X}
 =\pi_1\circ(\nabla_{\F_{\IMV}(X)}   \boldsymbol{\pi}_x)$. Further,
  $\Delta_{\F_{\IMV}(X)}\boldsymbol{\pi}_x,\nabla_{\F_{\IMV}(X)} 
    \boldsymbol{\pi}_x\in \mathcal{C}(\F_{\IMV}(X))$.
 As a consequence,   $\eta$ is onto $\McN(\Theta^X)$.

 \medskip

From Claim 2 and Theorem~\ref{theorem:representation}, it follows that 
\[
\F_{\IMV}(X)\cong\mathcal{I}(\mathcal{C}(\F_{\IMV}(X)))
\cong \mathcal{I}(\McN(\Theta^X)), 
\]
whence we can write 
  $\mu\colon \F_{\IMV}(X)\cong \mathcal{I}(\McN(\Theta^X))$  for
  some one-one onto map.  For every $x\in X$,
$\mu(  \boldsymbol{\pi}_x)=
[\eta(\Delta_{\F_{\IMV}(X)}  \boldsymbol{\pi}_x),
\eta(\nabla_{\F_{\IMV}(X)}  \boldsymbol{\pi}_x)]
=[\pi_1\circ  \boldsymbol{\pi}_x,\pi_2\circ  \boldsymbol{\pi}_x].
$
\hfill{\qedhere}
\end{proof}

\medskip

   \subsection*{The product order in IMV-algebras}
  Just as every MV-algebra  $A$ has an underlying lattice structure that
  is definable from the monoidal structure of $A$,  also every IMV-algebra 
  has the following (product)  lattice order.

  \begin{proposition}
  \label{proposition:positional}
  Let $J$ be an IMV-algebra, identified with $\mathcal I(\mathcal C(J))$
  via the isomorphism  $\gamma_J$ of 
  Theorem \ref{theorem:representation}. For any two intervals
  $x,y\in J$  let us write  without fear of ambiguity $x=[\alpha,\beta],
  \,\,\,y=[\gamma,\delta]$ for uniquely determined elements
  $\alpha,\beta,\gamma,\delta$ of the MV-algebra $\mathcal C(J).$
  Let us define
  \begin{equation}
  \label{equation:central-order}
u \boldsymbol{\wedge} v =\neg(\neg u \odot v)\odot v
\,\,\,\,\mbox{  and  }\,\,\,\,
u \boldsymbol{\vee} v =\neg(\neg u \oplus v)\oplus v, \mbox{ for all }  u,v\in J.
  \end{equation}
We then have:
  \begin{itemize}
  \item[(i)] 
  Over the MV-algebra  $\mathcal C(J)$
the operations
  $\boldsymbol{\wedge,\vee}$  coincide with  the  lattice operations $\wedge,\vee$  of
  the
MV-algebra $\mathcal C(J).$
  
 \smallskip 
    \item[(ii)]
  Upon writing
  $
  x\sqcap y= [\alpha{\wedge} \gamma,\beta{\wedge} \delta]
  $
  and
  $
  x\sqcup y= [\alpha{\vee} \gamma,\beta{\vee} \delta]
  $,
  the algebra  
  \begin{equation}
  \label{equation:capcup}
  J^*=(J,0,1,\iii,\neg,\Delta,\nabla,\oplus,\odot,
  \sqcup,\sqcap)
  \end{equation}
has  a distributive lattice reduct  $(J,0,1,  \sqcup,\sqcap)$
  with largest  element $1$ and smallest $0$.

  \smallskip
  \item[(iii)]
  Denoting by $\sqsubseteq$ the resulting partial order on
  $J^*$, it follows that $\oplus$ and $\odot$ are monotone in both
arguments,  $\neg$ is order-reversing,   and
$\Delta x\sqsubseteq x \sqsubseteq \nabla x.$

 \smallskip 
  \item[(iv)]
  Generalizing the definition of the left-hand term of \eqref{equation:great}, let
 the binary IMV-term
 $\zeta(u,v)$   be defined by
 \begin{equation}
 \label{equation:zeta}
\zeta(u,v)=\Delta u\oplus(\iii\odot \nabla v \odot\neg \Delta u),
\,\,\,\mbox{ for all }  u,v\in J.
\end{equation}
Suppose  $u=[\alpha,\alpha]$ and
$v=[\beta,\beta]$ belong to    $\mathcal C(J)$,
 and $u\sqsubseteq v$,
 which in the present case is 
  equivalent to  $\alpha\leq \beta$ in the MV-algebra
  $\mathcal C(J)$. Then 
   $\Delta\zeta(u,v)=u$ and $\nabla\zeta(u,v)=v$, that is,
 $$\zeta(u,v)=[\alpha,\beta].$$
 
  \smallskip 
    \item[(v)] 
  The lattice operation $\sqcap$ of  $J^*$ is definable from the
  operations of $J$, by
  $$
  x\sqcap y =
  \zeta(\Delta x\boldsymbol{\wedge} \Delta y, \nabla x \boldsymbol{\wedge} \nabla y).
  $$

    \smallskip 
      \item[(vi)] 
  The lattice operation $\sqcup$ of  $J^*$ is definable from the
  operations of $J$, by
  $$
  x\sqcup y =
  \zeta(\Delta x\boldsymbol{\vee} \Delta y, \nabla x \boldsymbol{\vee} \nabla y).
  $$
  
      \smallskip 
      \item[(vii)]   For all $x,y\in J$ we have
      $x\sqsubseteq y$  iff $x\sqcup y=y$ iff
      $(\neg \Delta x\oplus \Delta y)\odot 
      (\neg \nabla x\oplus \nabla y)=1.$
 
  \end{itemize}
  \end{proposition}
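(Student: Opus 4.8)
The plan is to run everything through the representation Theorem~\ref{theorem:representation}: identifying $J$ with $\mathcal I(\mathcal C(J))$, every element is an interval $[\alpha,\beta]$ with $\alpha,\beta\in\mathcal C(J)$ and $\alpha\leq\beta$, and the operations act by the explicit rules $\neg[\alpha,\beta]=[\neg\beta,\neg\alpha]$, $[\alpha,\beta]\oplus[\gamma,\delta]=[\alpha\oplus\gamma,\beta\oplus\delta]$, $[\alpha,\beta]\odot[\gamma,\delta]=[\alpha\odot\gamma,\beta\odot\delta]$, $\Delta[\alpha,\beta]=[\alpha,\alpha]$, $\nabla[\alpha,\beta]=[\beta,\beta]$. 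Under this dictionary each assertion reduces to an identity or an order fact in the MV-algebra $\mathcal C(J)$. For (i) I would observe that on the degenerate intervals constituting $\mathcal C(J)$ the terms in \eqref{equation:central-order} are exactly the familiar MV-algebraic terms $\neg(\neg u\oplus v)\oplus v=(u\odot\neg v)\oplus v$ and its $\odot$-dual, which compute the MV join and meet; since $\mathcal C(J)$ is closed under $\neg,\oplus,\odot$, computing $u\boldsymbol{\vee}v$ and $u\boldsymbol{\wedge}v$ inside $J$ agrees with computing them inside $\mathcal C(J)$, yielding $u\vee v$ and $u\wedge v$.

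For (ii) and (iii) I would note that $\{[\alpha,\beta]\mid\alpha\leq\beta\}$ is closed under the coordinatewise meet $[\alpha\wedge\gamma,\beta\wedge\delta]$ and join $[\alpha\vee\gamma,\beta\vee\delta]$, since monotonicity of $\wedge,\vee$ keeps the endpoints ordered; hence $J^*$ is a sublattice of the product lattice $\mathcal C(J)\times\mathcal C(J)$, distributivity is inherited from the distributive lattice reduct of $\mathcal C(J)$, and $0=[0,0]$, $1=[1,1]$ are the extrema. The induced order $\sqsubseteq$ is the coordinatewise order ``$\alpha\leq\gamma$ and $\beta\leq\delta$'', so monotonicity of $\oplus,\odot$ and antitonicity of $\neg$ are inherited from $\mathcal C(J)$, while $\Delta[\alpha,\beta]=[\alpha,\alpha]\sqsubseteq[\alpha,\beta]\sqsubseteq[\beta,\beta]=\nabla[\alpha,\beta]$ follows from $\alpha\leq\beta$.

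The crux is (iv), which is the engine behind the definability claims (v) and (vi). Writing $u=[\alpha,\alpha]$, $v=[\beta,\beta]$ with $\alpha\leq\beta$, a direct evaluation of $\zeta(u,v)$ as in \eqref{equation:zeta} using the interval rules --- the very computation already carried out for \eqref{equation:great} --- gives $\iii\odot\nabla v\odot\neg\Delta u=[0,\beta\odot\neg\alpha]$ and hence $\zeta(u,v)=[\alpha,\alpha\oplus(\beta\odot\neg\alpha)]=[\alpha,\alpha\vee\beta]=[\alpha,\beta]$, using $\alpha\oplus(\beta\odot\neg\alpha)=\alpha\vee\beta$ and $\alpha\leq\beta$. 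With (iv) available, (v) and (vi) become formal: since $\Delta x,\Delta y,\nabla x,\nabla y$ lie in $\mathcal C(J)$, part (i) yields $\Delta x\boldsymbol{\wedge}\Delta y=[\alpha\wedge\gamma,\alpha\wedge\gamma]$ and $\nabla x\boldsymbol{\wedge}\nabla y=[\beta\wedge\delta,\beta\wedge\delta]$; these form a monotone pair because $\alpha\wedge\gamma\leq\beta\wedge\delta$, so (iv) reconstructs $\zeta(\Delta x\boldsymbol{\wedge}\Delta y,\nabla x\boldsymbol{\wedge}\nabla y)=[\alpha\wedge\gamma,\beta\wedge\delta]=x\sqcap y$, and symmetrically for $\sqcup$. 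What matters is that the right-hand sides are genuine IMV-terms, since $\boldsymbol{\wedge},\boldsymbol{\vee},\zeta,\Delta,\nabla$ are all built from the IMV-signature, so $\sqcap$ and $\sqcup$ are indeed definable.

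Finally, for (vii) the equivalence $x\sqsubseteq y$ iff $x\sqcup y=y$ is the standard description of the order of the lattice $J^*$ from (ii). For the last clause I would evaluate $(\neg\Delta x\oplus\Delta y)\odot(\neg\nabla x\oplus\nabla y)$ on intervals to obtain the degenerate interval with endpoint $(\neg\alpha\oplus\gamma)\odot(\neg\beta\oplus\delta)$; this equals $1$ in $\mathcal C(J)$ iff both factors equal $1$ (as $a\odot b=1$ forces $a=b=1$), i.e.\ iff $\alpha\leq\gamma$ and $\beta\leq\delta$ (using $\neg a\oplus b=1\iff a\leq b$), which is precisely $x\sqsubseteq y$. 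The main obstacle is not conceptual but lies in the bookkeeping around (iv): one must check that the endpoints produced by $\boldsymbol{\wedge},\boldsymbol{\vee}$ stay correctly ordered so the hypothesis of (iv) is met, and that these operations are invoked only on the center $\mathcal C(J)$, where by (i) they coincide with the MV-lattice operations, rather than on arbitrary elements of $J$, on which they compute something altogether different.
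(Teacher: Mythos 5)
Your proposal is correct, and it is exactly the verification the paper has in mind: the paper's own proof of this proposition is simply ``Routine,'' i.e.\ the coordinatewise checking through the identification $J\cong\mathcal I(\mathcal C(J))$ that you carry out in full (including the key computation $\zeta([\alpha,\alpha],[\beta,\beta])=[\alpha,\alpha\oplus(\beta\odot\neg\alpha)]=[\alpha,\beta]$, which mirrors the paper's verification of equation \eqref{equation:great}). Your attention to the bookkeeping in (iv)--(vi) --- that $\boldsymbol{\wedge},\boldsymbol{\vee}$ are applied only to elements of the center, where they agree with the MV-lattice operations, and that the resulting endpoints remain a monotone pair --- is precisely the content hidden behind the word ``routine.''
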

  
  \begin{proof}
  Routine.
  \end{proof}
  
  \begin{remark}
  In the IMV-algebra  $J$ the  operations
  $\boldsymbol{\wedge}$
    defined 
     in \eqref{equation:central-order}
     and 
     $\sqcap$
     defined  in  (v) above
     do not coincide in general.
     Similarly,    $\boldsymbol{\vee}$ 
     need not coincide with~$\sqcup$.
  \end{remark}

\medskip
In  \cite{gehwal} the authors  study  the algebra
(denoted  $I^{[2]}$)  given by the
$(0,1,\Delta,\nabla,\sqcup,\sqcap)$-reduct of
  $(\mathcal I(\I))^*$
   as defined in \eqref{equation:capcup}. Thus in $I^{[2]}$, 
  $[\alpha,\beta] \sqsubseteq  [\alpha',\beta']$ means  $\alpha\leq \alpha'$
and $\beta\leq \beta'$. 
A commutative
associative  operation
$\bigcirc \colon \mathcal I(\I)\times 
 \mathcal I(\I) \to \mathcal I(\I)$ is said to be  
 a {\it t-norm} if it satisfies the following conditions:
 
\medskip
 \begin{itemize}
 \item[(a)]  $\mathcal C(\mathcal I(\I))\bigcirc \mathcal C(\mathcal I(\I))
 \subseteq \mathcal C(\mathcal I(\I))$,
 
\medskip
 \item[(b)]  $\bigcirc $ distributes over the   
 lattice operations  $\sqcup,\sqcap$  of   $I^{[2]}$,
 
\medskip
 \item[(c)]    for all $x\in   \mathcal I(\I)$,
 $1\bigcirc x=x,\,\,\, \iii\bigcirc [\alpha,\beta]=[0,\beta].$
 \end{itemize}

\medskip
\noindent
In addition,  $\bigcirc$  is said to be {\it convex}  if 
 whenever  $x\in  \mathcal I(\I)$  satisfies $\alpha\bigcirc \alpha'
  \sqsubseteq x \sqsubseteq
 \beta\bigcirc\beta'$,  it follows that
 $x=\epsilon \bigcirc\delta$ for some
 $\alpha\leq \epsilon \leq \beta$  and
 $\alpha'\leq \delta\leq  \beta'.$
 
 \begin{theorem}
The $\odot$ operation of  $\mathcal I(\I)$
 equips the lattice
$I^{[2]}$ with a convex t-norm in the sense of \cite{gehwal}.
\end{theorem}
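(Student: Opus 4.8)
The plan is to transport the whole statement to the concrete coordinate description of $\mathcal I(\I)$ furnished by Theorem~\ref{theorem:hsp}: via the isomorphism $\omega$ of \eqref{equation:omega} an interval is identified with the pair of its endpoints and $\odot$ acts coordinatewise, $[a,b]\odot[c,d]=[a\odot c,\,b\odot d]$, while by Proposition~\ref{proposition:positional}(ii) the lattice operations of $I^{[2]}$ are $[a,b]\sqcap[c,d]=[a\wedge c,\,b\wedge d]$ and $[a,b]\sqcup[c,d]=[a\vee c,\,b\vee d]$. With these formulas in hand, the commutativity and associativity required of a t-norm, together with condition (a), are read off in each coordinate from the corresponding properties of $\odot$ in the standard MV-algebra $\I$; in particular degenerate intervals are closed under $\odot$ because $[a,a]\odot[c,c]=[a\odot c,\,a\odot c]$, giving (a).

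Condition (b) reduces to the two identities $a\odot(c\vee e)=(a\odot c)\vee(a\odot e)$ and $a\odot(c\wedge e)=(a\odot c)\wedge(a\odot e)$, i.e.\ to the distributivity of $\odot$ over $\vee$ and $\wedge$ in $\I$ \cite[Proposition~1.1.6]{cigdotmun}, applied separately to the two endpoints. Condition (c) is the coordinatewise computation $[1,1]\odot[\alpha,\beta]=[1\odot\alpha,\,1\odot\beta]=[\alpha,\beta]$ and $\iii\odot[\alpha,\beta]=[0,1]\odot[\alpha,\beta]=[0\odot\alpha,\,1\odot\beta]=[0,\beta]$, using $1\odot t=t$ and $0\odot t=0$ in $\I$. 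All of this is mechanical.

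The only substantial point is convexity. Writing $x=[p,q]$ and reading the hypothesis $\alpha\odot\alpha'\sqsubseteq x\sqsubseteq\beta\odot\beta'$ in coordinates (the outer terms being the degenerate intervals $[\alpha\odot\alpha',\,\alpha\odot\alpha']$ and $[\beta\odot\beta',\,\beta\odot\beta']$), it becomes the chain $\alpha\odot\alpha'\leq p\leq q\leq\beta\odot\beta'$ in $\I$. I must produce $\epsilon=[e_1,e_2]$ and $\delta=[d_1,d_2]$ with $\alpha\leq e_1\leq e_2\leq\beta$, $\alpha'\leq d_1\leq d_2\leq\beta'$ and $e_1\odot d_1=p$, $e_2\odot d_2=q$, for then $\epsilon\odot\delta=[e_1\odot d_1,\,e_2\odot d_2]=x$. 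The idea is to traverse the straight segment from $(\alpha,\alpha')$ to $(\beta,\beta')$ inside the box $[\alpha,\beta]\times[\alpha',\beta']$: setting $\phi(t)=\big((1-t)\alpha+t\beta\big)\odot\big((1-t)\alpha'+t\beta'\big)$ for $t\in[0,1]$ yields a continuous function that is nondecreasing (both linear interpolants are nondecreasing in $t$ and $\odot$ is isotone on $\I$), with $\phi(0)=\alpha\odot\alpha'$ and $\phi(1)=\beta\odot\beta'$. Since $p,q$ lie in this range, the intermediate value theorem produces $t_1,t_2$ with $\phi(t_1)=p$, $\phi(t_2)=q$, and monotonicity of $\phi$ together with $p\leq q$ lets me take $t_1\leq t_2$; the induced endpoints $e_i,d_i$ are then automatically ordered and lie in the required boxes.

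The main obstacle is exactly this last construction: the hypothesis only bounds the \emph{products} $\alpha\odot\alpha'\leq p$ and $q\leq\beta\odot\beta'$, whereas convexity demands preimages whose endpoints are simultaneously ordered $e_1\leq e_2$ and $d_1\leq d_2$, and it is the monotone-path/IVT device that forces both orderings at once. One could instead invoke the explicit \luk\ form $s\odot t=(s+t-1)\vee 0$ and write $e_i,d_i$ by hand, but the coordinate-free path argument is cleaner. Everything else is routine verification in $\I$, whence $\odot$ is a convex t-norm on $I^{[2]}$ in the sense of \cite{gehwal}.
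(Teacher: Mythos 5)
Your proof is correct, and your verifications of the t-norm axioms (a)--(c) are exactly what the paper intends; but for the crux --- convexity --- you take a genuinely different route. The paper's entire proof is ``Use Propositions \ref{proposition:riesz} and \ref{proposition:positional}'': Proposition \ref{proposition:positional} supplies the coordinatewise lattice and t-norm computations of your first two paragraphs, while convexity is meant to come from Proposition \ref{proposition:riesz}, i.e.\ from the Riesz-decomposition fact that the pointwise (Minkowski) image $\{u\odot v\mid u\in[\alpha,\beta],\ v\in[\alpha',\beta']\}$ is exactly the interval $[\alpha\odot\alpha',\,\beta\odot\beta']$. Applying that fact \emph{twice} resolves the ordering obstacle you correctly isolate, with no analysis at all: first choose $e_2\in[\alpha,\beta]$ and $d_2\in[\alpha',\beta']$ with $e_2\odot d_2=q$; then, since $p$ lies in $[\alpha\odot\alpha',\,e_2\odot d_2]$, which by Proposition \ref{proposition:riesz} is the Minkowski product of $[\alpha,e_2]$ and $[\alpha',d_2]$, choose $e_1\in[\alpha,e_2]$ and $d_1\in[\alpha',d_2]$ with $e_1\odot d_1=p$; the constraints $e_1\leq e_2$ and $d_1\leq d_2$ then hold by construction, and $[e_1,e_2]\odot[d_1,d_2]=[p,q]$. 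Your monotone-path/IVT device solves the same problem but leans on the topology and real convex structure of $[0,1]$ --- continuity of $\odot$, linear interpolation, the intermediate value theorem --- so it is tied to the standard algebra $\I$, whereas the Riesz route is purely algebraic and would prove the statement for $\mathcal I(A)$ over an arbitrary MV-algebra $A$; for the theorem as stated (which concerns only $\mathcal I(\I)$) both are adequate. One point you should make explicit: your argument, and indeed the truth of the theorem, requires reading the convexity definition with $\alpha\leq\beta$ and $\alpha'\leq\beta'$ (otherwise no $\epsilon$ with $\alpha\leq\epsilon\leq\beta$ can exist, and your interpolants need not be nondecreasing); you use this hypothesis silently when you assert monotonicity of $\phi$.
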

\begin{proof}
 Use  Propositions \ref{proposition:riesz} and
 \ref{proposition:positional}.
\end{proof}

\subsection*{The inclusion order in IMV-algebras}
Beyond the  lattice order  $\sqcap,\sqcup$,
every IMV-algebra  $J$ is equipped with
a partial order relation  $\subseteq,$
given by the inclusion relation between intervals
of  $J=\mathcal I(\mathcal C(J)).$
Notwithstanding the categorical equivalence
between IMV-algebras and MV-algebras, the
inclusion order has no role in MV-algebras.

  \begin{proposition}
  \label{proposition:inclusion}
  Adopt the  hypotheses and notation of
Proposition
  \ref{proposition:positional}.
  We then have:
  \begin{itemize}
  \item[(i)]  
  Upon writing
  $
  x\cup y= [\alpha\wedge \gamma,\beta\vee \delta],
  $
  the algebra  
  $$J^{**}=(J,0,1,\iii,\neg,\Delta,\nabla, \oplus, \odot, \cup)$$
  becomes  a sup-semilattice  
  with maximum element $\iii$,  whose set of minimal elements
coincides with the center of $J$.

  \smallskip
  \item[(iii)]
  Denoting by $\subseteq$ the resulting partial order on
  $J^{**}$, it follows that $\oplus$ and $\odot$ are monotone in both
arguments,  $\neg$ is monotone,   and
$\Delta x\subseteq x \supseteq \nabla x.$

 \smallskip 
  \item[(iv)] The sup-semilattice operation  $\cup$ of 
    $J^{**}$ is definable  from the
  operations of $J$, by
  $
x\cup y =  \zeta (\Delta x \wedge \Delta y, \nabla x\vee \nabla y)
=
  [\Delta x \wedge \Delta y, \nabla x \vee \nabla y].
  $

    \smallskip 
      \item[(vi)]  For all $x,y\in J$ we have
      $x\subseteq y$  iff $x\cup y=y$ iff
       $(\Delta y\leq \Delta x)\mbox{ and }(\nabla x\leq \nabla y)$
       in the MV-algebra  $\mathcal C(J)$
       iff 
       $ (\neg\Delta y\oplus \Delta x)\odot(\neg\nabla x\oplus \nabla y)=1$
       in $J$.
 
  \end{itemize}
 \end{proposition}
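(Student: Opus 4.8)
The plan is to invoke the Representation Theorem~\ref{theorem:representation} and argue throughout inside $\mathcal I(\mathcal C(J))$, where every element is a genuine interval $x=[\alpha,\beta]$, $y=[\gamma,\delta]$ in the MV-algebra $\mathcal C(J)$ and each operation has an explicit endpoint-wise form: by (the proof of) Proposition~\ref{proposition:riesz} one has $[\alpha,\beta]\oplus[\gamma,\delta]=[\alpha\oplus\gamma,\beta\oplus\delta]$ and $[\alpha,\beta]\odot[\gamma,\delta]=[\alpha\odot\gamma,\beta\odot\delta]$, while by definition $\neg[\alpha,\beta]=[\neg\beta,\neg\alpha]$, $\Delta[\alpha,\beta]=[\alpha,\alpha]$, $\nabla[\alpha,\beta]=[\beta,\beta]$ and $\iii=[0,1]$. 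Under this dictionary the proposed operation is $x\cup y=[\alpha\wedge\gamma,\beta\vee\delta]$, which is a genuine interval because $\alpha\wedge\gamma\le\alpha\le\beta\le\beta\vee\delta$. With this in hand every assertion reduces to a short computation with the distributive lattice operations $\wedge,\vee$ of $\mathcal C(J)$.

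For (i) and (iii), idempotence, commutativity and associativity of $\cup$ are inherited verbatim from the lattice laws of $\wedge,\vee$, so $J^{**}$ is a sup-semilattice; the induced order, defined by $x\cup y=y$, unwinds to $[\alpha\wedge\gamma,\beta\vee\delta]=[\gamma,\delta]$, i.e.\ $\gamma\le\alpha$ and $\beta\le\delta$, which is precisely interval inclusion $[\alpha,\beta]\subseteq[\gamma,\delta]$. The element $\iii=[0,1]$ is absorbing for $\cup$, hence the top; and since the only intervals containing no strictly smaller interval are the degenerate ones, the minimal elements are exactly the elements of the center (identified with the degenerate intervals in Proposition~\ref{proposition:varia}(i)). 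Monotonicity is read off the endpoint formulas: if $x\subseteq x'$ and $y\subseteq y'$ then the left endpoints of $x\oplus y,x\odot y$ dominate those of $x'\oplus y',x'\odot y'$ while the right endpoints are dominated, giving $x\oplus y\subseteq x'\oplus y'$ and $x\odot y\subseteq x'\odot y'$; for $\neg$, passing to $[\neg\beta,\neg\alpha]$ both swaps and reverses the endpoints, so inclusions are preserved rather than reversed, whence $\neg$ is monotone for $\subseteq$. Finally $\Delta x=[\alpha,\alpha]$ and $\nabla x=[\beta,\beta]$ are both contained in $x=[\alpha,\beta]$, giving $\Delta x\subseteq x\supseteq\nabla x$.

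For the definability statement (iv), observe first that $\Delta x\boldsymbol{\wedge}\Delta y$ and $\nabla x\boldsymbol{\vee}\nabla y$ lie in the center: the center is closed under $\neg,\oplus,\odot$ (Proposition~\ref{proposition:varia}(ii)) and hence under the IMV-terms $\boldsymbol{\wedge},\boldsymbol{\vee}$ of \eqref{equation:central-order}, which by Proposition~\ref{proposition:positional}(i) compute $\wedge,\vee$ there; explicitly they are the degenerate intervals $[\alpha\wedge\gamma,\alpha\wedge\gamma]$ and $[\beta\vee\delta,\beta\vee\delta]$, and $\alpha\wedge\gamma\le\beta\vee\delta$. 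Thus Proposition~\ref{proposition:positional}(iv) applies and yields $\zeta(\Delta x\boldsymbol{\wedge}\Delta y,\nabla x\boldsymbol{\vee}\nabla y)=[\alpha\wedge\gamma,\beta\vee\delta]=x\cup y$, exhibiting $\cup$ as an IMV-term. For (vi), the equivalence of $x\subseteq y$ with $x\cup y=y$ is the definition of $\subseteq$, and the computation in (i) already rewrote this as $\gamma\le\alpha$ and $\beta\le\delta$, i.e.\ $\Delta y\le\Delta x$ and $\nabla x\le\nabla y$ in $\mathcal C(J)$. The last reformulation uses two standard MV-algebra facts, applied to the central (hence MV-algebra) elements $\Delta x,\Delta y,\nabla x,\nabla y$ and the central constant $1=[1,1]$: namely $\neg a\oplus b=1$ iff $a\le b$, and $p\odot q=1$ iff $p=q=1$; together they give that $(\neg\Delta y\oplus\Delta x)\odot(\neg\nabla x\oplus\nabla y)=1$ holds in $J$ iff both $\Delta y\le\Delta x$ and $\nabla x\le\nabla y$.

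The argument is essentially bookkeeping once the representation is in place, so the only genuinely nonroutine point is the appeal to Proposition~\ref{proposition:positional}(iv) in part (iv): before $\zeta$ can be guaranteed to reproduce the interval with the prescribed endpoints, one must verify that its two arguments are central and comparable in the product order $\sqsubseteq$. I expect checking these hypotheses---rather than any hard computation---to be the main (mild) obstacle; everything else follows from the distributive-lattice identities of $\mathcal C(J)$ and the two elementary MV-algebra equivalences quoted above.
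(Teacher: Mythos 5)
Your proof is correct and takes essentially the same approach the paper intends: the paper's own proof of this proposition is the single line ``From Proposition~\ref{proposition:positional}'', and your argument is exactly the routine endpoint verification inside $\mathcal I(\mathcal C(J))$ that this citation compresses, including the two hypotheses (centrality and comparability of the arguments of $\zeta$) needed to invoke Proposition~\ref{proposition:positional}(iv).
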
 
 
 \begin{proof}
 From  Proposition  \ref{proposition:positional}.
 \end{proof}
 
 \begin{remark}
Going back to the outset of  this paper, it is interesting
to note  that  IMV-algebras, while
categorically equivalent to MV-algebras,
can express the fundamental inclusion order  $u\subseteq v$
between actual measurements/estimations $u,v$, 
meaning that $u$ is more precise than $v$.  Within
the MV-algebraic framework, the inclusion order has no meaning,
because truth-values in \luk\ logic are real numbers, corresponding
to error-free normalized measurements. 
 \end{remark}

\bigskip

%%%%%%%%%%%%%%%%%%%%%%%%%%%%%%%%%%
\section{The equational theory of IMV-algebras is coNP-complete}
%%%%%%%%%%%%%%%%%%%%%%%%%%%%%%%%%
We refer to 
\cite{garjoh} for algorithmic complexity theory.

Let $\mathcal X$ be a countable set, whose elements are
called {\it variables}. 
For any finite subset $X=\{X_1,\dots, X_n\}$ of $X$ we denote
by MV$(X)$  (resp., IMV(X)) the set of MV-terms  (resp., IMV-terms)
 $\omega$ such that all variables
occurring in $\omega$ belong to  $X$. Letting
$X$ range over all finite subsets of $\mathcal X$ we obtain
the set  MV$(\mathcal X)$
(resp., IMV$(\mathcal X)$)  of MV-terms  (resp., IMV-terms)
over $\mathcal X$.  
The {\it equational theory}  Id$_{\rm MV(\mathcal X)}$
of MV-algebras
(resp., Id$_{\rm IMV(\mathcal X)}$ of IMV-al\-ge\-b\-ras) is the
set of all equations over $\mathcal X$ satisfied by every MV-algebras
(resp., every IMV-algebra).
In the jargon of algorithmic complexity theory, the set 
Id$_{\rm MV(\mathcal X)}$ (resp., Id$_{\rm IMV(\mathcal X)}$)
 amounts to the following   ``problem'': 

\medskip
\noindent ${\mathsf{INSTANCE}:}$   Two  MV-terms  (resp., two IMV-terms) 
$\sigma_1$ and $\sigma_2$ in the variables $X_1,\ldots,X_n,\,\,\,(n=1,2,\dots)$.

 \smallskip
\noindent ${\mathsf{QUESTION}:}$ Does the equation
$\sigma_1=\sigma_2$ hold in every MV-algebra  (resp., in every IMV-algebra)?

\bigskip
The equational theory of MV-algebras
has the same algorithmic complexity
as the set of tautologies (in  the variables of $\mathcal X$)  of infinite-valued
\luk\ logic, whence it 
is {\it coNP-complete} by  \cite[Theorem~9.3.8]{cigdotmun}.
A similar result holds   for IMV-algebras.

 \begin{theorem}
 \label{theorem:equational}
The equational theory 
${\rm Id}_{\rm IMV(\mathcal X)}$ 
 is coNP-complete.
 \end{theorem}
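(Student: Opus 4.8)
The plan is to establish coNP-completeness in two parts: coNP-hardness and membership in coNP. For \textbf{hardness}, I would reduce from the already-known coNP-completeness of the equational theory ${\rm Id}_{\rm MV(\mathcal X)}$ of MV-algebras. Given any MV-equation $\sigma_1=\sigma_2$, the idea is to translate each MV-term into an IMV-term and exploit the categorical equivalence of Theorem~\ref{theorem:equivalence}. The cleanest route uses the degenerate-interval embedding $\iota$: an MV-equation holds in every MV-algebra iff it holds in $\I$ iff the corresponding ``collapsed'' equation holds on degenerate intervals of $\mathcal I(\I)$. Concretely, I would wrap each variable $X_i$ occurring in $\sigma_1,\sigma_2$ with $\Delta$ (forcing it into the center $\mathcal C$) to obtain IMV-terms $\sigma_1^\Delta,\sigma_2^\Delta$. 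By Proposition~\ref{proposition:varia}(ii), the center $\mathcal C(J)$ is an MV-algebra, and by \eqref{equation:deltaoplus}, \eqref{equation:deltaodot}, \eqref{equation:nabla} the IMV-operations restrict to the MV-operations on central elements; hence $\sigma_1^\Delta=\sigma_2^\Delta$ holds in all IMV-algebras iff $\sigma_1=\sigma_2$ holds in all MV-algebras. Since this reduction is clearly polynomial-time, coNP-hardness transfers.

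For \textbf{membership in coNP}, I would use the equational completeness Theorem~\ref{theorem:hsp}: an IMV-equation $\sigma_1=\sigma_2$ holds in all IMV-algebras iff it holds in $\boldsymbol{\Theta}\cong\mathcal I(\I)$, where by the explicit formulas in Theorem~\ref{theorem:hsp} each IMV-operation acts coordinatewise on pairs $(a,b)\in\Theta\subseteq\I^2$ via the standard \luk\ operations on $\I$ (with $\neg$ swapping and negating the coordinates, and $\Delta,\nabla$ collapsing to one coordinate). The key observation is that evaluating an IMV-term at a point of $\Theta$ reduces to evaluating \emph{two} MV-terms on $\I$: tracking the two coordinates separately, each IMV-term $\tau$ induces a pair $(\pi_1\circ\tau,\pi_2\circ\tau)$ of McNaughton functions, exactly as established in Claim~1 of the proof of Theorem~\ref{theorem:free}. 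So the IMV-equation $\sigma_1=\sigma_2$ fails in $\mathcal I(\I)$ iff there is a point where either the first coordinates disagree or the second coordinates disagree. This gives a certificate: a failing assignment in $\Theta^n$, i.e.\ a rational point whose coordinates satisfy $\alpha_i\le\beta_i$.

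To turn this into a genuine coNP algorithm, I would verify the \textbf{small-model / small-witness property}: if a failing assignment exists, one exists with rational coordinates of polynomially bounded bit-size. This follows from the corresponding fact for \luk\ logic (the coNP-completeness of ${\rm Id}_{\rm MV}$ via \cite[Theorem~9.3.8]{cigdotmun} rests on exactly such a bound for McNaughton functions), applied to the two coordinate MV-terms, together with the linear constraint $\alpha_i\le\beta_i$ cutting out $\Theta^n$ from $\I^{2n}$. A nondeterministic polynomial-time machine for the \emph{complement} guesses such a point and checks in polynomial time that it lies in $\Theta^n$ and that $\sigma_1,\sigma_2$ evaluate differently there; hence the complement is in NP and ${\rm Id}_{\rm IMV(\mathcal X)}\in{\rm coNP}$.

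The \textbf{main obstacle} is the small-witness bound for the coNP-membership direction. One must be careful that restricting the domain to the triangle $\Theta$ (rather than the full square $\I^2$) does not destroy the polynomial bit-size guarantee. I expect this to go through because the restriction is a single linear inequality with integer coefficients, so the region where two McNaughton functions differ on $\Theta$ is again a finite union of rational polytopes defined by integer-coefficient inequalities of polynomial size, and a standard vertex/bit-size argument yields a rational witness of polynomial length. Everything else---the coordinatewise decomposition and the polynomial reduction for hardness---is routine given Theorems~\ref{theorem:hsp} and~\ref{theorem:free} and the known complexity of \luk\ logic.
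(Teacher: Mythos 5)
Your proposal is correct, and its hardness half is essentially the paper's own argument: the paper too wraps every variable in $\Delta$ (after first normalizing $\sigma_2$ to $0$ via Chang's distance function) and justifies the reduction by the same chain of equivalences through $\I$, the center of $\mathcal I(\I)$, and \eqref{equation:generators}. Where you genuinely diverge is the membership half. The paper never argues semantically on $\Theta^n$: it splits $\omega=\sigma$ into the two legs $\Delta\omega=\Delta\sigma$ and $\nabla\omega=\nabla\sigma$ via the quasiequation \eqref{equation:quasi}, rewrites each leg in a normal form $\omega^{\Delta}(Y_1,\dots,Y_n,Z_1,\dots,Z_n)$ that is readable as an MV-term --- this syntactic rewriting \eqref{equation:normalform} is exactly the term-level counterpart of your coordinate functions $\pi_1\circ\tau$, $\pi_2\circ\tau$ from Claim~1 of Theorem~\ref{theorem:free} --- and then reduces to the consequence problem $Y_1\leq Z_1,\dots,Y_n\leq Z_n\vdash \omega^{\Delta}=\sigma^{\Delta}$ in \luk\ logic, quoting \cite[Theorem~18.3]{mun11} and \cite[Theorem~9.3.4]{cigdotmun} for the coNP membership of that problem. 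Your route unfolds that citation into a direct argument: the small-witness property you isolate (a rational point of polynomially bounded bit-size in $\Theta^n$ witnessing disagreement of the coordinate McNaughton functions) is precisely the technical content of the cited theorems, extended, as you correctly observe, by the harmless integer-coefficient constraints $Y_i\leq Z_i$ cutting $\Theta^n$ out of $\I^{2n}$; the standard cell-decomposition/vertex argument tolerates these extra inequalities without affecting the bit-size bound. What the paper's packaging buys is brevity and reuse --- the same normal-form transformation and the \luk\ consequence relation are recycled in Section~\ref{section:logic} (Theorem~\ref{theorem:local} and Corollary~\ref{corollary:conp}); what your packaging buys is self-containedness, since you never pass through the consequence problem as an intermediate, at the price of having to prove the vertex/bit-size lemma rather than quote it. Both are sound; just make sure, if you write yours up in full, to state the coefficient bound for the linear pieces of the coordinate functions explicitly (it grows at most linearly under $\oplus$, $\odot$, $\neg$, $\Delta$, $\nabla$), since that is what makes the guessed witness verifiable in polynomial time.
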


 \begin{proof}
 We first describe a polytime reduction  $\psi\colon
 (\sigma_1,\sigma_2)\mapsto (\sigma'_1,\sigma'_2)$
  of   the equational theory
  Id$_{\rm MV(\mathcal X)}$
 of MV-algebras  to the equational theory
 Id$_{\rm IMV(\mathcal X)}$. Using
  Chang's distance function, \cite[\S 1]{cigdotmun}, 
 it is no loss of generality to assume   $\sigma_2=0.$  So 
 $\psi$  will transform any given  MV-term  
 $\sigma_1=\sigma(X_1,\dots,X_n)$
  into an
 IMV-term  $\sigma'$  such that $\mathsf{MV}\models \sigma=0$
 iff $\mathsf{IMV}\models \sigma'=0$.  Here, as usual,
 the symbol $\models$  stands for the tarskian  satisfaction relation
  of first-order logic, \cite[pp. 71 and 195]{bursan}.  Let $\I$ be the
 standard MV-algebra.
  We have the following equivalences
 \begin{eqnarray*}
&&\mathsf{MV}\models \sigma(X_1,\dots,X_n)=0\\
 &\Leftrightarrow&  \I\models \sigma(X_1,\dots,X_n)=0, \mbox{ by \cite[Theorem~2.5.3]{cigdotmun}}\\
  &\Leftrightarrow&\mathcal I(\I)\models \forall X_1\dots\forall X_n\in \mathcal C(\mathcal I(\I))\,\, \sigma(X_1,\dots,X_n)=0, \mbox{ by  Theorem \ref{theorem:equivalence} }\\
    &\Leftrightarrow&\mathcal I(\I)\models \forall X_1\dots\forall X_n\in \mathcal I(\I)\,\, \sigma(\Delta X_1,\dots,\Delta X_n)=0, 
    \mbox{ by Prop.~\ref{proposition:varia}(i)}\\
 &\Leftrightarrow& \mathsf{IMV}\models \sigma(\Delta X_1,\dots,	\Delta X_n)=0,
 \mbox{ because} \,\,\, \IMV=\mathbb{HSP}(\mathcal I([0,1])), \mbox{ by }\eqref{equation:generators}. 
 \end{eqnarray*}
 Letting  $\sigma' =  \sigma(\Delta X_1,\dots,	\Delta X_n)$, a direct inspection shows
 that the map $\sigma\mapsto \sigma'$  can be computed in deterministic polynomial time.
This provides the desired reduction  $\psi$ of 
Id$_{\rm MV(\mathcal X)}$  to Id$_{\rm IMV(\mathcal X)}$. 
As already noted,
 the equational theory of MV-algebras is coNP-complete, whence
 the equational theory of IMV-algebras is coNP-hard. 
 
\smallskip
In order to prove that  Id$_{\rm IMV(\mathcal X)}$  is in coNP
we will construct a polytime reduction
$\chi$  of 
 Id$_{\rm IMV(\mathcal X)}$ to  the consequence problem
 in \luk\ logic, \cite[\S 18]{mun11}.
 Given an equation  $\omega=\sigma,$  for IMV-terms
$\omega, \sigma$ in the variables $X_1,\dots,X_n, \,\,\,
(n=1,2,\dots),$
we prepare   
 $2n$  new variables  $Y_1,\dots,Y_n,Z_1,\dots, Z_n,$
and  write down the following equivalences:
 \begin{eqnarray*}
 \mathsf{IMV}\models  \omega=\sigma  &\Leftrightarrow&\mathcal I(\I)\models  \omega=\sigma, \mbox{  by    \eqref{equation:generators}}\\
     &\Leftrightarrow&\mathcal I(\I)\models \Delta\omega=\Delta\sigma\,\, \mbox{ and }\,\,
    \mathcal I(\I)\models \nabla\omega=\nabla\sigma,  \mbox{ by \eqref{equation:quasi}}.
  \end{eqnarray*}

We now focus on the first leg 
\begin{equation}
\label{equation:instance}
\mathcal I(\I)\models \Delta\omega=\Delta\sigma
\end{equation}
of the last equivalence.
 
In every IMV-algebra,
repeated application of  the equations
\eqref{equation:associative}--\eqref{equation:great}  
shows that
the IMV-term
  $\Delta\omega(X_1,\dots,X_n)$
  can be transformed in polytime into
an  equivalent  ``normal form''
   IMV-term
$
  \omega'(\Delta X_1,\dots,\Delta X_n,\nabla X_1,\dots,\nabla X_n),
$
  where the constant $\iii$ does not appear, the
  $\Delta,\nabla$ symbols only occur immediately before variables, and the total
  number of symbols in $\omega'$ is proportional to that of $\omega.$  
Replacing in $\omega'$  every occurrence of  $\Delta X_i$
  by the variable $Y_i$ and every occurrence of $\nabla X_i$
  by $Z_i$\,, we obtain the IMV-term  
  \begin{equation}
  \label{equation:normalform}
  \omega^{\Delta}(Y_1,\dots,Y_n,Z_1,\dots,Z_n)
  \end{equation}
 {\it which is also readable as an MV-term}. Similarly there is
 a polytime transformation  
   \begin{equation}
  \label{equation:normalform-bis}
  \sigma(X_1,\dots,X_n) \mapsto \sigma^{\Delta}(Y_1,\dots,Y_n,Z_1,\dots,Z_n).
    \end{equation}
  By construction, 
   \begin{eqnarray*}
  \mathcal I(\I)\models \Delta\omega=\Delta\sigma
      &\Leftrightarrow&\mathcal I(\I)\models 
      \omega' =
      \sigma' \mbox{ for all } X_1,\dots,X_n\\
      &\Leftrightarrow& \I\models \omega^{\Delta}  =\sigma^{\Delta}
      	\mbox{ whenever }Y_1\leq Z_1,\dots,
      Y_n\leq Z_n\\
         &\Leftrightarrow&Y_1\leq Z_1,\dots,
      Y_n\leq Z_n\vdash_{ {\mbox \L}_\infty} \omega^{\Delta}  =\sigma^{\Delta},   
     \end{eqnarray*} 
     where  $\vdash_{ {\mbox \L}_\infty}$ 
     denotes  consequence in \luk\ logic.
     
     \smallskip
     Turning to the second leg
     $
     \mathcal I(\I)\models \nabla\omega=\nabla\sigma,
     $
      we similarly reduce it to an equivalent instance
     $
     Y_1\leq Z_1,\dots,
      Y_n\leq Z_n\vdash_{ {\mbox \L}_\infty} \omega^{\nabla}  =\sigma^{\nabla}
     $
     of the consequence problem in \luk\ logic, for suitable
    polytime computable  MV-terms  $ \omega^{\nabla}$ and $\sigma^{\nabla}$.
     
     \smallskip
     
In conclusion,  the map
$
\chi\colon (\omega,\sigma)\mapsto 
(Y_1, Z_1,\dots,
      Y_n, Z_n,  \omega^{\Delta}, \sigma^{\Delta}, \omega^{\nabla}, \sigma^{\nabla})
      $
determines    a polytime reduction of
       Id$_{\rm IMV(\mathcal X)}$ to the consequence problem in \luk\ logic.  
A refinement of the proof
of \cite[Theorem~18.3]{mun11} in the light of 
 \cite[Theorem 9.3.4]{cigdotmun} shows that   this latter problem is in coNP,
 whence so is the problem   Id$_{\rm IMV(\mathcal X)}$.
      \end{proof}

 %%%%%%%%%%%%%%%%%%%%%%%%%%%%%%%%%%
 \section{Tautologies and consequence in \luk\ interval logic}
 \label{section:logic}
 %%%%%%%%%%%%%%%%%%%%%%%%%%%%%%%%%%%
The proof of Theorem \ref{theorem:hsp} 
routinely yields a transformation
of IMV-equational logic into a deductive algorithmic
machinery on IMV-terms, which we will call ``\luk\ interval logic''.
One  first says that an IMV-term $\sigma(X_1,\dots,X_n)$
is an {\it IMV-tautology} if the equation $\sigma=1$ is
satisfied by all IMV-algebras. We call $\sigma$
an {\it $\mathcal I(\I)$-tautology} if the
 IMV-algebra  $\mathcal I(\I)$ satisfies equation
$\sigma=1$.
By Theorem \ref{theorem:hsp},  IMV-tautologies 
coincide with $\mathcal I(\I)$-tautologies, and will
be called {\it tautologies} without fear of confusion.

Let  $\mathsf{FORM}(X_1,\dots,X_n)$ be
the (absolutely free) algebra of IMV-terms
in the variables $X_1,\dots,X_n$.
Given   $\theta_1,\dots,\theta_m,\psi\in \mathsf{FORM}(X_1,\dots,X_n)$
 we say
that $\psi$ is a {\it consequence }  of
$\theta_1,\dots,\theta_m$, in symbols,
$$
\theta_1,\dots,\theta_m\vdash_{\rm IMV}\psi
$$
if  every homomorphism
(valuation, evaluation, truth-value assignment, interpretation, model)  
$
\eta\colon \mathsf{FORM}(X_1,\dots,X_n)\to \mathcal I(\I)
$ 
that evaluates to 1 each $\theta_i$, also satisfies
$\eta(\psi)=1.$
More generally, when $\Phi$ is an infinite set of
IMV-terms, we write $\Phi\vdash_{\rm IMV}\psi$
iff  $\Phi'\vdash_{\rm IMV}\psi$ for some finite
$\Phi'\subseteq \Phi.$ 
In particular, the notation
\begin{equation} 
\label{equation:tautology}
\emptyset \vdash_{\rm IMV}\psi
\end{equation}
precisely means that  $\psi$ is a tautology.

In \luk\ interval logic one can reduce the consequence
problem to the tautology problem, because of the
following counterpart of the ``local deduction theorem''
of \luk\ logic,

\begin{theorem}
\label{theorem:local}
For any
 $\theta_1,\dots,\theta_m,\psi\in \mathsf{FORM}(X_1,\dots,X_n)$
 the following conditions are equivalent: 
 \begin{itemize}
 \item[(i)] $\theta_1,\dots,\theta_m\vdash_{\rm IMV}\psi$.
 
 \medskip
  \item[(ii)] For some integer $k>0$ we have a tautology
  $$
 \neg\underbrace{( \Delta \theta_1\odot\dots\odot \Delta \theta_m)\oplus\dots
 \oplus \neg ( \Delta \theta_1\odot\dots\odot \Delta \theta_m)}_{ k
\,\, \mbox{\rm \tiny times}} \oplus \Delta \psi.
  $$
  
   \medskip
  \item[(iii)] For some integer $k>0$ we have a tautology
  $$
 \underbrace{( \Delta \theta_1\odot\dots\odot \Delta \theta_m)\odot \dots
 \odot  ( \Delta \theta_1\odot\dots\odot \Delta \theta_m)}_{ k
\,\, \mbox{\rm \tiny times}} \too\Delta \psi,
$$
where  the {\em implication interval connective}  $\too$ is defined
by $u\too v=\neg u\oplus v.$

   \medskip
  \item[(iv)]  $\Delta\psi$  is obtainable via Modus Ponens
  from $\Delta \theta_1\odot\dots\odot \Delta \theta_m$
  and the tautologies.
 \end{itemize}
\end{theorem}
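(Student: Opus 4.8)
The plan is to collapse the whole problem into the central MV-algebra $\mathcal C(\mathcal I(\I))\cong\I$ and there invoke the local deduction theorem of \luk\ logic. The starting point is the \emph{collapse principle}: for every IMV-term $\rho$ and every valuation $\eta\colon\mathsf{FORM}(X_1,\dots,X_n)\to\mathcal I(\I)$ one has $\eta(\rho)=1$ iff $\eta(\Delta\rho)=1$, because $1=[1,1]$ and an interval equals $[1,1]$ exactly when its minimum is $1$. Since in $\I$ one has $x_1\odot\cdots\odot x_m=1$ iff every $x_i=1$, condition (i) is therefore equivalent to the single-premise statement that every $\eta$ with $\eta(\Phi)=1$ satisfies $\eta(\Delta\psi)=1$, where I abbreviate $\Phi=\Delta\theta_1\odot\cdots\odot\Delta\theta_m$. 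By Proposition~\ref{proposition:varia}(ii) both $\Phi$ and $\Delta\psi$ are center-valued, so using the normal-form transformation from the proof of Theorem~\ref{theorem:equational} I would rewrite them, after replacing each $\Delta X_j$ by a fresh variable $Y_j$ and each $\nabla X_j$ by $Z_j$, as honest MV-terms $\Phi^{\Delta}(\mathbf Y,\mathbf Z)$ and $\psi^{\Delta}(\mathbf Y,\mathbf Z)$. Through $\mathcal C(\mathcal I(\I))\cong\I$ and Theorem~\ref{theorem:hsp} these are McNaughton functions on the rational polytope $\Theta^{n}=\{(\mathbf Y,\mathbf Z)\mid Y_j\le Z_j\}$.

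The equivalence (ii)$\Leftrightarrow$(iii) is then purely formal: unfolding $\too$, the term in (iii) is $\neg(\Phi^{\odot k})\oplus\Delta\psi$, and the De Morgan identity $\neg(x^{\odot k})=(\neg x)^{\oplus k}$ — valid in every IMV-algebra by \eqref{equation:odot} and \eqref{equation:negneg} — rewrites it as the term displayed in (ii). Thus the two terms are IMV-provably equal, and one is a tautology iff the other is.

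The heart of the matter is (i)$\Leftrightarrow$(iii). Under the dictionary above, (i) reads $\{\Phi^{\Delta}=1\}\subseteq\{\psi^{\Delta}=1\}$ as subsets of $\Theta^{n}$, while (iii) reads: for some $k>0$, $(\Phi^{\Delta})^{\odot k}\le\psi^{\Delta}$ pointwise on $\Theta^{n}$ (recall $u\too v=1$ iff $u\le v$). The order constraints $Y_j\le Z_j$ may, if one prefers to work over the full cube, be fed in as premises $Y_j\too Z_j$, which are identically $1$ on $\Theta^{n}$ and so leave (iii) unaffected. The implication (iii)$\Rightarrow$(i) is immediate, since $(\Phi^{\Delta})^{\odot k}=1$ wherever $\Phi^{\Delta}=1$. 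The converse is where the work lies, and it is the \textbf{main obstacle}: from a set inclusion one must extract a \emph{single} exponent $k$ that works everywhere. Here I would exploit that $\Phi^{\Delta},\psi^{\Delta}$ are piecewise linear with finitely many integer-affine pieces on the compact set $\Theta^{n}$, and bound $(1-\psi^{\Delta})/(1-\Phi^{\Delta})$ on the locus $\{\Phi^{\Delta}<1\}$. On each cell both $1-\Phi^{\Delta}$ and $1-\psi^{\Delta}$ are nonnegative affine, the inclusion forces the zero locus of $1-\Phi^{\Delta}$ to sit inside that of $1-\psi^{\Delta}$ cellwise, and a compactness argument on the limiting feasible directions at a common zero yields $1-\psi^{\Delta}\le C\,(1-\Phi^{\Delta})$ for $C$ the maximum of finitely many cellwise constants; any integer $k\ge C$ then satisfies $(\Phi^{\Delta})^{\odot k}\le\psi^{\Delta}$. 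This uniform-exponent extraction is precisely the content of the \luk\ local deduction theorem \cite[\S18]{mun11}, \cite{cigdotmun}, so in the finished proof I would carry out the reduction to $\Theta^{n}$ and then cite it rather than redo the estimate.

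Finally I would treat (iii)$\Leftrightarrow$(iv). Restricted to center-valued terms, interval tautologies are exactly \luk\ tautologies, and Modus Ponens with the interval implication $\too=\neg u\oplus v$ is ordinary \luk\ Modus Ponens, since the center is an MV-algebra. Hence (iv) asserts that $\Delta\psi$ lies in the Modus-Ponens closure of the \luk\ tautologies together with the single premise $\Phi$. By the completeness theorem for \luk\ logic \cite[Theorem~2.5.3]{cigdotmun} and its syntactic deduction theorem, this closure contains $\Delta\psi$ iff $\Phi^{\odot k}\too\Delta\psi$ is a tautology for some $k>0$, which is exactly (iii). This closes the cycle (i)$\Rightarrow$(ii)$\Leftrightarrow$(iii)$\Leftrightarrow$(iv)$\Rightarrow$(i) and completes the proof.
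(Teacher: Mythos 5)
Your proposal is correct and follows essentially the same route as the paper's own proof: collapse to the center via the principle $\eta(\rho)=1\Leftrightarrow\eta(\Delta\rho)=1$, translate $\Phi=\Delta\theta_1\odot\cdots\odot\Delta\theta_m$ and $\Delta\psi$ into MV-normal forms $\Phi^{\Delta},\psi^{\Delta}$ in the variables $Y_j,Z_j$, invoke the local deduction theorem of \luk\ logic to extract the uniform exponent $k$, rewrite (ii) into (iii) by De Morgan, and settle (iv) through the Modus Ponens correspondence with the center. The one place where you are actually \emph{more} careful than the paper is the treatment of the order constraints: you keep $Y_j\too Z_j$ as side premises, whereas the paper's displayed chain asserts $\Delta\theta_1,\dots,\Delta\theta_m\vdash_{\rm IMV}\Delta\psi\Leftrightarrow\theta_1^{\Delta},\dots,\theta_m^{\Delta}\vdash_{\mbox{\L}_\infty}\psi^{\Delta}$ with the constraints dropped, and that equivalence is false as literally stated: for $\theta_1=\Delta X_1\odot\neg\nabla X_1$ and $\psi=\Delta X_1\odot\neg\Delta X_1$ the left side holds vacuously (on every interval $[y,z]$ with $y\leq z$ the premise evaluates to $[y\odot\neg z,y\odot\neg z]=0$), while the right side fails at the unconstrained point $Y_1=1$, $Z_1=0$. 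Your side-premise formulation---which matches what the paper itself does in the proof of Theorem \ref{theorem:equational}---combined with the local deduction theorem in its version with side formulas ($\Gamma,\phi\vdash\psi$ iff $\Gamma\vdash\phi^{\odot k}\too\psi$ for some $k>0$) is exactly the needed repair, so your write-up is sound and, on this point, an improvement over the paper's; your auxiliary piecewise-linear compactness sketch for the uniform exponent is dispensable once that citation is in place, as you yourself note.
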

\begin{proof}
One first notes that  a homomorphism $\eta$ of 
$\mathsf{FORM}(X_1,\dots,X_n)$ into $\mathcal I(\I)$
evaluates to 1 an IMV term $\sigma$  iff 
$\eta(\Delta \sigma)=1.$ So without loss of
generality the consequence relation
$\vdash_{\rm IMV}$ deals with interpretations of
IMV-terms in the center of  $\mathcal I(\I)$---which
we can safely identity with $\I$, in the light of Proposition
\ref{proposition:trivial}(ii).
Letting  $\vdash_{L_\infty}$ denote consequence in \luk\ logic,
\cite[\S 1]{mun11},
we can write:
\begin{eqnarray*}
&{}&\theta_1,\dots,\theta_m\vdash_{\rm IMV}\psi\\[0.2cm]
&\Leftrightarrow&\Delta\theta_1,\dots,\Delta\theta_m\vdash_{\rm IMV}
\Delta\psi\\[0.2cm]
&\Leftrightarrow&\theta_1^\Delta,\dots,\theta_m^\Delta\vdash_{\mbox{\L}_\infty}
\psi^\Delta\,\,\,\, \mbox{where the 
IMV-terms $\theta_i^ \Delta$ and $\psi^\Delta$ are readable}  \\[0.05cm]
&&\mbox{ as
\L$_\infty$-formulas
via the {\it polytime}  transformation  $\sigma\mapsto\sigma^{\Delta}$ of
\eqref{equation:normalform}-\eqref{equation:normalform-bis} }\\[0.2cm]
&\Leftrightarrow&\emptyset \vdash_{\mbox{\L}_\infty}
 \neg\underbrace{(\theta_1^\Delta \odot\dots\odot  \theta_m^\Delta)
 \oplus\dots\oplus \neg (\theta_1^\Delta \odot\dots\odot  \theta_m^\Delta)}_{ k
\,\, \mbox{\tiny \rm times}} \oplus  \psi^\Delta\\[0.2cm]
&\Leftrightarrow&\emptyset \vdash_{\rm IMV}
 \neg\underbrace{( \Delta \theta_1\odot\dots\odot \Delta \theta_m)\oplus\dots\oplus \neg ( \Delta \theta_1\odot\dots\odot \Delta \theta_m)}_{ k
\,\, \mbox{\tiny \rm times}} \oplus \Delta \psi,
\end{eqnarray*}
for some positive integer $k$.  This follows
from  the {\it local deduction theorem } in \luk\ logic,
\cite[1.7]{mun11}.
To conclude the proof,  let us note that  the MV-term
$$
\neg\underbrace{(\theta_1^\Delta \odot\dots\odot  \theta_m^\Delta)\oplus\dots
 \oplus \neg (\theta_1^\Delta\odot\dots\odot \theta_m^\Delta)}_{ k
\,\, \mbox{\rm \tiny times}} \oplus  \psi^\Delta
$$
can be equivalently   rewritten as
$$
 \underbrace{(\theta_1^\Delta \odot\dots\odot  \theta_m^\Delta)\odot \dots
 \odot  (\theta_1^\Delta \odot\dots\odot  \theta_m^\Delta}_{ k
\,\, \mbox{\rm \tiny times}} \too \psi^\Delta\,.
$$
 A final application of 
\cite[Definition~1.7]{mun11}
yields the desired conclusion.
\end{proof} 

As an immediate consequence of  Theorem \ref{theorem:equational}
we have

\begin{corollary}
\label{corollary:conp}
The tautology problem in \luk\ interval logic is
coNP-com\-p\-le\-te, and so is the consequence problem
$
\theta_1,\dots,\theta_m\vdash_{\rm IMV}\psi
$
\end{corollary}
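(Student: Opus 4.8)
The plan is to derive both coNP-completeness statements directly from Theorem \ref{theorem:equational}, reusing the tautology/consequence reductions already assembled in Theorems \ref{theorem:hsp} and \ref{theorem:local}. Throughout I identify ``tautology'' with ``$\mathcal I(\I)$-tautology'' via Theorem \ref{theorem:hsp}.

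First I would settle the tautology problem. Membership in coNP is immediate: deciding whether an IMV-term $\psi$ is a tautology is, by definition, exactly the instance $(\psi,1)$ of the equational theory ${\rm Id}_{\rm IMV(\mathcal X)}$, which lies in coNP by Theorem \ref{theorem:equational}. For coNP-hardness I would reduce the \emph{whole} equational theory to the tautology problem. Given an instance $(\sigma_1,\sigma_2)$, the quasiequation \eqref{equation:quasi} yields $\mathsf{IMV}\models\sigma_1=\sigma_2$ iff both $\Delta\sigma_1=\Delta\sigma_2$ and $\nabla\sigma_1=\nabla\sigma_2$ hold in $\mathcal I(\I)$. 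Since every $\Delta$- and $\nabla$-term lands in the center, which is an MV-algebra by Proposition \ref{proposition:varia}(ii), I may invoke the standard MV-fact that $a=b$ iff $a\leftrightarrow b=1$, where $u\leftrightarrow v=(u\too v)\odot(v\too u)$. Putting
\[
\tau=(\Delta\sigma_1\leftrightarrow\Delta\sigma_2)\odot(\nabla\sigma_1\leftrightarrow\nabla\sigma_2),
\]
I would verify that $\tau$ is a tautology iff $\mathsf{IMV}\models\sigma_1=\sigma_2$; the map $(\sigma_1,\sigma_2)\mapsto\tau$ is visibly polytime. As ${\rm Id}_{\rm IMV(\mathcal X)}$ is coNP-hard, so is the tautology problem, which is therefore coNP-complete.

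Next, for the consequence problem I would treat hardness and membership separately. Hardness is free: by \eqref{equation:tautology} the empty-premise case $\emptyset\vdash_{\rm IMV}\psi$ is precisely the tautology problem, just shown coNP-hard. For membership I would appeal to the local deduction machinery of Theorem \ref{theorem:local}, whose chain of equivalences shows that $\theta_1,\dots,\theta_m\vdash_{\rm IMV}\psi$ holds iff
\[
Y_1\leq Z_1,\dots,Y_n\leq Z_n,\ \theta_1^{\Delta},\dots,\theta_m^{\Delta}\vdash_{\mbox{\L}_\infty}\psi^{\Delta},
\]
an instance of the consequence problem of \luk\ logic produced in polynomial time by the normal-form transformation $\sigma\mapsto\sigma^{\Delta}$ of \eqref{equation:normalform}--\eqref{equation:normalform-bis}. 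As recalled in the proof of Theorem \ref{theorem:equational} (the refinement of \cite[Theorem~18.3]{mun11} together with \cite[Theorem~9.3.4]{cigdotmun}), consequence in \luk\ logic is in coNP, and the bound transfers to $\vdash_{\rm IMV}$.

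The genuinely load-bearing step is the coNP-hardness of the tautology problem; everything else is either a trivial sub-instance inclusion or a direct appeal to an already-proved reduction. The only point to check there with care is that $\tau$ faithfully captures \emph{both} coordinates of \eqref{equation:quasi} at once, i.e.\ that the $\odot$ of the two MV-biconditionals evaluates to $1$ exactly when both conjuncts do; this holds because $1$ is the top of the center's MV-order and $a\odot b=1$ forces $a=b=1$.
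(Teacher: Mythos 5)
Your proposal is correct and follows the paper's own route: the paper derives this corollary as an immediate consequence of Theorem \ref{theorem:equational}, and your argument is precisely the natural unpacking of that derivation, using Theorem \ref{theorem:hsp} to identify tautologies with $\mathcal I(\I)$-tautologies, the coNP bound of Theorem \ref{theorem:equational} for membership, and the machinery of Theorems \ref{theorem:equational} and \ref{theorem:local} for the consequence problem. The one point the paper leaves implicit---that coNP-hardness of the full equational theory does not automatically transfer to its sub-instance $\sigma=1$, so a genuine reduction such as your $\tau$-construction is required---you handle correctly via \eqref{equation:quasi} and the observation that $a\odot b=1$ forces $a=b=1$ in the center.
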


\begin{remark}  Proof-theoretically oriented readers may
envisage other approaches  to \luk\ interval logic,
beyond Theorem \ref{theorem:local}.
For instance,  one can obtain from 
Theorem \ref{theorem:hsp}
a  proof system (with {\it axioms} and {\it rules})
originating from the equational logic of IMV-algebras.
To this purpose, one first notes that the IMV-term
$\delta(x,y)= (x\odot \neg y)\oplus(y \odot \neg x)$
coincides with
Chang distance \cite[Definition~1.2.4]{cigdotmun} when interpreted
in the center  $\I$ of $\mathcal I(\I)$. 
Secondly,  one may  transform
 each IMV-axiom  $\omega=\sigma$ of the list
 \eqref{equation:associative}--\eqref{equation:great}
 into the tautology  $\overline{\omega=\sigma}$
 given by the IMV-term
 $ \neg\delta(\Delta\omega,\Delta\sigma)\odot \neg\delta(\nabla\omega,\nabla\sigma).
$
These nineteen tautologies are the  
 {\it axioms of \luk\ interval logic}.
Finally, whenever a rule 
 $$
 \frac{{\omega_1=\sigma_1},\dots,{\omega_m=\sigma_m}}{{\omega=\sigma}}
 $$
of equational logic is applied to obtain
 a new equation  $\omega=\sigma$ from old equations
 $\omega_1=\sigma_1,\dots,\omega_m=\sigma_m$,
 the corresponding {\it rule  of \luk\ interval logic}
 $$
 \frac{\overline{\omega_1=\sigma_1},\dots,\overline{\omega_m=\sigma_m}}{\overline{\omega=\sigma}}
 $$
  is applied  to derive
 $\overline{\omega=\sigma}$ from
  $\overline{\omega_1=\sigma_1},\dots,\overline{\omega_m=\sigma_m}$. 
  Since equational logic has finitely many rules (essentially: instantiation
  and congruence), then so does
the resulting proof system for \luk\ interval logic. 
A main reason of interest in the proof system of Theorem \ref{theorem:local}
is that Modus Ponens is its only rule.
 
As of today,  whatever  proof system  $\mathcal S$ 
one may choose  for
\luk\ interval logic,  $\mathcal S$ will 
unavoidably require exponential space.
This is a consequence of  Corollary \ref{corollary:conp}---in 
want
of  an answer to the P/NP problem.

\end{remark}
 
 \subsection*{A term-equivalent implicative reformulation of IMV-algebras}
 Just as MV-algebras have a term-equivalent variant \cite[\S 4]{cigdotmun} where
 the monoidal operations  $\oplus,\odot$ are replaced by
 \luk\ implication $x\too y=\neg x\oplus y$,  
 also IMV-algebras have a term-equivalent
 counterpart based on the operation $\too$
 of Theorem \ref{theorem:local}(iii).
 The main interest in this reformulation is in
 Theorem \ref{theorem:local}(iv), stating that
consequences in  \luk\ interval logic can
be computed using Modus Ponens
as the only rule, once formulas are
written using  $\too$  instead of the monoidal
connectives $\oplus,\odot.$

\begin{proposition}
\label{proposition:IIMV}
For any IMV-algebra  $J$
let  $(J,\too)$ denote  $J$ enriched
with the  $\too$ operation.

 \bigskip
(i)   Then $(J,\too)$ obeys the following
equations:
\begin{eqnarray}
\label{equation:associative-too}
x\too (y\too z)&=&y\too (x\too z) \\
\label{equation:commutative-too}
x\too y&=&\neg y \too \neg x\\
 \label{equation:opluszero-too}
 \neg x &=&x \too \Delta \iii\\
\label{equation:coneutral-too}
\neg\Delta \iii &=&\Delta \iii \too x\\
\label{equation:negneg-too}
\neg\neg x&=&x\\  
\label{equation:mangani-too}
(\Delta x\too \Delta y)\too \Delta y&=&
(\Delta y\too \Delta x)\too \Delta x\\
\label{equation:negi-too}
\neg \iii&=&\iii\\
\label{equation:deltadelta-too}
\Delta\Delta x&=&\Delta x\\
\label{equation:deltanabla-too}
\Delta\neg\Delta  x&=&\neg\Delta x\\
\label{equation:deltaoplus-too}
\Delta(\neg x\too y)&=&\neg\Delta x\too \Delta y\\
\label{equation:deltaodot-too}
\neg\Delta\neg(x\too y)&=&\Delta x\too \neg\Delta\neg y\\
\label{equation:order-too}
\Delta \neg x \too \neg\Delta  x &=& \neg\Delta \iii \\
\label{equation:great-too}
(\iii \too (\neg\Delta\neg x \too \Delta x))\too\Delta x &=&x.
\end{eqnarray}

\medskip
(ii)
Conversely, suppose an algebra  $L=(L,\iii,\neg, \Delta,\too)$
of type  $(0,1,1,2)$
 satisfies equations
  \eqref{equation:associative-too}--\eqref{equation:great-too}.
Define  $0=\Delta\iii,\,\,\, 1=\neg\Delta \iii,\,\,\, \nabla x=\neg\Delta\neg x,\,\,\,
x\oplus y=\neg x\too y,\,\,\, x\odot y=\neg(x\too \neg y)$.
Then the algebra  $(L,0,1,\iii,\neg, \Delta,\nabla,\oplus,\odot)$
is an IMV-algebra.

\medskip
(iii)
IMV-algebras are term-equivalent to the algebras satisfying
 the equations
  \eqref{equation:associative-too}--\eqref{equation:great-too}.

\medskip
(iv) If an equation  
 is satisfied the algebra  $(\I, \iii,\neg,\Delta,\too)$
then it is satisfied by all algebras
satisfying the equations \eqref{equation:associative-too}--\eqref{equation:great-too}.
 
\medskip
(v)  Thus an equation is obtainable from
equations \eqref{equation:associative-too}--\eqref{equation:great-too}
in equational logic iff it is satisfied by  
the algebra  $(\I, \iii,\neg,\Delta,\too)$.
\end{proposition}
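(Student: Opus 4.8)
The plan is to route all five claims through a single translation dictionary between the two signatures and to reduce the completeness claims (iv)--(v) to Theorem~\ref{theorem:hsp}. The pivotal fact, valid in any algebra carrying both families of operations linked by the stipulated definitions, is that $\too$ and $\oplus$ are \emph{mutually definable}: from $x\oplus y=\neg x\too y$ together with $\neg\neg x=x$ one recovers $x\too y=\neg x\oplus y$, so that $x\odot y=\neg(x\too\neg y)=\neg(\neg x\oplus\neg y)$ reproduces \eqref{equation:odot}; moreover $0=\Delta\iii$, $1=\neg\Delta\iii$ and $\nabla x=\neg\Delta\neg x$ are precisely the reducts occurring in \eqref{equation:deltai}, \eqref{equation:one} and \eqref{equation:nabla}.

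For (i), I would substitute $x\too y=\neg x\oplus y$ into each of \eqref{equation:associative-too}--\eqref{equation:great-too} and verify that it becomes a consequence of \eqref{equation:associative}--\eqref{equation:great} in the given IMV-algebra $J$. Most reduce to one-line de Morgan rearrangements: \eqref{equation:associative-too} and \eqref{equation:commutative-too} come from \eqref{equation:associative}--\eqref{equation:commutative}; \eqref{equation:opluszero-too} from $\Delta\iii=0$ \eqref{equation:deltai} and \eqref{equation:opluszero}; \eqref{equation:order-too} from \eqref{equation:order} after negating; and \eqref{equation:deltaodot-too} from \eqref{equation:deltaodot} through the definition of $\odot$. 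The only lengthy check is \eqref{equation:great-too}: unwinding the inner term $\nabla x\too\Delta x=\neg\nabla x\oplus\Delta x$, rewriting $\iii\too(\cdot)=\iii\oplus(\cdot)$ by $\neg\iii=\iii$ \eqref{equation:negi}, and applying de Morgan to the outer implication, one finds that \eqref{equation:great-too} collapses verbatim to \eqref{equation:great}.

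Part (ii) is the converse and the main obstacle, since now only \eqref{equation:associative-too}--\eqref{equation:great-too} are at hand and the entire IMV-axiom list must be rebuilt. I would first install the dictionary above (which needs only \eqref{equation:negneg-too}), then derive commutativity of $\oplus$ from \eqref{equation:commutative-too} and associativity from \eqref{equation:associative-too} with commutativity; this must precede free use of the de Morgan identities and is the delicate bookkeeping step. Once $\oplus$ is a commutative associative monoid operation, the axioms split in two. Those possessing a direct implicative counterpart---\eqref{equation:opluszero}, \eqref{equation:coneutral}, \eqref{equation:negneg}, \eqref{equation:mangani}, \eqref{equation:negi}, \eqref{equation:deltadelta}, \eqref{equation:deltanabla}, \eqref{equation:deltaoplus}, \eqref{equation:deltaodot}, \eqref{equation:order} and \eqref{equation:great}---transfer by the same rearrangements as in (i), read backwards. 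The remaining axioms, which have no primitive counterpart because $0,1,\nabla,\odot$ are now defined symbols, must be produced directly: \eqref{equation:odot}, \eqref{equation:one}, \eqref{equation:nabla} and \eqref{equation:deltai} hold by the very definitions, while \eqref{equation:deltazero} is the instance $\Delta\Delta\iii=\Delta\iii$ of \eqref{equation:deltadelta-too} and \eqref{equation:deltaone} is the instance $\Delta\neg\Delta\iii=\neg\Delta\iii$ of \eqref{equation:deltanabla-too}. I expect the genuine effort to lie entirely in ordering these derivations so that each invoked identity is already in place.

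For (iii) I would check that the two term-translations are mutually inverse: beginning from an IMV-algebra, defining $\too$ and then re-deriving $\oplus,\odot,\nabla,0,1$ returns the original operations (using \eqref{equation:odot}, \eqref{equation:nabla}, \eqref{equation:deltai}, \eqref{equation:one}), while beginning from a model of \eqref{equation:associative-too}--\eqref{equation:great-too}, defining $\oplus$ and then setting $\neg x\oplus y$ returns the original $\too$; with (i) and (ii) this is exactly term-equivalence in the sense of \cite[\S4]{universal}. Claims (iv) and (v) then follow formally. Term-equivalence is a definitional isomorphism of varieties that preserves the operator $\mathbb{HSP}$, so by Theorem~\ref{theorem:hsp} the variety axiomatized by \eqref{equation:associative-too}--\eqref{equation:great-too} equals $\mathbb{HSP}$ of the implicative reduct of $\mathcal I(\I)$---the algebra $(\I,\iii,\neg,\Delta,\too)$ of the statement. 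Hence every equation holding in that single algebra holds throughout the variety, which is (iv); combining this with the Birkhoff completeness theorem \cite[Theorem~14.19]{bursan} for the derivability direction yields (v).
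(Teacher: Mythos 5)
Your proposal is correct and is essentially the paper's own proof: the paper dismisses this proposition with the single line ``A routine transcription,'' and your dictionary $x\too y=\neg x\oplus y$ / $x\oplus y=\neg x\too y$, the equation-by-equation transfer in both directions (with commutativity and associativity of $\oplus$ derived first in part (ii)), and the reduction of (iv)--(v) to Theorem~\ref{theorem:hsp} via preservation of $\mathbb{HSP}$ under term-equivalence is precisely the transcription the authors leave to the reader. No gaps; your ordering of the derivations in part (ii) correctly handles the only delicate point.
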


\begin{proof} A routine transcription.
\end{proof}

 \section{The interval functor for general classes of ordered algebras}
 \label{section:general}
 %%%%%%%%%%%%%%%%%%%%%%%%%%%%%%%%%
Using MV-algebras and IMV-algebras
as a template,  in this section we will  extend the
definition of ``interval algebra'' $\mathcal I(A)$ to  
all algebras $A$ in very general  classes
$\class C$ of ordered structures. 
We will introduce  the associated  class
$\mathcal I (\class C)$
 of interval algebras, and provide a  necessary and 
 sufficient condition for 
 $\mathcal I $ to be a categorical equivalence
 between  $\class C$ and $\mathcal I (\class C)$.
 
In our  approach to IMV-algebras
   the underlying  order of MV-algebras 
has been overshadowed by  the
 monoidal operations $\oplus$ and $\odot$.
 By contrast, 
 the partial order of any algebra $A$ considered in this
 section will play a fundamental role from the outset  in defining
 its associated interval algebra: indeed, any other 
 operation of $A$ will be required to be monotone or
 antimonotone in each input argument.
As another dissimilarity from  IMV-algebras, the 
operations on the intervals of $A$ are no longer  defined
in terms of Minkowski pointwise operations---because
the counterpart of Proposition \ref{proposition:riesz}
need no longer hold for $A$ (see Example~\ref{Ex:Heyting}).

A suitable framework for our generalized approach to
interval algebras is provided by quasivarieties of
``$\rho$-partially ordered algebras'', where 
$\rho $ is a polarity as defined by Pigozzi
in  \cite{DPig}. 

As usual, for any  set $\Sigma$ of constant and functions
 symbols, and  $\Sigma$-algebra $A$, 
 we write  $c^A$ and $f^A$ for  the interpretation in $A$ of
 the constant symbol $c$  (resp., the function symbol $f$) of $\Sigma$.
 More generally, for each $\Sigma$-term
 $t$, we let  $t^A$ denote the interpretation of $t$ in  $A$. 

\begin{definition}
[{\bf \cite[Definition~2.1]{DPig}}]
\label{definition:polarity}
 A {\it polarity} for an algebraic language $\Sigma$ is a map $\rho$ 
 defined on each symbol in $\Sigma$
 such that whenever  $f\in\Sigma$ is an $n$-place
  function symbol, $\rho(f)\in\{+,-\}^{n}$. 
Given a $\Sigma$-algebra $A$ and a partial order 
$\leq$ on $A$, we say that $\leq$ is a {\it $\rho$-partial order of $A$}
 if for each $f\in\Sigma$ of arity $n$,  
 $(a_1,\ldots,a_n)\in A^n$,  $k\leq n$,  and  $a, b\in A$ with $a\leq b$,
  the  following conditions hold:

\medskip
\noindent
$(+)$\,\,\,\,\, If \,\,\,$\rho(f)_k=+$\,\,\,\, then
 $$f^{A}(a_1,\ldots,a_{k-1},a,a_{k+1},\ldots,a_n)\leq f^{A}(a_1,\ldots,a_{k-1},b,a_{k+1},\ldots,a_n);$$

\medskip
\noindent
$(-)$\,\,\,\,\,
If \,\,\,$\rho(f)_k=-$\,\,\,\, then 
$$f^{A}(a_1,\ldots,a_{k-1},b,a_{k+1},\ldots,a_n)\leq f^{A}(a_1,\ldots,a_{k-1},a,a_{k+1},\ldots,a_n).$$

\noindent
The pair $(A,\leq)$ is called a 
{\it $\rho$-partially ordered $\Sigma$-algebra}, 
(for short, {\it  $\rho$-poalgebra} when $\Sigma$ is clear from the context).
We further say that  $(A,\leq)$ is
a  {\it bounded}
 $\rho$-poalgebra if there exist
  constant symbols $0,1\in\Sigma$ 
  such that $0^{A}\leq a\leq 1^{A}$ for each $a\in A$.
\end{definition}

\begin{definition}
Given a polarity  $\rho$   for an algebraic language $\Sigma$ and   a bounded $\rho$-poalgebra
$(A,\leq)$
 we let
 $\mathcal I (A)$
   be the $(\Sigma\cup\{\Delta,\nabla, \iii\})$-algebra whose universe is the set $I(A)
   =\{[a,b]\mid a,b\in A\mbox{ and }a\leq b\}$ of intervals in $A$, 
and whose operations are defined as follows:

\medskip

\begin{itemize}
\item[(i)] For each $n$-ary symbol $f\in\Sigma$ and intervals
 $[a_1,b_1],\dots,[a_n,b_n]\in I(A)$,
\[f^{\mathcal{I}(A)}([a_1,b_1],\dots,[a_n,b_n])
=[f^A(c_1,\ldots,c_n),f^A(d_1,\ldots,d_n)],\]
where $c_k=a_k$ and $d_k=b_k$ if $\rho(f)_k=+$, 
and  $c_k=b_k$ and $d_k=a_k$ if $\rho(f)_k=-$.

\medskip
\item[(ii)]  For each $[a,b]\in I(A)$, \,\,\,
$\Delta^{\mathcal{I}(A)}([a,b])=[a,a]\,\,\, 
\mbox{ and }\,\,\,\nabla^{\mathcal{I}(A)}([a,b])=[b,b]$.

\medskip
\item[(iii)] 
 $\iii^{\mathcal{I}(A)}=[0^{A},1^{A}].$
\end{itemize}
\end{definition}

\begin{remark}
By Proposition \ref{proposition:riesz},  
for any MV-algebra
 $A$  (equipped with its natural  order
$\leq$),   the algebra  $\mathcal{I}(A)$
given by Definition \ref{definition:polarity} 
coincides with the algebra defined in Section~\ref{Sec:IMV}
in terms of Minkowski operations.
It turns out that  Proposition
  \ref{proposition:riesz} 
cannot be extended to arbitrary classes of $\rho$-poalgebras.
For each $\rho$-poalgebra $A$, $n$-ary function symbol $f\in \Sigma$, 
and  intervals $[a_1,b_1],\dots,[a_n,b_n]\in I(A)$ we have the inclusion
\[
\{f^A(e_1,\ldots,e_n)\mid(e_1,\ldots,e_n)\in[a_1,b_1]\times\cdots\times[a_n,b_n] \}
\subseteq f^{\mathcal{I}(A)}([a_1,b_1],\dots,[a_n,b_n]).
\]
The converse inclusion need not hold
in general, because  the set 
\[f^A([a_1,b_1]\times\cdots\times[a_n,b_n])
=\{ f^A(e_1,\ldots,e_n)\mid(e_1,\ldots,e_n)\in[a_1,b_1]\times\cdots\times[a_n,b_n] \}
\] 
need not  be an interval. 
However, $f^{\mathcal I(A)}([a_1,b_1]\times\cdots\times[a_n,b_n])$ has a 
smallest and a largest  element. 
Thus
$f^{\mathcal{I}(A)}([a_1,b_1],\dots,[a_n,b_n])$ coincides with the smallest
interval of $A$ containing the set 
$f([a_1,b_1]\times\cdots\times[a_n,b_n])$.
\end{remark}

\begin{example}\label{Ex:Heyting}
Following \cite[p.~44]{bursan}, by a {\it Heyting algebra} 
$(A, \wedge,\vee,\to,0,1)$ we mean a
bounded distributive lattice such that 
$a\wedge b \leq c$ iff $a\leq b\to c$.
The underlying order  $\leq$ of $A$ is  given by
the stipulation:   $a\leq b$ iff $a\wedge b=a$. 
As is well known, the class
$\class H$   of Heyting algebras is a variety. 
Let 
 us denote by  $[0,1]$  the uniquely determined
 Heyting algebra whose lattice reduct is 
 $([0,1], \min,\max,0,1)$. 
 Then  $a\to b=1$ if $a\leq b$,  and $a\to b=b$ otherwise. 
For each $[a,b],[c,d]\in\mathcal{I}([0,1])$ we can write
\begin{align*}
[a,b]\wedge^{\mathcal{I}(A)}[c,d]&=[\min\{a, c\},\min\{b,d\}]=\min([a,b]\times[c,d]);\\
[a,b]\vee^{\mathcal{I}(A)}[c,d]&=[\max\{a, c\},\max\{b,d\}]=\max([a,b]\times[c,d]).
\end{align*} 
However, when  $a\neq b$ we have
$
[a,b]\to^{\mathcal{I}(A)}[a,a]=[b\to a, a\to a]=[a,1], 
$
but
$\{e\to f\mid (e,f)\in[a,b]\times[a,a]\}=\{a,1\}$.
\end{example}

\begin{definition}
\label{definition:order}
Given a class of $\class C$ of $\rho$-poalgebras,
  a set of equations $E(x,y)$  in two variables is said to
   {\it determine the order} of $A$ if the following two conditions
   are equivalent, for all  
   $a,b\in A$:
   \begin{itemize}
   \item $a\leq b$;
   \item $t^{A}(a,b)=s^{A}(a,b)$ for each equation $t(x,y)=s(x,y)\in E(x,y)$.
   \end{itemize} 
\end{definition}

As an example,  for every MV-algebra $A$,
the single equation $\neg x\oplus y=1$ determines the order of $A$.
In any class of lattices, the order is determined by the single equation
 $x\wedge y=x$.

\subsection*{Notational convention}
Throughout  the rest of this section,  $\Sigma$ will denote a
 set of constant and function symbols,  and 
 $\class Q$ will denote a $\Sigma$-quasivariety  of $\rho$-poalgebras  having a set    $E(x,y)$ of $\Sigma$-equations
 such that  the order of each algebra $A\in \class Q$ is determined by $E(x,y)$. 
We also write
 $$\class{IQ}=\mathbb{Q}(\mathcal{I}(\class Q))$$
for  the quasivariety of algebras 
generated by $\mathcal{I}(\class Q)$. We  regard
 $\class Q$ and $\class{IQ}$ as categories whose morphisms are $\class Q$-homomorphisms and
$\class{IQ}$-homomorphisms, respectively.

\begin{theorem}\label{Theo:GeneralFunctor1}
For any $\class Q$ we have:  
\begin{itemize}
\item[(i)] Every $\class Q$-homomorphism $h\colon A\to B$ 
is order-preserving.  

\medskip
\item[(ii)] 
Let the assignment    $\mathcal I\colon \class Q\to \class{IQ}$  be defined by: 
\begin{eqnarray*}
\mbox{objects:} \,\,\,\,\,\,\,\,\,\,\,\,\,\,\,\,\,\,\,\,\,\,\,\,\,\, \,\,\,\,\,\,\,\,\,\,\,\,\,\,\,\,\,\,\,\, 
 A  \,\,\,\,&\mapsto&\,\,\,\, \mathcal I( A)\\
\mbox{morphisms:} \,\,\,\,\,\,\,\,\,\,\,\, \,\, h\colon A\to B\,\,\,\,& \mapsto&\,\,\,\,
 \mathcal I(h)\colon \mathcal I(A)\to\mathcal I(B),
\end{eqnarray*}

\medskip
\noindent
where the homomorphism  $\mathcal I(h)$
is given by $(\mathcal I(h))([a,b])=[h(a),h(b)]$, for any interval
$[a,b]\in \mathcal I( A)$.  Then $\mathcal I$
is a well-defined functor.
\end{itemize}

\end{theorem}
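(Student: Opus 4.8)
The plan is to prove the two parts in turn, with the order-determining equations $E(x,y)$ of Definition~\ref{definition:order} as the only substantive ingredient; everything else is formal bookkeeping.

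For part (i), the key point is that a $\class Q$-homomorphism preserves all $\Sigma$-term operations, hence preserves every equation of $E(x,y)$. Concretely, I would start from $a\leq b$ in $A$, which by the definition of $E(x,y)$ means $t^A(a,b)=s^A(a,b)$ for each $t(x,y)=s(x,y)\in E(x,y)$. Applying $h$ and using that $h$ commutes with each $\Sigma$-term yields $t^B(h(a),h(b))=s^B(h(a),h(b))$ for every such equation. Since the \emph{same} set $E(x,y)$ determines the order of $B\in\class Q$, this gives $h(a)\leq h(b)$, so $h$ is order-preserving.

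For part (ii), I first note that objects are handled by definition: $\mathcal I(A)\in\mathcal I(\class Q)\subseteq\class{IQ}$ because $\class{IQ}=\mathbb{Q}(\mathcal I(\class Q))$. Part (i) then shows $\mathcal I(h)$ is well-defined as a function, since $a\leq b$ forces $h(a)\leq h(b)$, so that $[h(a),h(b)]$ is a genuine interval of $B$. It remains to verify that $\mathcal I(h)$ preserves every operation of $\Sigma\cup\{\Delta,\nabla,\iii\}$, which is exactly what it means to be an $\class{IQ}$-morphism. The operations $\Delta,\nabla,\iii$ are immediate: $\mathcal I(h)(\Delta^{\mathcal I(A)}([a,b]))=\mathcal I(h)([a,a])=[h(a),h(a)]=\Delta^{\mathcal I(B)}(\mathcal I(h)([a,b]))$, likewise for $\nabla$, and $\mathcal I(h)(\iii^{\mathcal I(A)})=[h(0^A),h(1^A)]=[0^B,1^B]=\iii^{\mathcal I(B)}$ since $h$ preserves the constants $0,1\in\Sigma$.

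The one step needing care is the clause for a function symbol $f\in\Sigma$, where $f^{\mathcal I(A)}$ and $f^{\mathcal I(B)}$ each select endpoints according to the polarity $\rho(f)$. Writing $f^{\mathcal I(A)}([a_1,b_1],\dots,[a_n,b_n])=[f^A(c_1,\dots,c_n),f^A(d_1,\dots,d_n)]$ and applying $\mathcal I(h)$ gives $[f^B(h(c_1),\dots,h(c_n)),f^B(h(d_1),\dots,h(d_n))]$, whereas $f^{\mathcal I(B)}$ applied to the intervals $[h(a_k),h(b_k)]$ chooses its endpoints from the $h(a_k),h(b_k)$ by the same polarity rule. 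A case split on $\rho(f)_k\in\{+,-\}$ then shows that $h(c_k)$ coincides with the lower argument selected on the $B$-side and $h(d_k)$ with the upper one (when $\rho(f)_k=+$ both are $h(a_k),h(b_k)$; when $\rho(f)_k=-$ they swap to $h(b_k),h(a_k)$), so the two intervals agree. This polarity matching is the main (indeed the only) obstacle. Finally, functoriality is routine from the endpoint-wise definition: $\mathcal I(\mathrm{id}_A)=\mathrm{id}_{\mathcal I(A)}$ and $\mathcal I(g\circ h)([a,b])=[g(h(a)),g(h(b))]=(\mathcal I(g)\circ\mathcal I(h))([a,b])$.
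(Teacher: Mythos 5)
Your proposal is correct and follows essentially the same route as the paper: part (i) via preservation of the order-determining equations $E(x,y)$, and part (ii) by checking the $\Delta,\nabla,\iii$ clauses directly and matching the polarity-selected endpoints $c_k,d_k$ across $h$ for each $f\in\Sigma$. The only differences are cosmetic — you make the $\rho(f)_k\in\{+,-\}$ case split and the identity/composition checks explicit, which the paper leaves as routine.
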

\begin{proof}
(i) follows because
 the same equations $E(x,y)$ define  the order in $A$ and in~$B$.

\smallskip
(ii) By (i), \,
 $\mathcal{I}(h)$ is a well defined map from $\mathcal{I}(A)$ to $\mathcal{I}(B)$. To see that $\mathcal{I}(h)$ is a homomorphism,  first consider $f\in \Sigma$ and $[a_1,b_1],\dots,[a_n,b_n]\in \mathcal I(A)$. 
Then
\begin{align*}
\mathcal{I}(h)(f^{\mathcal I(A)}([a_1,b_1],\dots,[a_n,b_n]))&=\mathcal{I}(h)([f^A(c_1,\ldots,c_n),f^A(d_1,\ldots,d_n)])\\
&=[h(f^A(c_1,\ldots,c_n)),h(f^A(d_1,\ldots,d_n))],
\intertext{where $c_k=a_k$ and $d_k=b_k$ if $\rho(f,k)=+$; and  $c_k=b_k$ and $d_k=a_k$ if $\rho(f,k)=-$. Since $h$ commutes with $f$, }
\mathcal{I}(h)(f^{\mathcal I(A)}([a_1,b_1],\dots,[a_n,b_n]))&=[f^{B}(h(c_1),\ldots,h(c_n)),f^{B}(h(d_1),\ldots,h(d_n))]\\
&=f^{\mathcal I(B)}([h(a_1),h(b_1)],\ldots,[h(a_n),h(b_n)]).
\end{align*}
For every   $[a,b]\in\mathcal I(A)$,
\begin{align*}
\mathcal{I}(h)\bigl(\Delta^{\mathcal I(A)}[a,b]\bigr)&=\mathcal{I}(h)([a,a])=[h(a),h(a)]=\Delta^{\mathcal I(B)}[h(a),h(b)]\\
&=\Delta^{\mathcal I(B)}\bigl(\mathcal{I}(h)([a,b])\bigr)\\
\mathcal{I}(h)\bigl(\nabla^{\mathcal I(A)}[a,b]\bigr)&=\mathcal{I}(h)([b,b])=[h(b),h(b)]=\nabla^{\mathcal I(A)}[h(a),h(b)]\\
&=\nabla^{\mathcal I(A)}\bigl(\mathcal{I}(h)([a,b])\bigr)\\
\mathcal{I}(h)\bigl(\iii^{\mathcal{I(A)}}\bigr)&=\mathcal{I}(h)([0^A,1^A])
=[h(0^A),h(1^A)]=[0^B,1^B]=\iii^{\mathcal{I}(B)}.
\end{align*}
Therefore,  $\mathcal{I}$ is a functor.
\end{proof}

For each quasivariety of $\rho$-partially ordered algebras $\class Q$ the functor $\mathcal I\colon \class Q\to \class{IQ}$ defined in Theorem~\ref{Theo:GeneralFunctor1} is called the {\it interval functor} of $\class{Q}$.

\medskip
Next we  will prove that $\mathcal I$ has always a left adjoint, and 
$\class Q$ is isomorphic to a retractive subcategory of $\class{IQ}$.

\begin{theorem}\label{Theo:GeneralFunctors}
The following conditions hold:

\smallskip
\begin{itemize}

\item[(i)] For each $J\in\class{IQ}$ and 
 $f\in \Sigma$,  the set 
$C(J)=\{x\in J\mid \nabla x=x=\Delta x\}$ 
is closed under the operation   $f^{J}$.

\bigskip
\item[(ii)]  Let the assignment \,\, $\mathcal C\colon \class{IQ}\to\class Q$  be
defined by:
\begin{eqnarray*}
\mbox{objects:}  \,\,\,\,\,\,\,\,\,\,\,\,\,\,\,\,\,\,\,\,\,\,\,\,\,\,\,\,\,\,\,\,\,\,\,\,\,\,\,\,\,\,\,\,\,\,\,\,
\,\,\,\,\,\, \,\,\,
J \,\,\,\,&\mapsto& \mathcal C( J)\\
\mbox{morphisms:}\,\,\,\,\,\,\,\,\,\,\,\, \,\,\,\,\,\,\,\,\,\,\,
\rho\colon J\to K\,\,\,\,\,\,\, &\mapsto& \ \mathcal C( \rho)=
\rho\restrict{C(J)}, 
\end{eqnarray*} 
where $\mathcal C(J)$ is the $\Sigma$-algebra whose universe is $C(J)$ and for each $f\in \Sigma$, $f^{\mathcal C(J)}=f^J\restrict {C(J)}$.
Then $\mathcal C$
is a well-defined faithful functor.

\medskip

\item[(iii)] For each $A\in\class Q$ the map $\iota_A\colon A\to \mathcal C(\mathcal I(A))$ defined by $\iota_A(a)=[a,a]$ is an isomorphism; further,  the map
$A\mapsto \iota_A$ 
is a natural isomorphism from the identity 
functor ${\rm I}_{\class Q}\colon \class Q\to \class Q$ 
to the composite functor $\mathcal{C}\circ \mathcal{I}$.

\medskip

\item[(iv)] For each $J\in\mathbb{Q}(\mathcal{I}(\class Q))$, 
 the map $\gamma_{J}\colon J\to \mathcal{I}(\mathcal{C}(J))$ defined by $\gamma_{J}(a)=[\Delta a,\nabla a]$ is a one-to-one homomorphism; the map 
$J\mapsto \gamma_A$ 
is a natural transformation from the identity 
functor ${\rm I}_{\class{IQ}}\colon\class{IQ}\to\class{IQ}$ 
to the composite functor $\mathcal{I}\circ \mathcal{C}$.

\end{itemize}

\end{theorem}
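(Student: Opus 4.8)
The unifying idea is that, although a generic $J\in\class{IQ}=\mathbb{ISPP}_{u}(\mathcal I(\class Q))$ need not be an interval algebra, every (quasi)equation holding in all the generators $\mathcal I(A)$ ($A\in\class Q$) automatically holds throughout $\class{IQ}$, since $\mathbb I,\mathbb S,\mathbb P,\mathbb{P}_{u}$ preserve (quasi)equations. The plan is to first record a short list of such laws, each verified directly from Definition~\ref{definition:polarity}: the absorption laws $\Delta\Delta x=\Delta x$, $\nabla\Delta x=\Delta x$, $\Delta\nabla x=\nabla x$, $\nabla\nabla x=\nabla x$; for each $f\in\Sigma$ the degenerate-image law $\Delta f(\Delta x_{1},\dots,\Delta x_{n})=f(\Delta x_{1},\dots,\Delta x_{n})$ together with the polarity-driven commutation laws $\Delta f(\vec x)=f(\sigma_{1}x_{1},\dots,\sigma_{n}x_{n})$ and $\nabla f(\vec x)=f(\tau_{1}x_{1},\dots,\tau_{n}x_{n})$, where $\sigma_{k},\tau_{k}\in\{\Delta,\nabla\}$ are dictated by $\rho(f)_{k}$; the boundary laws $\Delta\iii=0$, $\nabla\iii=1$; the instances $t(\Delta x,\nabla x)=s(\Delta x,\nabla x)$, $(t,s)\in E$, of the order-determining equations (Definition~\ref{definition:order}); and the determination quasiequation \eqref{equation:quasi}. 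Each is checked in $\mathcal I(A)$ by evaluating on intervals and using that the degenerate intervals form a $\Sigma$-subalgebra $D(A)\cong A$ on which $\Delta,\nabla$ act as the identity; the order-determining instances hold because $\Delta x,\nabla x\in D(A)$ with $\Delta x\le\nabla x$.

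Granting this list, parts (i), (iii) and (iv) reduce to computations parallel to Lemmas~\ref{lemma:iota} and~\ref{lemma:gamma} and Theorem~\ref{theorem:representation}. For (i), the absorption laws give $C(J)=\{x\mid\Delta x=x\}=\{\Delta y\mid y\in J\}$, and for central $x_{1},\dots,x_{n}$ the degenerate-image law yields $f^{J}(\vec x)=f^{J}(\Delta x_{1},\dots,\Delta x_{n})=\Delta f^{J}(\Delta x_{1},\dots,\Delta x_{n})\in C(J)$. For (iii), $\iota_{A}\colon a\mapsto[a,a]$ is a bijection onto $C(\mathcal I(A))=D(A)$, it is a $\Sigma$-homomorphism because images of degenerate intervals are degenerate, and naturality is the one-line identity $\mathcal C(\mathcal I(h))(\iota_{A}(a))=[h(a),h(a)]=\iota_{B}(h(a))$. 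For (iv), the order-determining instances make $[\Delta x,\nabla x]$ a bona fide element of $\mathcal I(\mathcal C(J))$; the commutation, absorption and boundary laws show $\gamma_{J}$ respects every symbol of $\Sigma\cup\{\Delta,\nabla,\iii\}$; injectivity is immediate from \eqref{equation:quasi}; and naturality follows from $\mathcal I(\mathcal C(\rho))([\Delta x,\nabla x])=[\rho(\Delta x),\rho(\nabla x)]=[\Delta\rho(x),\nabla\rho(x)]=\gamma_{K}(\rho(x))$. Unlike the MV case, $\gamma_{J}$ is only claimed injective, not onto---this weakening is precisely what leaves room for the equivalence criterion of the following theorems.

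The genuinely new and hardest step is the assertion in (ii) that $\mathcal C(J)\in\class Q$, since $J$ lies only in the generated quasivariety, not literally among the interval algebras. Here the plan is to show that the center construction commutes with the operators generating $\class{IQ}$: the center of a product is the product of centers (as $\Delta,\nabla$ act coordinatewise), the center of a subalgebra $K\le J$ is $K\cap C(J)$, and the center of an ultraproduct is the ultraproduct of centers (the defining condition $\Delta x=x$ being equational, one appeals to {\L}o\'s's theorem). With the isomorphism $\mathcal C(\mathcal I(A))\cong A$ from (iii), this yields $\mathcal C(\class{IQ})\subseteq\mathbb{ISPP}_{u}(\mathcal C(\mathcal I(\class Q)))=\mathbb{ISPP}_{u}(\class Q)=\class Q$, the final equality holding because $\class Q$ is a quasivariety. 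I expect the only delicate point to be the commutation with $\mathbb{P}_{u}$, where one verifies that a class $[x]$ is central exactly when centrality holds on a $\mathcal U$-large set of coordinates.

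Finally, the remaining assertions of (ii) use the same ingredients: every $\class{IQ}$-morphism $\rho$ maps $C(J)$ into $C(K)$ because it commutes with $\Delta$, so $\mathcal C(\rho)=\rho\restrict C(J)$ is a $\Sigma$-homomorphism $\mathcal C(J)\to\mathcal C(K)$, and preservation of identities and composition is immediate from the restriction formula; faithfulness follows because two morphisms agreeing on centers agree on all $\Delta x$ and $\nabla x$, hence on all $x$ by \eqref{equation:quasi}.
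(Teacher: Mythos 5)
Your proposal is correct, and for parts (i), (iii) and (iv) it runs essentially parallel to the paper's own proof: one verifies a stock of laws on the generators $\mathcal{I}(A)$ --- absorption, the polarity-driven commutation laws \eqref{eq:DeltaF}--\eqref{eq:NablaF}, the boundary laws \eqref{eq:extraoper}, the order-determining instances \eqref{eq:order}, and the determination quasiequation \eqref{eq:Moisil} --- and transfers them to all of $\class{IQ}$ by preservation of (quasi)equations, then computes. Your only variation there is in (i), which you handle with the \emph{equation} $\Delta f(\Delta x_1,\ldots,\Delta x_n)=f(\Delta x_1,\ldots,\Delta x_n)$ plus absorption (so that $C(J)$ is exactly the image of $\Delta$), where the paper uses the quasiequation \eqref{Eq:QuasiOper}; both are fine. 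The genuine divergence is at the point you rightly identify as hardest, namely $\mathcal{C}(J)\in\class{Q}$ in (ii). The paper argues syntactically: every quasiequation $s_1=t_1,\ldots,s_n=t_n\Rightarrow s=t$ valid in $\class{Q}$, once relativized by the extra premises $\Delta x_i=x_i$ as in \eqref{eq:IQuasi}, holds in each $\mathcal{I}(A)$ (because the degenerate intervals form a copy of $A$), hence in every $J\in\class{IQ}$; evaluating on $C(J)$ then shows that $\mathcal{C}(J)$ satisfies the entire quasiequational theory of $\class{Q}$, which is all that membership in a quasivariety requires. You argue structurally: the center construction commutes with $\mathbb{I}$, $\mathbb{S}$, $\mathbb{P}$ and $\mathbb{P}_u$ (using the theorem of {\L}o\'s and nonemptiness of centers --- guaranteed by the constants --- for the ultraproduct step), so $\mathcal{C}(\class{IQ})\subseteq\mathbb{ISPP}_u(\mathcal{C}(\mathcal{I}(\class{Q})))=\mathbb{ISPP}_u(\class{Q})=\class{Q}$, invoking Mal'cev's characterization $\mathbb{Q}(K)=\mathbb{ISPP}_u(K)$ of generated quasivarieties. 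Both routes are sound. The paper's is more elementary (no Mal'cev, no ultraproducts) and pays an extra dividend: the relativized quasiequations \eqref{eq:IQuasi}, together with the other transferred laws, are precisely what yield the explicit axiomatization of $\class{IQ}$ recorded in the Corollary immediately following the theorem, which your approach does not produce. Yours, in exchange, isolates a fact of independent interest --- that $\mathcal{C}$ commutes with the class operators --- and makes transparent that nothing about the particular axioms of $\class{Q}$ is used beyond closure under $\mathbb{I}$, $\mathbb{S}$, $\mathbb{P}$, $\mathbb{P}_u$.
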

\begin{proof}

(i) 
Let $A\in\class Q$
and $[a,b]\in \mathcal{I}(A)$. Observe that $\Delta [a,b]=[a,b]$ implies $b=a$. 
Therefore, for any $n$-ary function symbol
 $f\in \Sigma$ and $[a_1,b_1],\ldots,[a_n,b_n]\in \mathcal{I}(A)$,
 if   $\Delta[a_i,b_i]=[a_i,b_i]$  for each\, $i$,   then 
\begin{align*}
f^{\mathcal{I}(A)}([a_1,b_1],\ldots,[a_n,b_n])&=f^{\mathcal{I}(A)}([a_1,a_1],\ldots,[a_n,a_n])\\
&=[f(a_1,\ldots, a_n),f(a_1,\ldots, a_n)]\\
&=\Delta^{\mathcal{I}(A)}\bigl( f^{\mathcal{I}(A)}([a_1,b_1],\ldots,[a_n,b_n])\bigr).
\end{align*}
As a consequence, the quasiequation 
\begin{equation}\label{Eq:QuasiOper}\Delta x_1=x_1,\ldots,\Delta x_n=x_n\Rightarrow \Delta f(x_1,\ldots, x_n)=f(x_1,\ldots,x_n)
\end{equation}
 is satisfied by  each algebra $\mathcal{I}(\class Q)$.
%  for each  $f\in \Sigma$.
Since each $J\in\class{IQ}$ satisfies \eqref{Eq:QuasiOper},  $C(J)$ is 
closed under  $f^{J}$.
% for each $f\in \Sigma$.

\medskip
(ii) Let $s_1=t_1,\ldots, s_n=t_n \Rightarrow s=t$ be a quasiequation
in the variables $x_1,\ldots, x_m$ satisfied by all algebras in  
  $\class Q$ . The same argument used in (i) shows that
  the quasiequation 
\begin{equation}\label{eq:IQuasi}
\Delta x_1=x_1,\ldots, \Delta x_m=x_m,s_1=t_1,\ldots, s_n=t_n\Rightarrow s=t
\end{equation}
is satisfied by  every algebra in $\mathcal{I}(\class Q)$. Since  $\mathcal C(J)$ 
satisfies every quasiequation satisfied by  $\class Q$, then 
  $\mathcal C(J)\in \class Q$. Every homomorphism $h\colon J\to K$
   commutes with $f^{J}$ (for each $f\in\Sigma$), whence  
  $\mathcal C(h)$ is a homomorphism.

To prove that $\mathcal C$ is faithful, let $J,K\in\class{IQ}$ and 
$g,h\colon J\to K$  be  homomorphisms such that 
$\mathcal{C}(g)=\mathcal{C}(h)$. 
The quasiequation 
\begin{equation}\label{eq:Moisil}
\Delta x=\Delta y,\nabla x=\nabla y\Rightarrow x=y
\end{equation} 
is satisfied by  each algebra in $\mathcal{I}(\class Q)$,  whence it is
satisfied by  $J$.
Now  for each $a\in J$
we  can write 
$\Delta g(a)=g(\Delta a)=h(\Delta a)=\Delta h(a)$ 
and $\nabla g(a)=g(\nabla a)=h(\nabla a)=\nabla h(a)$. 
Since $K$ satisfies \eqref{eq:Moisil}, is follows that $g(a)=h(a)$ for each $a\in K$, that is, $g=h$.

\medskip
(iii) It is easy to check that the map $\iota_{A}\colon A\to \mathcal{C}(\mathcal{I}(A))$ is an isomorphism, which  is natural in $\class Q$.

\medskip
(iv) 
To see that $\gamma_{J}$  is well defined observe that 
\begin{equation}\label{eq:order}
t(\Delta x,\nabla x)=s(\Delta x,\nabla x) 
\end{equation}
is satisfied by all algebras of 
 $\mathcal{I}(\class Q)$, for each $s=t\in E(x,y)$. Then for each $a\in J$, $\Delta a \leq \nabla a$ in $\mathcal{C}(J)$.
The fact that $\gamma_{J}$ is one-to-one follows directly from  \eqref{eq:Moisil}. 
For each 
$A\in\class Q$ and  $f\in \Sigma$, let 
$$
\mbox{$t_{f,i}(x_i)=\Delta x_i$ 
and $s_{f,i}(x_i)=\nabla x_i$ if $\rho(f)_i=+$,}
$$
$$
\mbox{ 
$t_{f,i}(x_i)=\nabla x_i$ and 
$s_{f,i}(x_i)=\Delta x_i$  if $\rho(f)_i=-$.}
$$

\smallskip
\noindent
By definition of 
$f^{\mathcal{I}(A)}$, 
the equations
\begin{align}
\label{eq:DeltaF}\Delta(f(x_1,\ldots, x_n))&=f(t_{f,1}(x_1),\ldots, t_{f,n}(x_n))\\
\label{eq:NablaF}\nabla(f(x_1,\ldots, x_n))&=f(s_{f,1}(x_1),\ldots, s_{f,n}(x_n))
\end{align}
are satisfied by every  algebra in $\mathcal{I}(\class Q)$.
Since $J$ satisfies these equations, 
$\gamma_{J}$ preserves $f^{J}$ for each $f\in\Sigma$.
Finally,  a direct inspection shows that every algebra in
$\mathcal{I}(\class Q)$ satisfies the following equations: 
\begin{equation}
\label{eq:extraoper}
\Delta \iii=0, \,\,\, \nabla \iii=1, \,\,\,
\Delta \Delta x=\Delta x, 
\,\,\,
\nabla \Delta x=\Delta x, 
\,\,\,
\nabla \nabla x=\nabla x,
\,\,\,
 \Delta\nabla x=\nabla x. 
\end{equation}
We have shown
that    $\gamma_{J}$ also preserves $\iii^{J}$, $\Delta^J$ and $\nabla^J$,
as required to complete the proof.
\end{proof}

 \medskip
The proof of Theorem~\ref{Theo:GeneralFunctors}
yields  a method which,
having in input a  set  of quasiequations axiomatizing $\class Q$ 
outputs  a  set of quasiequations axiomatizing  $\class{IQ}$:

\begin{corollary}
Suppose $\class Q$ is axiomatized by a finite set
 $M$ of $\Sigma$-quasiequations.
 Then  $\class{IQ}$ is axiomatized by the following
 quasiequations,
  for every  function symbol $f\in\Sigma$, \,\,\,
quasiequation $s_1=t_1,\ldots, s_n=t_n\Rightarrow s=t\in M$\,\,\,
and equation $s=t\in E(x,y)$:
\begin{align}
\Delta x_1=x_1,\ldots,\Delta x_n=x_n&\Rightarrow \Delta f(x_1,\ldots, x_n)=f(x_1,\ldots,x_n)\tag{\ref{Eq:QuasiOper}}\\
%& \mbox{ for each $f\in\Sigma$}\nonumber\\[0.3cm]
\Delta x_1=x_1,\ldots, \Delta x_m=x_m,s_1=t_1,\ldots, s_n&=t_n\Rightarrow s=t\tag{\ref{eq:IQuasi}}\\
%\mbox{ for}&\mbox{ each } s_1=t_1,\ldots, s_n=t_n\Rightarrow s=t\in M\nonumber\\[0.3cm]
\Delta x=\Delta y,\,\,\,\nabla x=\nabla y\,\,&\Rightarrow\, x=y\tag{\ref{eq:Moisil}}
\end{align}

\medskip
\noindent
together with the equations 
\begin{align}
\tag{\ref{eq:order}}t(\Delta x,\nabla x)&=s(\Delta x,\nabla x) \\
\quad\quad\quad\quad\quad\tag{\ref{eq:DeltaF}}\Delta(f(x_1,\ldots, x_n))&=f(t_{f,1}(x_1),\ldots, t_{f,n}(x_n))\\ 
\tag{\ref{eq:NablaF}}\nabla(f(x_1,\ldots, x_n))&=f(s_{f,1}(x_1),\ldots, s_{f,n}(x_n)),
\\
\tag{\ref{eq:extraoper}}\Delta \iii=0, \,\,\nabla \iii=1, \,\,\Delta \Delta x=\Delta x,
 \,\,\nabla \Delta x&=\Delta x,\,\,\nabla \nabla x=\nabla x, \,\, \Delta\nabla x=\nabla x. 
\end{align}
\end{corollary}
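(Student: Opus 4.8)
The plan is to prove the two inclusions $\class{IQ}\subseteq\Mod(\mathcal A)$ and $\Mod(\mathcal A)\subseteq\class{IQ}$, where $\mathcal A$ denotes the displayed set of quasiequations \eqref{Eq:QuasiOper}--\eqref{eq:extraoper}. The first inclusion is essentially already in hand: the proof of Theorem~\ref{Theo:GeneralFunctors} verifies that every member of $\mathcal I(\class Q)$ satisfies each quasiequation of $\mathcal A$ (indeed \eqref{Eq:QuasiOper}, \eqref{eq:IQuasi}, \eqref{eq:Moisil} are checked in parts (i)--(ii) and \eqref{eq:order}, \eqref{eq:DeltaF}, \eqref{eq:NablaF}, \eqref{eq:extraoper} in part (iv)). Since quasiequations are preserved under the class operators $\mathbb S$, $\mathbb P$, $\mathbb P_u$ that generate $\class{IQ}=\mathbb Q(\mathcal I(\class Q))$, they hold throughout $\class{IQ}$.

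For the converse inclusion, I would fix an arbitrary algebra $J$ of type $\Sigma\cup\{\Delta,\nabla,\iii\}$ with $J\models\mathcal A$ and show that $J$ embeds into $\mathcal I(\mathcal C(J))$ for a suitable $\mathcal C(J)\in\class Q$. First set $C(J)=\{x\in J\mid \Delta x=x=\nabla x\}$ and argue, using only \eqref{Eq:QuasiOper}, that $C(J)$ is closed under every $f^{J}$, exactly as in part~(i) of the proof of Theorem~\ref{Theo:GeneralFunctors}; this makes $\mathcal C(J)$, with operations $f^{\mathcal C(J)}=f^{J}\restrict C(J)$, a genuine $\Sigma$-algebra. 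Next, since each $a\in C(J)$ automatically discharges the premises $\Delta x_i=x_i$ of \eqref{eq:IQuasi}, every quasiequation of $M$ holds in $\mathcal C(J)$; because $M$ axiomatizes $\class Q$, this yields $\mathcal C(J)\in\class Q$.

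It then remains to check that $\gamma_{J}\colon a\mapsto[\Delta a,\nabla a]$ is a well-defined injective homomorphism into $\mathcal I(\mathcal C(J))$. Well-definedness amounts to $\Delta a\le\nabla a$ in $\mathcal C(J)$, which follows from \eqref{eq:order} since $E(x,y)$ determines the order and all arguments lie in $C(J)$, where the interpretations of $\Sigma$-terms in $\mathcal C(J)$ and in $J$ coincide. Preservation of $\iii,\Delta,\nabla$ follows from \eqref{eq:extraoper}, while preservation of each $f\in\Sigma$ follows from \eqref{eq:DeltaF}--\eqref{eq:NablaF} together with the definition of $f^{\mathcal I(\mathcal C(J))}$ (matching the terms $t_{f,k},s_{f,k}$ with the elements $c_k,d_k$ of that definition); injectivity is immediate from \eqref{eq:Moisil}. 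With $\gamma_J$ an embedding and $\mathcal I(\mathcal C(J))\in\mathcal I(\class Q)$, we conclude $J\in\mathbb S(\mathcal I(\class Q))\subseteq\class{IQ}$.

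The only genuinely new point beyond Theorem~\ref{Theo:GeneralFunctors}, and the step I expect to require the most care, is the transfer of the quasiequations of $M$ from $J$ to $\mathcal C(J)$ via \eqref{eq:IQuasi}: one must check that restricting to $C(J)$ both discharges the added hypotheses $\Delta x_i=x_i$ and leaves the $\Sigma$-term interpretations unchanged, so that satisfaction of $M$ in $\mathcal C(J)$ genuinely follows and hence $\mathcal C(J)\in\class Q$. Everything else is a re-reading of the proof of Theorem~\ref{Theo:GeneralFunctors}, observing that each step there invokes only quasiequations belonging to $\mathcal A$, so the same constructions go through verbatim under the weaker hypothesis $J\models\mathcal A$.
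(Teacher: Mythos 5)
Your proposal is correct and follows essentially the same route as the paper's own proof: the forward inclusion is read off from the verifications inside Theorem~\ref{Theo:GeneralFunctors} (plus preservation of quasiequations under the quasivariety operators, which the paper leaves implicit), and the converse repeats the construction of $\mathcal C(J)$ and the embedding $\gamma_J$, using \eqref{Eq:QuasiOper} for closure, \eqref{eq:IQuasi} to place $\mathcal C(J)$ in $\class Q$, \eqref{eq:order} for well-definedness, \eqref{eq:Moisil} for injectivity, and \eqref{eq:DeltaF}--\eqref{eq:extraoper} for the homomorphism property, concluding $J\in\mathbb{IS}(\mathcal I(\class Q))\subseteq\class{IQ}$. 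The step you flag as delicate (transferring $M$ from $J$ to $\mathcal C(J)$) is handled exactly as you describe, and your explicit observation that $\Sigma$-term interpretations agree on $C(J)$ is a correct filling-in of what the paper asserts in one line.
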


\begin{proof}
From the proof of Theorem~\ref{Theo:GeneralFunctors}
it follows that  each algebra in $\class{IQ} $ satisfies 
\eqref{Eq:QuasiOper}-\eqref{eq:extraoper}.
Conversely, let  $J$ be an algebra satisfying
 \eqref{Eq:QuasiOper}-\eqref{eq:extraoper}.
Mimicking the proof of  Theorem~\ref{Theo:GeneralFunctors}(i), 
from  \eqref{Eq:QuasiOper} it follows that the set
 $C(J)=\{a\in J\mid \Delta a=a=\nabla a\}$ is 
 closed under $f^{J}$ for each $f\in \Sigma$.
By  $\eqref{eq:IQuasi}$,  the algebra \, $\mathcal C(J)$ 
equipped with all restrictions  $f^{J}\restrict {C(J)}$
 belongs to $\class Q$.
Further, from
 \eqref{eq:order} 
 we get  $\Delta a\leq \nabla a$ in $\mathcal C({J})$, and 
 hence the map $\gamma_{J}(a)=
 [\Delta a, \nabla a]\colon J\to \mathcal{I}(\mathcal C(J))$
  is well defined.
By  \eqref{eq:Moisil}, $\gamma_J$ is one-to-one;  
by \eqref{eq:DeltaF}-\eqref{eq:extraoper}, $\gamma_J$ is a homomorphism. 
In conclusion,   $J\in\mathbb{IS}(\mathcal{I}(\class Q))
= \class{IQ}$.
\end{proof}

 The following
 theorem  gives 
  necessary and sufficient conditions on a quasivariety $\class{Q}$ for its interval functor 
   $\mathcal I$ to be a categorical equivalence.

\begin{theorem}\label{Theo:EquivConditions}
The following conditions  are equivalent:

\begin{itemize}
\item[(i)] The interval  functor $\mathcal{I}\colon\class Q\to \class{IQ}$ is
 a categorical equivalence.
 
 \medskip
\item[(ii)] $\class{IQ}=\mathbb{I}(\mathcal{I}(\class{Q}))= $ the class of isomorphic
copies of algebras of $\mathcal{I}(\class{Q})$.

 \medskip
\item[(iii)]  For
 each $A\in \class Q$ the set $C(\mathcal{I}(A))
 =\{[a,a]\mid a\in A\}$ generates $\mathcal{I}(A)$.

 \medskip
\item[(iv)]  For some $(\Sigma\cup\{\Delta,\nabla,\iii\})$-term 
$t(y,z)$,  the equation
$t(\Delta(x),\nabla(x) )=x$ is satisfied by  every algebra of 
$\class{IQ}$.
\end{itemize}

\smallskip
\noindent
In particular, if  $\class Q$ is a variety,  these conditions
 (i)-(iv)   are also  equivalent to

\smallskip
\begin{itemize}
\item[(v)] $\mathbb{V}(\mathcal{I}(\class{Q}))=\mathbb{I}(\mathcal{I}(\class{Q}))$, where $\mathbb{V}(\mathcal{I}(\class{Q}))$ is the variety generated by $\mathcal{I}(\class{Q})$.
\end{itemize}
\end{theorem}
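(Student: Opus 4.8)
The plan is to leverage the two natural transformations already produced in Theorem~\ref{Theo:GeneralFunctors}: the natural isomorphism $\iota\colon {\rm I}_{\class Q}\to \mathcal C\circ\mathcal I$ and the natural transformation $\gamma\colon {\rm I}_{\class{IQ}}\to \mathcal I\circ\mathcal C$ all of whose components $\gamma_J$ are injective homomorphisms. Since an equivalence is exactly a pair of functors with invertible unit and counit, $\mathcal I$ is a categorical equivalence if and only if $\gamma$ is a natural isomorphism, i.e.\ if and only if every $\gamma_J$ is \emph{onto}. Two observations drive the argument. First, $\gamma_{\mathcal I(A)}$ is always onto, hence an isomorphism: given $[[p,p],[q,q]]\in \mathcal I(\mathcal C(\mathcal I(A)))$ with $p\leq q$ one has $\gamma_{\mathcal I(A)}([p,q])=[[p,p],[q,q]]$. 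Second, surjectivity of $\gamma$ transfers along isomorphisms, for if $\phi\colon J\to J'$ is an isomorphism then the naturality square reads $\gamma_{J'}\circ\phi=\mathcal I(\mathcal C(\phi))\circ\gamma_J$ with $\mathcal I(\mathcal C(\phi))$ invertible, so $\gamma_{J'}$ onto forces $\gamma_J$ onto. I would then establish the cycle (i)$\Rightarrow$(ii)$\Rightarrow$(iii)$\Rightarrow$(iv)$\Rightarrow$(i).

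For (i)$\Rightarrow$(ii): an equivalence is essentially surjective, so each $J\in\class{IQ}$ is isomorphic to some $\mathcal I(A)$; as $\class{IQ}$ is closed under isomorphic images, this gives $\class{IQ}=\mathbb I(\mathcal I(\class Q))$. For (ii)$\Rightarrow$(iii): fix $A\in\class Q$ and let $B$ be the subalgebra of $\mathcal I(A)$ generated by its center $C(\mathcal I(A))$, so $B\in\class{IQ}$ and $C(B)=C(\mathcal I(A))$, whence the inclusion $j\colon B\hookrightarrow\mathcal I(A)$ induces the identity under $\mathcal C$. By (ii) we may write $B\cong\mathcal I(A'')$, so $\gamma_B$ is onto by the transfer observation; plugging this into the naturality square $\gamma_{\mathcal I(A)}\circ j=\mathcal I(\mathcal C(j))\circ\gamma_B$, in which both $\gamma_{\mathcal I(A)}$ and $\mathcal I(\mathcal C(j))$ are isomorphisms, forces $j$ to be onto, i.e.\ $B=\mathcal I(A)$. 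This is precisely (iii).

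The crux is (iii)$\Rightarrow$(iv), where one must manufacture a \emph{single} term. I would pass to the free algebra $F$ on one generator $x$ in $\class{IQ}$. Note first that (iii) makes every $\gamma_J$ onto: $\gamma_J(J)$ is a subalgebra of $\mathcal I(\mathcal C(J))$ containing $\gamma_J(C(J))=C(\mathcal I(\mathcal C(J)))$, which generates by (iii) applied to $\mathcal C(J)$. Thus $\gamma_F$ is an isomorphism and, pulling back, $C(F)$ generates $F$. The technical heart is a normal-form Claim, proved by simultaneous induction on the structure of a term $s$: in any algebra of $\class{IQ}$ and for any element $g$, both $\Delta\, s(g)$ and $\nabla\, s(g)$ rewrite, using only \eqref{eq:DeltaF}, \eqref{eq:NablaF} and \eqref{eq:extraoper}, as $\Sigma$-terms in $\Delta g$ and $\nabla g$ (base cases: $\Delta x$, $\nabla x$, and the constants $\Delta\iii=0$, $\nabla\iii=1$; the inductive step pushes $\Delta,\nabla$ through each $f\in\Sigma$ via \eqref{eq:DeltaF}--\eqref{eq:NablaF}). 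Writing $x=w(c_1,\dots,c_k)$ with $c_i\in C(F)$ and using $c_i=\Delta c_i$, the Claim rewrites each $c_i$ as a $\Sigma$-term in $\Delta x,\nabla x$, so $x=t(\Delta x,\nabla x)$ for one term $t$; freeness of $F$ lifts this to the identity of (iv) throughout $\class{IQ}$. Then (iv)$\Rightarrow$(i) is quick: for $[\alpha,\beta]\in\mathcal I(\mathcal C(J))$ (so $\alpha,\beta\in C(J)$), set $w=t^{J}(\alpha,\beta)$; applying the homomorphism $\gamma_J$ and the identity $t(\Delta u,\nabla u)=u$ with $u=[\alpha,\beta]$ gives $\gamma_J(w)=t([\alpha,\alpha],[\beta,\beta])=[\alpha,\beta]$, so every $\gamma_J$ is onto and $\mathcal I$ is an equivalence.

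For the variety clause, (v)$\Rightarrow$(ii) is immediate from $\mathbb I\subseteq\mathbb Q\subseteq\mathbb V$. For (iv)$\Rightarrow$(v) when $\class Q$ is a variety, I would show $\class{IQ}$ is itself a variety by rewriting the quasi-equational axiomatization of the preceding Corollary equationally: \eqref{Eq:QuasiOper} is a consequence of \eqref{eq:DeltaF}--\eqref{eq:extraoper}; the relativized form of \eqref{eq:IQuasi} attached to an equational axiom $s=t$ of $\class Q$ is equivalent to the genuine equation $s(\Delta\vec x)=t(\Delta\vec x)$, because $\Delta$ ranges over the center; and \eqref{eq:Moisil} follows from the equation $t(\Delta x,\nabla x)=x$ of (iv). Hence $\class{IQ}$ is closed under homomorphic images, so $\mathbb V(\mathcal I(\class Q))=\class{IQ}=\mathbb I(\mathcal I(\class Q))$ by (ii). The main obstacle I anticipate is exactly (iii)$\Rightarrow$(iv): upgrading the pointwise ``center generates'' statements to one uniform term, which needs both the reduction to the one-generated free algebra and the inductive normal-form Claim collapsing every $\Delta$/$\nabla$-headed term to a $\Sigma$-term in $\Delta x,\nabla x$ alone.
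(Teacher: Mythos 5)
Your proposal is correct, and its skeleton is the paper's own: the cycle (i)$\Rightarrow$(ii)$\Rightarrow$(iii)$\Rightarrow$(iv)$\Rightarrow$(i), with the term in (iv) extracted from the one-generated free algebra of $\class{IQ}$, and the variety clause (v) handled through (ii). Three local differences are worth recording. In (ii)$\Rightarrow$(iii), where the paper assembles an explicit isomorphism $k=\mathcal{I}(h^{-1})\circ f$ onto $\mathcal{I}(A)$ and checks that it agrees with the inclusion on the generators $[a,a]$, you instead chase the naturality square for $\gamma$, using that $\gamma_{\mathcal{I}(A)}$ is always onto and that surjectivity of $\gamma$ transfers along isomorphisms; this is the same content with tidier bookkeeping. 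In (iii)$\Rightarrow$(iv), your normal-form Claim (inductively pushing $\Delta,\nabla$ through the $\Sigma$-operations via \eqref{eq:DeltaF}, \eqref{eq:NablaF}, \eqref{eq:extraoper}) supplies exactly the missing justification for the step the paper asserts with only one clause, namely that $\mathcal{C}(\F(x))$ is generated by $\Delta x$ and $\nabla x$ ``since $\F(x)$ is generated by $x$''; your write-up is more complete at the one technically delicate point of the whole proof. For (iv)$\Rightarrow$(v) your route is genuinely different: the paper takes $K\in\mathbb{V}(\mathcal{I}(\class{Q}))$, writes it as a homomorphic image of some $J\in\class{IQ}$, and verifies element-wise that $\gamma_K$ is a well-defined injective homomorphism, all needed identities (including $t(\Delta x,\nabla x)=x$) being preserved by quotients; you instead show that under (iv) the quasiequational axiomatization of $\class{IQ}$ collapses to an equational one---\eqref{Eq:QuasiOper} follows from \eqref{eq:DeltaF} and \eqref{eq:extraoper}; \eqref{eq:IQuasi} attached to an equational axiom $s=t$ of $\class{Q}$ is equivalent, modulo $\Delta\Delta x=\Delta x$, to the equation $s(\Delta\vec{x})=t(\Delta\vec{x})$; and \eqref{eq:Moisil} follows from $t(\Delta x,\nabla x)=x$---so that $\class{IQ}$ is itself a variety and (v) is immediate. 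Your version buys a sharper by-product (an explicit equational basis for $\class{IQ}$ when $\class{Q}$ is a variety), at the cost of leaning on the axiomatization corollary, which the paper states for a finite set $M$ of quasiequations although its proof nowhere uses finiteness; the paper's element-wise argument is shorter and self-contained.
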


\begin{proof}
(i)$\Rightarrow$(ii). Trivially, if $\mathcal{I}$ is
 an equivalence every $J\in \class{IQ}$ belongs to $\mathbb{I}(\mathcal{I}(\class Q))$.

\smallskip
(ii)$\Rightarrow$(iii). Let $A\in \class{Q}$ and $J\subseteq \mathcal{I}(A)$ be the subalgebra generated by $\{[a,a]\mid a\in A\}$. Then $C(J)=\{[a,a]\mid a\in A\}$ and $\mathcal{C}(J)\cong A$.
Let $g\colon J\to  \mathcal{I}(A)$ be the inclusion map.
 By (ii), there exists $B\in\class Q$, and an isomorphism 
$f\colon J\to \mathcal{I}(B)$. 
By Theorem~\ref{Theo:GeneralFunctors}(iii), 
$B\cong\mathcal{C}(\mathcal I(B))=f(\mathcal{C}(J))\cong \mathcal{C}(J)\cong A$. 
The map $h\colon A \to B$ defined by $h(a)=b$ whenever 
$f([a,a])=[b,b]$ is an isomorphism. 
Therefore,   $k=f\circ \mathcal{I}(h^{-1})\colon J\to \mathcal{I}(A)$ is 
an isomorphism onto  $\mathcal{I}(A)$, and  
\[k([a,a])=f\circ \mathcal{I}(h^{-1})([a,a])=f([h^{-1}(a),h^{-1}(a)])=[a,a].\] 
Having thus proved  $k=g$, we conclude that $J=g(J)=k(J)=\mathcal{I}(A)$.

\smallskip
(iii)$\Rightarrow$(iv). 
Let $\F(x)\in \class{IQ}$ be the free algebra with one free generator. 
By Theorem~\ref{Theo:GeneralFunctors}(iv), $\F(x)$ is isomorphic to a 
subalgebra of $\mathcal{I}(\mathcal C(\F(x)))$. 
By (iii), $\mathcal{I}(\mathcal C(\F(x)))$ is generated by 
$C(\mathcal {I}(\mathcal C(\F(x))))$,  $C(\F(x)) $ generates $\F(x)$ 
and there exists a  $(\Sigma\cup\{\Delta,\nabla,i\})$-term
 $s(x_1,\ldots, x_n)$  together with  elements $a_1,\ldots, a_n$ in $C(\F(x))$ 
such that $s^{\F(x)}(a_1,\ldots, a_n)=x$.
Since $\F(x)$ is generated by $x$, $\mathcal C(\F(x))$ is generated 
by $\Delta x$ and $\nabla x$. Thus for each $i\in\{1,\ldots,n\}$ 
there is a term $t_i(y,z)$ such that $a_i=t_i^{\F(x)}(\Delta x,\nabla x)$. 
Therefore, the term $t(y,z)=s(t_1(y,z),\ldots, t_n(y,z))$ 
satisfies
$t^{\F(x)}(\Delta x,\nabla x)=x$. 
Since $\F(x)$ is the free algebra
 in 
$\mathbb{Q}(\mathcal{I}(\class Q))$
 with free generator $x$,
the equation $t(\Delta x,\nabla x)=x$ is 
satisfied by  every algebra of
$\mathbb{Q}(\mathcal{I}(\class Q))$.

\smallskip
(iv)$\Rightarrow$(i). 
Since, by Theorem~\ref{Theo:GeneralFunctors}(iii)  $\iota_{A}$ is an isomorphism for each $A\in \class Q$, it is enough to check 
 that $\gamma_{J}$ is an isomorphism for each $J\in \class{IQ}$. By Theorem~\ref{Theo:GeneralFunctors}(iv), $\gamma_{J}$ is one-to-one. 
 Letting $[a,b]\in \mathcal{I}(\mathcal{C}(J))$ we can write 
\begin{align*}
\gamma_{J}(t^{J}(a,b))&=t^{\mathcal{I}(\mathcal{C}(J))}(\gamma_{J}(a),\gamma_{J}b)
=t^{\mathcal{I}(\mathcal{C}(J))}([\Delta^J a,\nabla^J a],[\Delta^J b,\nabla^J b])\\
&=t^{\mathcal{I}(\mathcal{C}(J))}([ a,a],[ b, b])
=t^{\mathcal{I}(\mathcal{C}(J))}(\Delta^{\mathcal{I}(\mathcal{C}(J))}[ a,b],\nabla^{\mathcal{I}(\mathcal{C}(J))}[ a, b])\\
&=[a,b],
\end{align*}
and  $\gamma_J$ is onto $\mathcal{I}(\mathcal{C}(J))$.

\smallskip
Trivially (v) implies (ii) even if  $\class Q$ is not a variety. 
Now  assume $\class Q$ is a variety and  (i)-(iv) holds. 
Let $K\in \mathbb{V}(\mathcal{I}(\class Q))$. 
There exists $J\in \class{IQ}$ and a homomorphism $h\colon J\to K$ onto $K$.
Since  $C(K)=\{x\in K\mid \Delta x=\nabla x=x\}=h(C(J))$
then  $C(K)$ is closed under $f^{K}$ for each $f\in \Sigma$. 
Let $\mathcal{C}(K)$ be the $\Sigma$-algebra whose universe is 
$C(K)$ and whose operations  are given by 
restricting to  $C(K)$ the operations of $K$. 
 Thus the map $h\restrict{C(J)}\colon \mathcal{C}(J)\to \mathcal{C}(K)$ is a 
homomorphism onto $\mathcal{C}(K)$. 
Since $\class Q$ is a variety, $\mathcal{C}(K)\in \class Q$. 
Since $J$ satisfies    \eqref{eq:DeltaF}--\eqref{eq:NablaF}
then so does $K$. As a consequence, the map 
$\gamma_{K}\colon K\to \mathcal{I}(\mathcal{C}(K))$
defined by $\gamma_{K}(a)=[\Delta a,\nabla a]$ is a 
homomorphism. 
Finally, let 
  $a,b\in K$ be such that 
$\gamma_{K}(a)=\gamma_{K}(b)$. Equivalently,  $\Delta a=\Delta b$ and 
$\nabla a=\nabla b$. 
 Since $K\in \mathbb{V}(\mathcal{I}(A))$, recalling (iv)
we can write 
\[a=t^{K}(\Delta^K a,\nabla^K a)=t^{K}(\Delta^K b,\nabla^K b)=b,\] 
which shows that
$K\in \mathbb{IS}(\mathcal{I}(\class Q))$. 
By (iii),
  $\class{IQ}=\mathbb{IS}(\mathcal{I}(\class Q))
  =\mathbb{I}(\mathcal{I}(\class Q))$. 
  We have proved that condition (v) follows from
   (i)-(iv), as desired.
\end{proof}

\begin{corollary}
Let  $\class R\subseteq \class Q$ be a subquasivariety of $\class Q$ and $\class{IR}=\mathbb{Q}(\mathcal{I}(\class R))$. If the functor $\mathcal{I}\colon\class Q\to \class{IQ}$ 
is a categorical equivalence, then  its restriction
 $\class R$ is a categorical equivalence between  $\class R$ and 
  $\class{IR}$.
\end{corollary}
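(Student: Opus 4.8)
The plan is to reduce the statement to the equational characterization of categorical equivalence provided by Theorem~\ref{Theo:EquivConditions}, specifically the equivalence of conditions (i) and (iv). The essential point is that condition (iv) is purely equational, so it transfers automatically from a quasivariety of interval algebras to any subquasivariety contained in it.

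First I would record that $\class R$ inherits the standing hypotheses of the section. Being a subquasivariety of $\class Q$, it is itself a $\Sigma$-quasivariety of $\rho$-poalgebras; and since the notion of an order being determined by $E(x,y)$ (Definition~\ref{definition:order}) is a condition on individual algebras, the very same set $E(x,y)$ determines the order of every $A\in\class R\subseteq\class Q$. Thus Theorem~\ref{Theo:EquivConditions} applies verbatim to $\class R$ and $\class{IR}=\mathbb{Q}(\mathcal{I}(\class R))$, with $\class R$ playing the role of $\class Q$.

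Next, since $\mathcal{I}\colon\class Q\to\class{IQ}$ is a categorical equivalence, the implication (i)$\Rightarrow$(iv) of Theorem~\ref{Theo:EquivConditions} yields a $(\Sigma\cup\{\Delta,\nabla,\iii\})$-term $t(y,z)$ such that the equation $t(\Delta x,\nabla x)=x$ holds in every algebra of $\class{IQ}$. From $\class R\subseteq\class Q$ I get $\mathcal{I}(\class R)\subseteq\mathcal{I}(\class Q)$, and by monotonicity of the quasivariety operator $\mathbb{Q}$ it follows that $\class{IR}=\mathbb{Q}(\mathcal{I}(\class R))\subseteq\mathbb{Q}(\mathcal{I}(\class Q))=\class{IQ}$. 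Consequently every algebra of $\class{IR}$ also satisfies $t(\Delta x,\nabla x)=x$, so the same term $t$ witnesses condition (iv) for $\class R$.

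Finally, I would invoke the implication (iv)$\Rightarrow$(i) of Theorem~\ref{Theo:EquivConditions}, applied to $\class R$ in place of $\class Q$, to conclude that $\mathcal{I}\colon\class R\to\class{IR}$ is a categorical equivalence. There is no serious computational obstacle: the whole argument rests on the single observation that, among the four equivalent conditions of Theorem~\ref{Theo:EquivConditions}, it is the equational condition (iv) that descends to subquasivarieties, combined with the inclusion $\class{IR}\subseteq\class{IQ}$. Conditions (ii) and (iii) refer to the full classes $\class{IQ}$ and $\mathcal{I}(\class Q)$ and would be awkward to transfer directly to $\class R$, so identifying (iv) as the working criterion is precisely what makes the proof clean.
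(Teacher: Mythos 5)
Your proposal is correct and is essentially the paper's intended argument: the paper states this corollary without proof as an immediate consequence of Theorem~\ref{Theo:EquivConditions}, and your route---extract the term $t$ from (i)$\Rightarrow$(iv) for $\class Q$, observe $\class{IR}=\mathbb{Q}(\mathcal{I}(\class R))\subseteq\mathbb{Q}(\mathcal{I}(\class Q))=\class{IQ}$ so the equation $t(\Delta x,\nabla x)=x$ holds throughout $\class{IR}$, then apply (iv)$\Rightarrow$(i) to $\class R$---is exactly the expected one. One minor quibble: your closing claim that condition (iii) would be ``awkward'' to transfer is not quite right, since (iii) is a pointwise condition on each $A\in\class Q$ and hence descends to $\class R$ just as easily, giving an equally short alternative; but this does not affect the correctness of your proof.
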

\begin{corollary}
Every quasivariety of MV-algebras is categorically equivalent to the quasivariety of its interval algebras.
\end{corollary}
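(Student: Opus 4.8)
The plan is to obtain this corollary as a direct instance of the preceding corollary, once we recognize that the full variety $\MV$ of MV-algebras is itself an eligible $\class Q$ in the sense of the notational convention of this section. First I would check that $\MV$ fits that framework. Equip each MV-algebra with its natural order $\leq$, and take the polarity $\rho$ assigning $(+,+)$ to both $\oplus$ and $\odot$ and $(-)$ to $\neg$; then by the monotonicity properties recorded in \cite[\S 1.1]{cigdotmun} every MV-algebra is a bounded $\rho$-poalgebra, with bounds $0,1$. Moreover, as observed just after Definition~\ref{definition:order}, the single equation $\neg x\oplus y=1$ determines the order of each $A\in\MV$. Thus $\class Q=\MV$ is a $\Sigma$-variety (hence quasivariety) of $\rho$-poalgebras possessing an order-determining set $E(x,y)=\{\neg x\oplus y=1\}$, exactly as the convention requires.

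Next I would identify the quasivariety $\class{IQ}=\mathbb{Q}(\mathcal I(\MV))$ with the variety $\IMV$ of Definition~\ref{definition:imv}. One inclusion is immediate: by Proposition~\ref{proposition:trivial}(i) each $\mathcal I(A)$ is an IMV-algebra, and since $\IMV$ is a variety (so a fortiori a quasivariety) we get $\mathbb{Q}(\mathcal I(\MV))\subseteq\IMV$. For the reverse inclusion I would invoke the Representation Theorem~\ref{theorem:representation}: every IMV-algebra $J$ is isomorphic to $\mathcal I(\mathcal C(J))\in\mathcal I(\MV)$, whence $\IMV=\mathbb{I}(\mathcal I(\MV))\subseteq\mathbb{Q}(\mathcal I(\MV))$. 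Therefore $\class{IQ}=\IMV$, and by Theorem~\ref{theorem:equivalence} the interval functor $\mathcal I\colon\MV\to\class{IQ}$ is a categorical equivalence. Alternatively, one may verify condition (iv) of Theorem~\ref{Theo:EquivConditions} directly, since \eqref{equation:great} exhibits the term $t(y,z)=y\oplus(\iii\odot z\odot\neg y)$ satisfying $t(\Delta x,\nabla x)=x$ in every IMV-algebra.

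With these two facts in hand the corollary follows at once. Given any quasivariety $\class R$ of MV-algebras, it is by definition a subquasivariety $\class R\subseteq\MV=\class Q$. Applying the preceding corollary to this $\class Q$---whose interval functor is a categorical equivalence by the previous paragraph---yields that the restriction of $\mathcal I$ to $\class R$ is a categorical equivalence between $\class R$ and $\class{IR}=\mathbb{Q}(\mathcal I(\class R))$, the quasivariety of interval algebras of $\class R$. The only point demanding care, and where I expect essentially all the content to lie, is the identification $\class{IQ}=\IMV$ in the second step: it is precisely here that the Representation Theorem is used to pass from the abstractly generated quasivariety $\mathbb{Q}(\mathcal I(\MV))$ to the concretely axiomatized variety $\IMV$ on which Theorem~\ref{theorem:equivalence} is phrased. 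Everything else is bookkeeping within the machinery already developed.
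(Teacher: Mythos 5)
Your proposal is correct and is essentially the argument the paper intends (the paper states this corollary without proof, as an immediate instance of the preceding restriction corollary applied to $\class Q=\MV$): you verify that $\MV$ fits the $\rho$-poalgebra framework with order determined by $\neg x\oplus y=1$, identify $\mathbb{Q}(\mathcal I(\MV))$ with $\IMV$ via Proposition~\ref{proposition:trivial}(i) and Theorem~\ref{theorem:representation}, and invoke Theorem~\ref{theorem:equivalence} (or, as in Remark~\ref{remark:salvagente}, condition (iv) of Theorem~\ref{Theo:EquivConditions} via the term $t(y,z)=y\oplus(\iii\odot z\odot\neg y)$ from \eqref{equation:great}) before applying the restriction corollary. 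All steps check out, including the subtle point that the general interval construction on $\MV$ coincides with the Minkowski-sum construction of Section~\ref{Sec:IMV}, which the paper's unnumbered remark in Section~\ref{section:general} guarantees via Proposition~\ref{proposition:riesz}.
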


The next two corollaries show that $\mathcal{I}$ is a categorical equivalence for several classes of lattice ordered algebras having an important role in (many-valued) logic.

\begin{corollary}
\label{corollary:extensive}
Suppose the quasivariety  $\class Q$ {\em has a  lattice reduct.}
In other words,  there are  binary operation symbols $\wedge ,\vee \in\Sigma$ such that 
for each algebra $A\in \class Q$ the $(\wedge,\vee)$-reduct of $A$ is a lattice.
Then   the interval functor $\mathcal I\colon \class Q\to \class{IQ}$ is an equivalence.
\end{corollary}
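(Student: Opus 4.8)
The plan is to verify condition (iv) of Theorem~\ref{Theo:EquivConditions}, i.e.\ to exhibit a single $(\Sigma\cup\{\Delta,\nabla,\iii\})$-term $t(y,z)$ for which the equation $t(\Delta x,\nabla x)=x$ holds throughout $\class{IQ}$; the equivalence of $\mathcal I$ then follows at once from the implication (iv)$\Rightarrow$(i) established there. Since a lattice reduct has its order determined by the equation $x\wedge y=x$ (as recorded after Definition~\ref{definition:order}), I would take this equation as $E(x,y)$, so that the $\rho$-order $\leq$ of each $A\in\class Q$ is exactly the lattice order. Consequently $\wedge$ and $\vee$ are order-preserving in both arguments, i.e.\ $\rho(\wedge)=\rho(\vee)=(+,+)$, and the defining clause for $f^{\mathcal I(A)}$ yields the pointwise formulas $[a_1,b_1]\wedge^{\mathcal I(A)}[a_2,b_2]=[a_1\wedge a_2,\,b_1\wedge b_2]$ and $[a_1,b_1]\vee^{\mathcal I(A)}[a_2,b_2]=[a_1\vee a_2,\,b_1\vee b_2]$.

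With these formulas in hand, the candidate term is
\[
t(y,z)=y\vee(\iii\wedge z).
\]
First I would check that $t$ recovers intervals in every $\mathcal I(A)$ with $A\in\class Q$: for $x=[a,b]$ one has $\Delta x=[a,a]$, $\nabla x=[b,b]$, and $\iii=[0^A,1^A]$, whence $\iii\wedge\nabla x=[0^A\wedge b,\,1^A\wedge b]=[0^A,b]$, and then $\Delta x\vee(\iii\wedge\nabla x)=[a\vee 0^A,\,a\vee b]=[a,b]=x$, using $0^A\leq a$ and $a\leq b$. Thus $\mathcal I(A)\models t(\Delta x,\nabla x)=x$ for every $A\in\class Q$.

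It then remains to propagate this equation from the generators $\mathcal I(\class Q)$ to all of $\class{IQ}$. Since $\class{IQ}=\mathbb Q(\mathcal I(\class Q))\subseteq\mathbb V(\mathcal I(\class Q))=\mathbb{HSP}(\mathcal I(\class Q))$ and identities are preserved by $\mathbb H$, $\mathbb S$ and $\mathbb P$, the equation $t(\Delta x,\nabla x)=x$ holds in every algebra of $\class{IQ}$, establishing condition (iv) and hence the corollary. I do not expect a genuine obstacle here: the only point requiring care is the bookkeeping that identifies the polarities of $\wedge,\vee$ as $(+,+)$ and thereby makes $\iii\wedge\nabla x$ collapse to the lower interval $[0^A,b]$; once that is settled, the verification of $t$ and the transfer to $\class{IQ}$ are routine.
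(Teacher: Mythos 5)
Your proof is correct and takes essentially the same approach as the paper: both verify condition (iv) of Theorem~\ref{Theo:EquivConditions} by exhibiting an explicit lattice term, the paper using the dual term $t(y,z)=(\iii\vee y)\wedge z$ where you use $t(y,z)=y\vee(\iii\wedge z)$. Your additional steps (the polarity bookkeeping $\rho(\wedge)=\rho(\vee)=(+,+)$ and the transfer of the identity from $\mathcal{I}(\class Q)$ to all of $\class{IQ}$) are sound and merely spell out what the paper's one-line proof leaves implicit.
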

\begin{proof}
Directly from  Theorem~\ref{Theo:EquivConditions}, using the $\class{IQ}$-term $t(y,z)=(\iii \vee y)\wedge z$.
\end{proof}

\begin{corollary}
\label{corollary:listone}
 For each subquasivariety of the
following varieties,  the  functor $\mathcal I$ is an equivalence:
\begin{itemize}
 \item[---] Heyting algebras \cite[p.~44]{bursan},  BL-algebras \cite{[21]},
  MTL--algebras \cite{GalJipKowOno},  and, more generally,  residuated lattices \cite{GalJipKowOno}.
 \item[---] Modal algebras \cite{ChaZak}.
 \end{itemize}
\end{corollary}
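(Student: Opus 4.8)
The plan is to reduce everything to Corollary \ref{corollary:extensive}, whose sole hypothesis is that the quasivariety in question possesses a lattice reduct. First I would observe that each of the listed varieties is a bounded $\rho$-poalgebra variety in the sense of Definition \ref{definition:polarity}, whose underlying order is the lattice order determined by the single equation $x \wedge y = x$ taken as $E(x,y)$. Concretely, for Heyting algebras, residuated lattices, BL-algebras and MTL-algebras the language contains the lattice connectives $\wedge,\vee$ together with the bounds $0,1$, and every remaining operation is monotone or antimonotone in each argument: the product (fusion) is monotone in both inputs, while each residuum is antimonotone in its first and monotone in its second argument, which fixes the polarity $\rho$. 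For modal algebras the underlying Boolean algebra supplies the lattice reduct and the bounds, the Boolean operations carry the usual polarity, and the modal operator is monotone. Thus each listed variety $\class Q$ meets the hypothesis of Corollary \ref{corollary:extensive}, and its interval functor $\mathcal I\colon \class Q\to \class{IQ}$ is a categorical equivalence, witnessed by the term $t(y,z)=(\iii \vee y)\wedge z$.

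To descend from the full varieties to their subquasivarieties I would use the fact that possession of a lattice reduct is inherited downward. If $\class R$ is a subquasivariety of one of these classes $\class Q$, then every algebra of $\class R$ is an algebra of $\class Q$ over the same language $\Sigma$, so its $(\wedge,\vee)$-reduct is still a lattice, and both the polarity $\rho$ and the order-determining set $E(x,y)=\{x\wedge y=x\}$ are left unchanged. Hence $\class R$ itself satisfies the hypothesis of Corollary \ref{corollary:extensive}, so $\mathcal I\colon \class R\to \class{IR}$ is an equivalence. Equivalently, one may first establish the equivalence for the full variety and then invoke the preceding corollary on subquasivarieties.

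The only genuine verification --- and the step I expect to demand the most care --- is confirming the monotonicity polarity of the non-lattice operations in each class, since Corollary \ref{corollary:extensive} presupposes that the order is a $\rho$-partial order. For residuated lattices this is exactly the defining residuation $x\cdot y\leq z \iff y\leq x\backslash z$, which forces the stated polarities of the product and its residuals; for Heyting algebras it specializes to the familiar antitone/monotone behaviour of $\to$; and for modal algebras it is the monotonicity of the normal operator $\Box$. Once these elementary order-theoretic facts are recorded (together with boundedness, which holds by definition in each of these logically motivated classes), the corollary follows with no further computation.
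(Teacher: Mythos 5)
Your proposal is correct and takes essentially the same route the paper intends: the paper states this corollary without a written proof precisely because it follows by combining Corollary~\ref{corollary:extensive} (each listed variety has a lattice reduct, with the polarities you verify) with the immediately preceding corollary on restricting a categorical equivalence to subquasivarieties. Your explicit check of the polarities of the residuals and the modal operator, and of the downward inheritance of the lattice reduct, merely fills in the details the paper leaves tacit.
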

 
\begin{remark}
\label{remark:salvagente}
Corollary  \ref{corollary:extensive}
does not apply directly to MV-algebras, 
since the lattice structure of an  MV-algebra is not 
a reduct of the MV-structure, but, rather, it is  term-definable from the
basic operations  $\neg,\oplus,\odot$. 
However, by \eqref{equation:great}, the term $t(y,z)=y\oplus(\iii\odot z\odot\neg y)$
does satisfy condition (iv) of Theorem~\ref{Theo:EquivConditions}. 
In this way,  Theorem \ref{theorem:equivalence} can be seen as a consequence of 
Theorem~\ref{Theo:EquivConditions}(iv)$\to$(i).
The IMV-term
 $t(y,z)=(\iii \boldsymbol{\vee} y)\boldsymbol{\wedge} z$, 
(where $ \boldsymbol{\vee}$ and $\boldsymbol{\wedge} $
are as defined in 
\eqref{equation:central-order})
 satisfies condition (iv) of Theorem~\ref{Theo:EquivConditions}.
This is a  very special feature of MV-algebras, depending
on the actual definition of  $\boldsymbol{\vee}, \boldsymbol{\wedge}$,
as well as on the identity
 $\neg \iii=\iii$ being satisfied by all IMV-algebras. 
\end{remark}

 To find an example of a quasivariety
  for which $\mathcal{I}$ does not determine a categorical equivalence,
   we need to leave the domain of lattice ordered structures. 
The following example exhibits a  class of algebras whose underlying
 order is determined by the equation 
 $a\to b=1$,  but where the interval functor 
  $\mathcal{I}$ is not a categorical equivalence:

\begin{example}
Following \cite{Diego},   a  {\it Hilbert algebra} is an algebra $(A,\to,1)$ satisfying the equations $a\to(b\to a)=1$, \, $(a\to (b\to c))\to((a\to b)\to (a\to c))=1$ and the quasiequation $a\to b=1,b\to a \Rightarrow a=b$.
   Hilbert algebras are  the algebraic equivalent semantics
 of  the implicational fragment of intuitionistic logic. A {\it bounded}
  Hilbert algebra is a structure $(A,\to,0,1)$  such that $(A,\to,1)$ is a Hilbert algebra and   $0\to a=1$ for each $a\in A$. 
  
  Let $\class{BH}$ denote the variety of bounded Hilbert algebras. 

Every Hilbert algebra $A$ admits a natural order
 defined by $a\leq b$ if $a\to b=1$. With this order, $A$  satisfies $0\leq a\leq 1$. Since $\to$ reverses the order in the first coordinate and preserves the order in the second coordinate, upon 
 defining $\rho(\to)=(-,+)$, each bounded  Hilbert algebra
 becomes  a bounded $\rho$-poalgebra.
Let 
$\,\,\,B=(\{0,\,a,\,1\},\,\to~,\,0,\,1)$ be the unique $3$-element bounded Hilbert algebra.
 It is easy to prove that 
$D=\{[0,1],[0,0],[a,a],[1,1],[a,1]\}$ is the universe of  a  subalgebra of $\mathcal{I}(B)$.
By Theorem~\ref{Theo:EquivConditions}(iii),  the functor
 $\mathcal{I}\colon\class{BH}\to \class{IBH}$ is not a categorical equivalence.
\end{example}

%%%%%%%%%%%%%
%%%%%%%%%%%%%
\section{Related work}
\label{section:literature}

Corollary \ref{corollary:extensive} shows that
the interval functor of most
quasivarieties  $\class{Q}$  
of partially ordered algebras existing in the literature
 is in fact a categorical equivalence. Thus, intuitively,
 $\class{Q}$  and the quasivariety of its interval algebras
 $\mathcal I(\class{Q})$ stand in the same relation as
  MV-algebras
and IMV-algebras:  the functor
$\mathcal I$ preserves subalgebras, homomorphic images,
products, coproducts, projectives, injectives.

Remarkably enough, as the following brief
survey will show,
  the pervasiveness of  this categorical equivalence 
has gone virtually unnoticed in the vast literature on
interval algebras, triangle algebras, interval constructors
and  triangularizations of algebras whose underlying
order is a lattice.

\subsection*{Interval analysis, Minkowski sums}
From the very outset,
the basic operations in  {\it  interval analysis} include 
Minkowski sum,  \cite[(2.15)]{interval}.  
By contrast, in the literature on interval algebras,
interval constructors, triangularizations, interval t-norms,
  \cite{information, 35years, cina,  gehwal,   van,  van-bis,   web},    
every (monotone) binary operation   $\star$ on the set of intervals 
of a partially ordered algebra  $A$, possibly equipped with  lattice
and/or   t-norm operations,  is 
usually defined by
$[\alpha,\beta]\star [\gamma,\delta]=[\alpha\star\gamma,
\beta\star\delta],$  as we have done
in Section~\ref{section:general}.  This is so because the set
$\{\xi\star\chi\mid \xi\in[\alpha,\beta] ,\chi\in[\gamma,\delta] \}$
need not be an interval of $A$---whenever the counterpart of Proposition
\ref{proposition:riesz} does not hold  for 
$A$.
Proposition \ref{proposition:riesz} holds for any
MV-algebra $A$ because by  \cite[\S 3]{mun86},
 (up to isomorphism),
$A$
 is the unit interval of a unique 
  unital $\ell$-group $(G,u)$,  and  $G$, like the ordered
group of real numbers considered in interval analysis, 
 has the Riesz decomposition
property, \cite[Lemma~1, page 310, and Theorem~49, p. 328]{bir}.

Other basic operations in interval analysis
include  $\underline x,\overline x$,
which are also  found in interval algebra theory, and
correspond to our   $\Delta x,\nabla x$,
to   $\nu x, \mu x$ of 
 \cite{van-bis}, to  $l$, $r$, or $\pi_1,\pi_2$
 of  \cite{information}.
An important property of any
 IMV-algebra $J$ is Proposition \ref{proposition:trivial}(iii),
 stating that  
 \begin{quote} 
 {\small by {\it equation}  \eqref{equation:great}, any
$x\in J$  is uniquely determined 
by $\Delta x$ and $\nabla x$.}
\end{quote}
Mutatis mutandis, for certain classes $\class{K}$ of algebras,
(e.g., the ``triangle algebras'' of \cite[Definition~3]{van-bis}),
the property above turns out to be
definable by equations, thus allowing the equational
definability of the interval algebras of $\class{K}.$
For every quasivariety  $\class{Q}$ considered
in  Corollary  \ref{corollary:extensive}, the
existence of a term $t(\Delta x,\nabla x)$ 
equal to  $x$ yields a categorical equivalence between
$\class{Q}$ and  its
associated category $\mathcal I(\class{Q})$ of interval algebras.

\subsection*{Monadic, modal, Girard algebras}
The operations  $\Delta,\nabla$,
sometimes written as
 $\exists, \forall$, or 
$\Diamond,\Box$,
  also occur in the realm of
{\it monadic algebras}, to express some
properties of quantifiers.
Thus for instance, the paper \cite{dingri} represents  
every monadic MV-algebra
as an algebra of suitable  (generally non-monotone) pairs of MV-algebras,
equipped with unary operators that are reminiscent of our
$\Delta$ and $\nabla$.  However,  
monadic MV-algebras are not (term-equivalent to) IMV-algebras.
A particular case of monadic MV-algebras is given by
monadic boolean algebras: a celebrated theorem by A.Monteiro
\cite{mon} shows that   each
  monadic boolean algebra $A$ yields  a three-valued \luk\ algebra
  (i.e., an MV$_3$-algebra, 
  as defined by Grigolia,
 see \cite[\S 8.5]{cigdotmun} and \cite{cig})
$L_A$,
in such a way that, up to isomorphism,  each 
three-valued \luk\ algebra
arises has the form $L_A$
for some monadic boolean algebra $A$.
And again,  for any boolean algebra $B$,
$\mathcal I(B)$   will not be
an MV$_3$ algebra, because of Proposition 
\ref{proposition:trivial}(iv).

The operations
   $\Delta,\nabla$   are also found in the theory of {\it modal algebras}
    to express
 the notions of ``necessity'' and ``possibility''.
 In the specific domain of MV-algebras, 
modal operators  are studied in 
 \cite{modal-mv}  as a special case
of general modal operators on residuated lattices
 \cite{bou}.
However, the IMV-operations
$\Delta,\nabla$  
  do not satisfy the same
equations of the   modal operators 
of  \cite{modal-mv} and  \cite{bou}. 
Further examples  are mentioned   in
\cite[Section~3.3]{van-bis}.

In the paper  \cite{web},   the set
of intervals in  an MV-algebra  $A$ 
is equipped with the structure
of a ``Girard algebra''  $\mathcal G(A)$. 
The main aim of the paper is
to represent
 conditionals in many-valued logic.  By 
   \cite[Theorem~2.2]{web}, and
  Proposition 
\ref{proposition:trivial}(iv),  no interval algebra
arising from the constructions of \cite{web}
can be an  IMV-algebra.
 The paper \cite{chakue} deals with  ``interval MV-algebras'',
having no relations to IMV-algebras. 
The main result of  \cite{chakue}  is
  that every interval  $[\alpha,\beta]$ in an MV-algebra
$A$ can be equipped with the structure of an MV-algebra.
Again by  Proposition~\ref{proposition:trivial}(iv), no ``interval MV-algebra'' in the
sense of \cite{chakue}  can be the subreduct of an  IMV-algebra.
\subsection*{Interval t-norms on   lattices, orders, logics}
The book chapter  in \cite{35years} 
entitled
``Interval-Valued Algebras and Fuzzy Logics''
investigates  ``triangularization'' as an operator
from
 bounded lattices into  bounded lattices. 
 Also see  \cite{van-bis}.   The action   of 
 triangularization on bounded lattices homomorphisms
  is not considered. 
In the same paper,   triangularization 
is applied to classes of residuated lattices.
Triangularization operators
of general lattice-ordered  structures are also considered
in \cite{bedbed}, using a different terminology.
  ``Interval {\it functors}''  from t-normed bounded
  lattices into t-norm bounded lattices
   are  explicitly considered, e.g., 
  in  
\cite[Subsection 6.1]{bedbed}. In
  \cite{natal-LNCS} the authors
define a category whose objects are t-norms  on arbitrary
bounded lattices and whose  morphisms are suitable
generalizations of automorphisms.
We refer to
\cite{natal-LNCS,
information,
 cina,  [21]},  
 and to  the bibliography of
 \cite{35years}    for an account of
interval  t-norms on classes  of
bounded lattices. 
Similarly as in IMV-algebras, the
  product and the inclusion order
are naturally definable in most  interval
algebras arising from triangularizations,
   \cite{gehwal,  cina, web, information}. 
  One can  find in the literature  
  various interval-valued logics,   \cite{information, 35years, van-bis}.
For instance, the semantics of the
logic considered in  
\cite[Section 4.2]{35years} 
   is in terms of  ``triangle algebras'', and 
   satisfies a soundness and
   completeness theorem.  For any of these logic systems one
   can naturally ask about the algorithmic  complexity of
   the tautology/consequence problem.

\bibliographystyle{plain}

\end{document}